\newcommand{\rounddown}[1]{\lfloor{#1}\rfloor}
\newcommand{\E}{\ensuremath{\mathcal{E}}}
\newcommand{\M}{\ensuremath{\mathcal{M}}}
\newcommand{\CC}{\ensuremath{\mathbb{C}}}
\newcommand{\FF}{\ensuremath{\mathbb{F}}}
\newcommand{\NN}{\ensuremath{\mathbb{N}}}
\newcommand{\PP}{\ensuremath{\mathbb{P}}}
\newcommand{\QQ}{\ensuremath{\mathbb{Q}}}
\newcommand{\RR}{\ensuremath{\mathbb{R}}}
\newcommand{\ZZ}{\ensuremath{\mathbb{Z}}}
\newcommand{\s}{\ensuremath{\mathfrak{s}}}
\def\hol{{\mathcal{O}}}
\DeclareMathOperator{\Aut}{Aut}
\DeclareMathOperator{\Ext}{Ext}
\DeclareMathOperator{\Hom}{Hom}
\DeclareMathOperator{\Sym}{Sym}
\newtheorem{thm}{Theorem}[section]
\newtheorem{lem}[thm]{Lemma}
\newtheorem{cor}[thm]{Corollary}
\newtheorem{prop}[thm]{Proposition}
\newtheorem{conj}[thm]{Conjecture}
\theoremstyle{definition}
\newtheorem{df}[thm]{Definition}
\theoremstyle{remark}
\newtheorem{remark}[thm]{Remark}
\newtheorem{example}[thm]{Example}
\numberwithin{equation}{section}
\begin{document}

\title[Moduli spaces of threefolds on the Noether line]{Moduli spaces of threefolds \\ on the Noether line}

\author{Stephen Coughlan}
\author{Yong Hu}
\author{Roberto Pignatelli}
\author{Tong Zhang}
\date{\today}
	
\address[S.C.]{Department of Mathematics and Computer Studies, Mary Immaculate College, South Circular Road, Limerick (Ireland)}
\email{stephen.coughlan@mic.ul.ie}

\address[Y.H.]{School of Mathematical Sciences, Shanghai Jiao Tong University, 800 Dongchuan Road, Shanghai (China)}
\email{yonghu@sjtu.edu.cn}

\address[R.P.]{Dipartimento di Matematica dell'Universit\`a di Trento, via Sommarive 14, I-38123, Trento (Italy)}
\email{Roberto.Pignatelli@unitn.it}

\address[T.Z.]{School of Mathematical Sciences, Ministry of Education Key Laboratory of Mathematics and Engineering Applications \& Shanghai Key Laboratory of PMMP, East China Normal University, Shanghai (China)}
\email{tzhang@math.ecnu.edu.cn, mathtzhang@gmail.com}

\begin{abstract}
    In this paper, we study the moduli spaces of canonical threefolds with any prescribed geometric genus $p_g \ge 5$ which have the smallest possible canonical volume. This minimal volume is equal to the smallest half-integer that is larger than or equal to $\frac43 p_g -\frac{10}3$, and the threefolds in question are said to lie on the (refined) Noether line. For every such moduli space, we establish an explicit stratification, compute the dimension of all strata, and estimate the number of its irreducible components. Thus it yields a complete classification of threefolds on the (refined) Noether line. A new and unexpected phenomenon is that the number of irreducible components of the moduli space grows linearly with $p_g$, while the moduli space of canonical surfaces on the Noether line with any prescribed geometric genus has at most two irreducible components.
    
    The key idea in the proof is to relate these canonical threefolds $X$ to simple fibrations in $(1, 2)$-surfaces. In turn, this depends on the observation that a general member in $|K_X|$ is a canonical surface on the Noether line.
\end{abstract}

\subjclass{Primary 14J30; Secondary 14J29, 14J10}
	
\maketitle

\setcounter{tocdepth}{1}
\tableofcontents

\section{Introduction} 

\subsection{Background} 

One of the most fundamental problems in algebraic geometry is to classify algebraic varieties, with probably the ultimate goal to understand the moduli space of varieties with prescribed discrete numerical invariants. As a typical example, the moduli spaces $\M_g$ of smooth curves of genus $g \ge 2$ have been extensively studied since the seminal work of Mumford. In the moduli theory for higher dimensional varieties of general type, the main objects are varieties with ample canonical class and canonical singularities \cite[\S 1.2]{Kollar}. Geometric invariant theory (GIT) can be applied to construct a quasi-projective coarse moduli space of such varieties \cite{Viehweg} (see also \cite{Gieseker} for surfaces). An alternative construction using the minimal model program (MMP) was outlined for surfaces in \cite{KSB} (see also \cite{Alexeev}), and it gives a projective moduli space by adding stable varieties (see \cite{Kollar} for details including the higher dimensional case). 
However, the geometry of these moduli spaces is still far from being understood, even without considering the locus parameterizing strictly stable varieties. The basic questions include, for example:
\begin{itemize}
	\item the non-emptiness of the moduli space of varieties of general type with prescribed birational invariants;
	
	\item the dimension and the number of irreducible/connected components of the moduli space, if it is non-empty.
\end{itemize}

In this paper, we describe the explicit geometry of moduli spaces of a class of threefolds with ample canonical class, which are of special importance from the viewpoint of the geography of algebraic varieties. To motivate our result, in the following, we assume that $X$ is a variety of general type of dimension $n \ge 2$ with at worst canonical singularities. If the canonical class $K_X$ is ample, then $X$ is called {\em canonical}. Let 
$$
\mathrm{Vol}(X) := \limsup\limits_{m \to \infty} \frac{h^0(X, mK_X)}{m^n/n!}
$$
denote the \emph{canonical volume} of $X$, and let 
$$
p_g(X) := h^0(X, K_X)
$$ 
denote its \emph{geometric genus}. These two numerical invariants are fundamental in the study of the birational geometry of $X$. Note that if $K_X$ is nef, then $\mathrm{Vol}(X) = K_X^n$. 

When $n = 2$, the famous inequality due to M. Noether \cite{Noether} states that
$$
\mathrm{Vol}(X) \ge 2p_g(X) - 4
$$
for every surface $X$ of general type. Surfaces satisfying the above equality are usually said to be on the Noether line, and the study of such surfaces dates back to the work of Enriques \cite{Enriques}. They are also known as Horikawa surfaces since in his celebrated paper \cite{Horikawa1}, Horikawa completely described for each possible $p_g \ge 3$ the moduli space parameterizing canonical surfaces on the Noether line. More precisely, he showed that the moduli space is either irreducible and unirational, or it has two unirational irreducible components of the same dimension that do not intersect. Horikawa computed the dimension of each component as well.

When $n = 3$, the corresponding Noether inequality, conjectured around the end of the last century, is now proved. More precisely, Chen et al. proved in \cite{CCJ,CCJAdd,CHJ} that the inequality 
\begin{equation} \label{eq: Noether1}
    \mathrm{Vol}(X) \ge \frac{4}{3} p_g(X) - \frac{10}{3}
\end{equation}
holds for every threefold $X$ of general type. The inequality is optimal due to known examples found by Kobayashi \cite{Kobayashi} for infinitely many but not all $p_g$. However, combining with results in \cite{HZ}, it can be refined as
\begin{equation} \label{eq: Noether}
    \mathrm{Vol}(X) \ge \frac{1}{2} \left \lceil {\frac{8p_g(X) - 20}{3}} \right \rceil \in \frac12 {\mathbb N},
\end{equation}
with the term on the right-hand side being the smallest half-integer larger than or equal to $\frac43 p_g(X) - \frac{10}3$ (see Theorem \ref{thm: Noether}). This refined inequality is optimal for every $p_g \ge 3$ due to infinitely many examples constructed in \cite{CP,HZ}, and it naturally splits into three distinct Noether inequalities, subject to the residue of $p_g$ modulo $3$.

We say that a threefold $X$ is \emph{on the refined Noether line} (see Definition \ref{def: Noether line}) if it satisfies the equality in the above \eqref{eq: Noether}. In other words, threefolds on the refined Noether line with prescribed $p_g \ge 3$ have the smallest possible canonical volume. Recently, more examples of threefolds on the (refined) Noether line have been constructed in \cite{CH,CP,CJL}, but the question of whether there is a classification of all threefolds on the (refined) Noether line (see \cite[Question 1.5]{CCJ}) has remained open until now.

\subsection{Main theorem}

The main result in this paper is an explicit description of the moduli spaces of canonical threefolds on the refined Noether line with geometric genus $p_g \ge 5$. 
It can be seen as a three dimensional version of Horikawa's result \cite{Horikawa1} and provides a complete answer to the above question when $p_g \ge 5$. We summarize it as the following two theorems.

\begin{thm} \label{thm: main1}
	For an integer $p_g \ge 13$, let $\M_{K^3,p_g}$ be the coarse moduli space parameterizing all canonical threefolds on the refined Noether line with geometric genus $p_g$. Let $N \in \{0, 1, 2\}$ such that $N \equiv p_g + 2$ $(\mathrm{mod} \, 3)$. Then 
	\begin{itemize}
		\item [(1)] $\M_{K^3,p_g}$ is a union of $\alpha_{p_g}$ unirational strata, where
        $$
        \alpha_{p_g} = 
        \begin{cases}
            \left\lfloor \frac{p_g+6}{4} \right\rfloor, & \text{ if } N = 0, 2;\\[5pt]
            \left\lfloor \frac{p_g + 8}{4} \right\rfloor, & \text{ if } N = 1.
        \end{cases}
        $$
		
		\item [(2)] The number $\nu_{p_g}$ of irreducible components is at most $\alpha_{p_g}$ and at least $\alpha_{p_g} - \beta_{p_g}$, where the value of $\beta_{p_g}$ is given in the following table.
        \begin{table}[htbp]
            \centering
            \renewcommand{\arraystretch}{1.4}
            \begin{tabular}{cccccc}
            & $N = 0$ & $N = 1$
            & $N = 2$
            \\
            \hline
            $\beta_{p_g}$ & $\left\lfloor \frac{p_g + 8}{78} \right\rfloor$ & $\left\lfloor \frac{p_g + 61}{78} \right\rfloor$ & $\left\lfloor \frac{p_g + 36}{78} \right\rfloor$
            \end{tabular}
        \end{table}
        \noindent In particular, $\nu_{p_g}$ grows linearly with $p_g$, as $p_g/4$.
		
		\item [(3)] $\M_{K^3, p_g}$ is not equidimensional, and its irreducible component of maximal dimension has dimension
		$$
		\dim \M_{K^3, p_g} = \frac{169}{3}p_g - 56 \left \lceil \frac{p_g + 2 + 2N}{12} \right \rceil + \frac{386 - 10N}{3}.
		$$
	\end{itemize}
\end{thm}

In contrast with aforementioned Horikawa's results (and rather surprisingly for us), Theorem \ref{thm: main1} (2) shows that the number of irreducible components is unbounded as $p_g$ tends to infinity. Moreover, we obtain not only the dimension of $\M_{K^3, p_g}$ as in Theorem \ref{thm: main1} (3) but also dimensions of all strata of those in Theorem \ref{thm: main1} (1) (see Propositions \ref{prop: dimension moduli}, \ref{prop: dimension moduli N=1 d=2}, \ref{prop: dimension moduli N=2} for details). 

If $5 \le p_g \le 12$, the following theorem gives a more concrete description of the corresponding moduli space of threefolds on the refined Noether line.

\begin{thm} \label{thm: main2}
    For an integer $5 \le p_g \le 12$, let $\M_{K^3,p_g}$ be the coarse moduli space parameterizing all canonical threefolds on the refined Noether line with geometric genus $p_g$. Then $\M_{K^3,p_g}$ consists of $\nu_{p_g}$ unirational irreducible components, where $\nu_{p_g}$ and the dimensions of each irreducible component are given in the following table.
    \begin{table}[htbp]
            \centering
            \begin{tabular}{ccc}
            $p_g$ & $\nu_{p_g}$ & \emph{dimensions} \\
            \hline
            $5$ & $2$ & $305,309$ \\
            $6$ & $2$ & $341,357$ \\
            $7$ & $2$ & $391,417$ \\
            $8$ & $3$ & $427,430,468$
            \end{tabular}
            \quad\quad
            \begin{tabular}{ccc}
            $p_g$ & $\nu_{p_g}$ & \emph{dimensions} \\
            \hline
            $9$ & $3$ & $463,476,520$ \\
            $10$ & $3$ & $513,536,582$ \\
            $11$ & $4$ & $549,551,585,634$ \\
            $12$ & $4$ & $585,596,636,687$
            \end{tabular}
    \end{table}
\end{thm}

The moduli spaces $\M_{K^3,p_g}$ of the canonical threefolds on the refined Noether line with $p_g = 3$, $4$ have been investigated in \cite{CHJ}. In both cases, the corresponding moduli spaces are irreducible, and a general member in the moduli has only one terminal singularity of type $\frac{1}{2}(1,1,1)$ when $p_g = 3$, and is even smooth when $p_g = 4$. However, Theorem \ref{thm: main1} and \ref{thm: main2} reveal new phenomena when $p_g \ge 5$. More precisely, the moduli space $\M_{K^3,p_g}$ of canonical threefolds on the refined Noether line with $p_g \ge 5$ is never equidimensional (thus always reducible). Consider the (unique) irreducible component of $\M_{K^3,p_g}$ with the maximal dimension. Then a general member in it has non-isolated canonical singularities of type $cE_8$ when $p_g \ge 6$ and type $cA_1$ when $p_g = 5$ (see the tables in \S \ref{section: moduli} for details). In the case $N=0$, this gives a lot of examples of non-smoothable canonical threefolds whose singularities are locally smoothable. This also differs dramatically from the surface case \cite{Horikawa1}, where a general canonical surface on the Noether line is always smooth.

\subsection{Idea of the proof}

The proof of the main theorems begins with investigating the following birational version of a conjecture stated in \cite[Introduction]{CP}.

\begin{conj} \label{conj}
    There exists an $\varepsilon > 0$ such that every canonical threefold $X$ with $K_X^3 < \frac{4}{3} p_g(X) - \frac{10}{3} + \varepsilon$ and $p_g(X) \gg 1$ birationally admits a simple fibration in $(1, 2)$-surfaces over $\PP^1$.
\end{conj}

Here and throughout this paper, a \emph{$(1, 2)$-surface} is a surface $S$ with at worst canonical singularities, $\mathrm{Vol}(S) = 1$ and $p_g(S) = 2$. A key feature of a $(1, 2)$-surface is that its canonical ring is generated by four elements of respective degree $1$, $1$, $2$ and $5$ and related by a single equation of degree $10$. Simple fibrations in $(1,2)$-surfaces were introduced and studied in \cite{CP} (see Definition \ref{def: simple fibration} for a precise definition). They are fibrations $f \colon X \to B$ from a threefold $X$ with canonical singularities to a smooth curve $B$ with $K_X$ being $f$-ample such that the canonical ring of each fibre is ``algebraically" like that of a $(1, 2)$-surface. An enlightening result proved in \cite{CP} is that every Gorenstein minimal threefold $X$ admitting a simple fibration in $(1, 2)$-surfaces over $\PP^1$ satisfies $K_X^3 = \frac{4}{3}p_g(X) - \frac{10}{3}$. Thus Conjecture \ref{conj} is a generalization of the converse of the above result. 

The first step in the proof of the main theorems is to confirm the above conjecture in an effective way.

\begin{thm} [See Corollary \ref{cor: simple fibration}] \label{thm: main3}
    Up to a crepant birational morphism, every canonical threefold on the refined Noether line with $p_g \ge 5$ admits a simple fibration in $(1, 2)$-surfaces over $\PP^1$.
\end{thm}

Combining this theorem with the results in \cite{HZ} and \cite{CHJ}, it follows that Conjecture \ref{conj} holds for $\varepsilon = \frac{1}{2}$ when $p_g \ge 11$, and for $\varepsilon = \frac{1}{30}$ when $p_g \ge 5$. Moreover, it does not hold for $p_g = 4$ since a general canonical threefold on the Noether line with $p_g = 4$ has no pencil of $(1, 2)$-surfaces, see Remark \ref{rem: pg=4}.

The original biregular version of the conjecture, the one in \cite[Introduction]{CP}, claimed the existence of a simple fibration in $(1,2)$-surfaces directly on the canonical model. This also follows by Corollary \ref{cor: simple fibration}, but for bigger $p_g$. That is, for $\varepsilon = \frac{1}{2}$ we need $p_g\ge 23$. In \S \ref{subsection: index 3}, we construct canonical threefolds of index three showing that the (birational) conjecture does not hold for any $\varepsilon > \frac23$. 

Now we explain the strategy of the proof of Theorem \ref{thm: main3}. Let $X$ be a canonical threefold on the refined Noether line with $p_g(X) \ge 5$. If $p_g(X) \ge 11$, then it has been proved in \cite{HZ} that $X$ has a minimal model $X'$ which is fibred by $(1, 2)$-surfaces over $\PP^1$. Thanks to \cite{CHJ}, such a result can be extended to the case when $p_g(X) \ge 5$ (see Theorem \ref{thm: existence of (1,2)-surface fibration}). Let $X_0$ be the relative canonical model of $X'$ over $\PP^1$. The main technical difficulty is to prove that the fibration $f_0\colon X_0 \to \PP^1$ is a simple fibration. That is, to determine the canonical ring of every fibre. To overcome this, our main discovery is that the Cartier index of $X_0$ is at most two and that a general member of $|K_{X_0}|$ is a canonical surface on the Noether line (see Theorem \ref{thm: Horikawa surface}). By Horikawa's work on the classification of fibrations by curves of genus two \cite{Horikawagenus2}, we deduce that a general member of $|K_{F_p}|$ for any fibre $F_p$ of $f_0$ is a Gorenstein integral curve of arithmetic genus two. With such a nice canonical curve, the canonical ring of $F_p$ can be computed via the method in \cite{CFPR,FPRb}.

Given Theorem \ref{thm: main3}, in the second step of the whole proof, we focus on threefolds $X$ admitting simple fibrations in $(1, 2)$-surfaces over $\PP^1$. To such a threefold $X$ we associate a triple of integers $(d, N, d_0)$. Here $N = 6K_X^3 - 8p_g(X) + 20 \ge 0$. The novelty here is to show that

\begin{thm} [See Theorem \ref{thm: X is hypersurface}] \label{thm: main4}
    If $N \le 4$, then $X$ is isomorphic to a hypersurface in a toric fourfold  uniquely determined by the triple $(d, N, d_0)$ with an explicit defining equation.
\end{thm}

Note that $X$ lying on the refined Noether line implies $N \le 2$. Thus by Theorem \ref{thm: main3}, every threefold $X$ on the refined Noether line with $p_g(X) \ge 5$ is isomorphic to a divisor in a toric fourfold, and in this case, we have $p_g(X) = 3d - 2 + N$. Moreover, as a general hypersurface, $X$ has at worst canonical singularities if and only if $\frac{1}{4}(d + N) \le d_0 \le \frac{1}{2}(3d + N)$, and $d_0$ determines the singularities on $X$ (see Proposition \ref{prop: sing-X-appendix}). Roughly speaking, the smaller $d_0$ is, the more singular $X$ is.

We note that the assumption $N \le 4$ in Theorem \ref{thm: main4} is optimal, as we give in \S \ref{subsection: N=5} an example of a simple fibration in $(1, 2)$-surfaces over $\PP^1$ with $N = 5$ that is not isomorphic to a hypersurface in any toric fourfold of those considered in Theorem \ref{thm: X is hypersurface}.

In the final step of the entire proof, we study the modular family $\M^N_d(d_0)$ of hypersurfaces $X(d, N; d_0)$ in $\FF(d, N; d_0)$ with the desired degree for $N \le 2$. By Theorem \ref{thm: main4}, every $\M^N_d(d_0)$ maps (finite-to-one) to $\M_{K^3, p_g}$, the moduli space of canonical threefolds on the refined Noether line with $p_g = 3d - 2 + N \ge 5$, and the images $V_d^N(d_0)$ of $\M^N_d(d_0)$ give rise to a stratification of $\M_{K^3, p_g}$. Based on the explicit equation of $X(d, N; d_0)$, we manage to compute all dimensions of $\M^N_d(d_0)$, thus $V_d^N(d_0)$. Together with some deformation technique in \cite{Pignatelli}, we are able to show that every $V_d^N(d_0)$ is contained in the closure of $V^N_d\left( \left\lfloor \frac32 d + \frac N2 \right\rfloor \right)$ when $N = 0$ and $d_0 \ge d$ or when $N > 0$ and $d_0 \ge d + 1$. This gives one irreducible component of $\M_{K^3, p_g}$. On the other hand, for most $d_0 \le d$, the closure of $V_d^N(d_0)$ forms an irreducible component of $\M_{K^3, p_g}$ (see Theorem \ref{thm: moduli N=0}, \ref{thm: moduli N=1} and \ref{thm: moduli N=2} for details). Thus the number of irreducible components of $\M_{K^3, p_g}$ grows as $d$ (thus $p_g$) grows.

We summarize the geometric consequences of our classification. Suppose that $X$ is a canonical threefold on the refined Noether line, general in its stratum of the moduli space. We assume that $X$ is not one of the finite and small number of cases with $p_g \le 22$, for which we usually need a crepant blowup to realize the simple fibration. Then $X$ has the following properties:

\begin{enumerate}
	\item $X$ admits a simple fibration $f\colon X\to \PP^1$ in $(1,2)$-surfaces, and the $f$-relative canonical model of $X$ is isomorphic to $X$ itself.
	
	\item $X$ has $N$ singularities of type $\frac12(1,1,1)$ and possibly Gorenstein canonical singularities along a section of $f$. Thus $X$ is Gorenstein if $N=0$, and $2$-Gorenstein otherwise.
	
	\item The canonical map of $\varphi_{K}\colon X\dashrightarrow \Sigma$ is a rational map whose image is a Hirzebruch surface. The simple fibration is induced by the composition of $\varphi_K$ with the natural projection to $\PP^1$. The indeterminacy locus of $\varphi_K$ is a section $\sigma$ of $f$. For $p$ in $\PP^1$, the corresponding point $\sigma(p)$ is the basepoint of $|K_{F_p}|$, where $F_p := f^*p$.
	
	\item The bicanonical map $\varphi_{2K}\colon X\to\mathcal{Q}$ is a 2-to-1 morphism to a (toric) $\PP(1,1,2)$-bundle $\mathcal{Q}$ over $\PP^1$, branched along a surface of relative degree $10$ and the section of vertices $\sigma_{(2)}$. That is, $\sigma_{(2)}(p)$ is the point $(0:0:1)$ in $\mathcal{Q}_p \cong\PP(1,1,2)$. The branch surface intersects $\sigma_{(2)}$ in $N$ points.
	
	\item The general canonical surface section $S$ in $|K_X|$ is a Horikawa surface with canonical singularities. If $N=0$, then $K_S$ is $2$-divisible as a line bundle and $S$ is an even Horikawa surface.
\end{enumerate}

\subsection{Structure of the paper}
The paper is structured as follows.

In Section \ref{section: Noether line}, we recall the Noether and the refined Noether inequality for threefolds of general type obtained in \cite{CCJ,CCJAdd,CHJ,HZ}. The key result here is that every canonical threefold on the refined Noether line birationally admits a fibration in $(1, 2)$-surfaces over $\PP^1$, whose proof is in Appendix \ref{section: A1}.

Section \ref{section: fibration} is devoted to the study of threefolds fibred by $(1, 2)$-surfaces. The main result in this section is Theorem \ref{thm: Horikawa surface}, showing that a general canonical divisor is in fact a canonical surface on the Noether line.

In Section \ref{section: simple fibration}, we apply Theorem \ref{thm: Horikawa surface} to prove Theorem \ref{thm: main3}, verifying the aforementioned Conjecture \ref{conj}.

In Section \ref{section: toric 4fold}, we prove Theorem \ref{thm: main4}. Moreover, in Proposition \ref{prop: sing-X} (cf.~Appendix \ref{appendix: sing}) we also give an explicit description of singularities on threefolds $X$ admitting simple fibrations in $(1, 2)$-surfaces. 

In Section \ref{section: moduli}, we apply the results in Section \ref{section: toric 4fold} to study the stratification of the moduli space of canonical threefolds on the refined Noether line, obtaining  Theorems \ref{thm: main1} and \ref{thm: main2}. 

In Section \ref{section: example}, we provide more examples of fibrations in $(1, 2)$-surfaces that complement the main results. \S \ref{subsection: K non-nef} contains the classification of the simple fibrations in $(1, 2)$-surfaces over $\PP^1$ whose canonical class is not nef. This gives sporadic interesting examples of canonical threefolds with small volume and small genus, that are not in the moduli spaces described by Theorems \ref{thm: main1} and \ref{thm: main2}. In \S \ref{subsection: N=5}, we construct a simple fibration in $(1, 2)$-surfaces over $\PP^1$ which is not a hypersurface in a toric fourfold as in Theorem \ref{thm: main4}. Similar constructions are known for each $N \ge 5$. In \S \ref{subsection: index 3}, we produce canonical threefolds with arbitrarily high genus $p_g$ and canonical volume $\frac43 p_g -\frac83$, that have no simple fibration in $(1, 2)$-surfaces. They all have index three.

Finally, the appendices. Appendix \ref{section: A1} contains the proof that every canonical threefold on the refined Noether line birationally admits a fibration in $(1, 2)$-surfaces over $\PP^1$. 
Appendix \ref{appendix: sing} classifies the singularities occurring on simple fibrations in $(1,2)$-surfaces.

\subsection{Notation}
Throughout this paper, we work over the complex number field $\CC$, and all varieties are projective. 
\begin{itemize}
	\item A variety $X$ is {\em minimal} if it has at worst $\QQ$-factorial terminal singularities and $K_X$ is nef.
	
	\item For a normal variety $X$, if $p_g(X) \ge 2$, then the global sections of the canonical class induce a rational map, called the {\em canonical map}, from $X$ to $\PP^{p_g(X)-1}$. The closure of the image of $X$ under its canonical map is called the {\em canonical image} of $X$, whose dimension is called the \emph{canonical dimension} of $X$.

    \item Given two variables $t_0, t_1$, we denote by $S^n(t_0,t_1)$ the set of homogeneous polynomials of degree $n$ in the variables $t_0, t_1$. 
\end{itemize}

\subsection*{Acknowledgements} We would like to thank Jungkai Chen and Meng Chen for their interest in this problem.

Y.H. was supported by National Key Research and Development Program of China \# 2023YFA1010600 and the National Natural Science Foundation of China (Grant No. 12571044). R.P. was partially supported by the ``National Group for Algebraic and Geometric Structures, and their Applications" (GNSAGA - INdAM) and by the European Union-Next Generation EU, Mission 4 Component 2 - CUP E53D23005400001. T.Z. was partially supported by the National Natural Science Foundation of China (Grant No. 12071139), the Science and Technology Commission of Shanghai Municipality (No. 22JC1400700, No. 22DZ2229014) and the Fundamental Research Funds for the Central Universities.


\section{Noether inequality and the refined Noether line} \label{section: Noether line}
In this section, we collect some known results about threefolds with small volume. 

We are interested in the moduli space of canonical threefolds. Some of the following results we use are stated in the original papers for minimal threefolds of general type, but these results extend to canonical threefolds by the obvious use of a terminalisation. Indeed, for a canonical threefold $X$, there exists a crepant birational morphism $\tau \colon \tilde{X} \to X$ such that $\tilde{X}$ is minimal by \cite{Kaw88} or \cite[Theorem 6.25]{KM}. Therefore, we reformulate these results directly here for canonical threefolds. 

We start from the Noether inequality for threefolds of general type, which is an accumulation of \cite[Theorem 1.1]{CCJ}, \cite[Theorem 1]{CCJAdd} and \cite[Theorem 1.1]{CHJ}.
\begin{thm}[Noether inequality] \label{thm: Noether1}
	Let $X$ be a canonical threefold. Then the inequality \eqref{eq: Noether1}
	\[
	K_X^3 \ge \frac{4}{3}p_g(X) - \frac{10}{3}
	\]
    holds.
\end{thm}
The inequality \eqref{eq: Noether1} is indeed optimal for infinitely many $p_g$ (see \cite{Kobayashi,CH,CP,CJL,HZ} for examples for which the inequality becomes an equality). However, it is shown in \cite[Theorem 1.2]{HZ} that if the equality in \eqref{eq: Noether1} holds, then $p_g \equiv 1$ $(\mathrm{mod}\, 3)$. It turns out that, combining with results in \cite{HZ}, we actually have the following refined Noether inequality.

\begin{thm}[Refined Noether inequality] \label{thm: Noether}
	Let $X$ be a canonical threefold. Then the inequality \eqref{eq: Noether}
    \[
	K_X^3 \ge \frac{1}{2} \left \lceil {\frac{8p_g(X) - 20}{3}} \right \rceil
    \]
    holds.
\end{thm}

\begin{proof}
    To prove this inequality, we may assume that $p_g(X) \ge 3$. When $p_g(X) \le 4$, the inequality follows from \cite[Theorem 1.5]{Chen}. When $p_g(X) \ge 5$, by \cite[Theorem 2.4]{Kobayashi}, \cite[Theorem 4.4 and 4.5]{CCJ} and \cite[Theorem 4.6]{CHJ}, we only need to treat the case when the canonical image $\Sigma$ of $X$ is a surface. In this case, by  Lemma \ref{lem: p_g5} and \cite[Proposition 2.1]{HZ}, we may further assume that $X$ admits a fibration over $\PP^1$ with general fibre a $(1,2)$-surface. Then we are under the setting of \cite[\S 3]{HZ}, and the inequality follows from \cite[Proposition 3.5(2)]{HZ} (note that we have $d \geq \deg\Sigma \geq p_g(X)-2$).
\end{proof}

Equivalently, as is stated in Theorem \ref{thm: Noether}, suppose that $X$ is a canonical threefold.
\begin{itemize}
    \item [(1)] If $p_g \equiv 1$ $(\mathrm{mod}\, 3)$, then $K_X^3 \ge \frac{4}{3}p_g(X) - \frac{10}{3}$;

    \item [(2)] If $p_g \equiv 2$ $(\mathrm{mod}\, 3)$, then $K_X^3 \ge \frac{4}{3}p_g(X) - \frac{19}{6}$;

    \item [(3)] If $p_g \equiv 0$ $(\mathrm{mod}\, 3)$, then $K_X^3 \ge \frac{4}{3}p_g(X) - 3$.
\end{itemize}
The key difference from \eqref{eq: Noether1} is that, by the examples constructed in \cite{CP,HZ}, the refined Noether inequality \eqref{eq: Noether} is optimal for all $p_g \ge 3$. 

\begin{df} \label{def: Noether line}
	For a canonical threefold $X$ with $p_g(X) \ge 3$, we say that it is \emph{on the refined Noether line}, if 
    $$
    K_X^3 = \frac{1}{2} \left \lceil {\frac{8p_g(X) - 20}{3}} \right \rceil.
    $$
\end{df}

Clearly, the above equality means three distinct equalities subject to the residue of $p_g$ modulo $3$, which in turn give rise to three distinct Noether lines (they are called the first, second and third Noether lines in \cite{HZ}). However, in the current paper, we will use the above equality to unify the three lines as one ``refined line", just because it works for all $p_g \ge 3$ and involves less notation.

As is discovered in \cite{HZ} for $p_g \ge 11$ as well as in \cite{CHJ} for $5 \le p_g \le 10$, canonical threefolds on the refined Noether line with $p_g \ge 5$ satisfy the following geometric property.

\begin{thm} \label{thm: fibration exsits}
    Let $X$ be a canonical threefold on the refined Noether line with $p_g(X) \ge 5$. Then the canonical dimension of $X$ is two. Moreover, it has a birational minimal model $X_1$ such that $X_1$ admits a fibration over $\PP^1$ whose general fibre is a smooth $(1, 2)$-surface.
\end{thm}

\begin{proof}
    See Theorem \ref{thm: existence of (1,2)-surface fibration} for the proof.
\end{proof}

As we will see in the sequel, the structure of the fibration in $(1, 2)$-surfaces completely determines the geometry of the threefolds on the refined Noether line.

We remark that the assumption that $p_g \ge 5$ in Theorem \ref{thm: fibration exsits} is also optimal. In fact, by \cite[Theorem 1.5]{CHJ}, a general canonical threefold on the Noether line with $p_g=4$ is a double cover over $\PP^3$. In particular, it has canonical dimension three and has no pencils of $(1, 2)$-surfaces. Meanwhile, by \cite[Example 3.1]{CH2} and \cite[Theorem 1.6]{CHJ}, a general canonical threefold on the refined Noether line with $p_g = 3$ does not have pencils of $(1, 2)$-surfaces, either.


\section{Threefolds fibred by \texorpdfstring{$(1,2)$}{(1, 2)}-surfaces with small volume} \label{section: fibration}

In this section, we always assume that $X$ is a minimal threefold of general type with $p_g(X) \ge 5$ such that 
\begin{itemize}
	\item [(1)] the canonical dimension is two;
	\item [(2)] $X$ admits a fibration $f\colon X \to \PP^1$ with general fibre $F$ a $(1, 2)$-surface.
\end{itemize}

\subsection{General setting}\label{setting}
In this subsection, we study the canonical map of $X$. We first recall some results in \cite[\S 3]{HZ} and refer the interested reader to loc.~cit.~for more details.

Let $\phi_{K_X}\colon X \dashrightarrow \PP^{p_g(X) - 1}$ be the canonical map of $X$ whose image is a surface $\Sigma$. As in \cite[\S 3.1]{HZ}, we may take a birational modification $\pi\colon X' \to X$ such that $\pi$ is an isomorphism over the smooth locus of $X$ and that $|M| = \mathrm{Mov}|\pi^*K_X|$ is base point free. Write
$$
\pi^*K_X = M + Z,
$$
where $Z \ge 0$ is a $\mathbb{Q}$-divisor. Then we have the following commutative diagram
$$
\xymatrix{
	& & X' \ar[d]_{\pi} \ar[lld]_{f'} \ar[rr]^{\psi} \ar[drr]^{\phi_{M}} & &  \Sigma' \ar[d]^{\tau}  \\
	\PP^1 & & X  \ar@{-->}[rr]_{\phi_{K_X}} \ar[ll]^f  & & \Sigma
}
$$
where $\phi_M$ is the morphism induced by $|M|$, $X' \stackrel {\psi}\rightarrow \Sigma' \stackrel{\tau} \rightarrow \Sigma$ is the Stein factorization of $\phi_M$, and $f' = f \circ \pi$ is the induced fibration. Denote by $F'$ a general fibre of $f'$. Furthermore, since $X$ has at worst terminal singularities, we may write
$$
K_{X'} = \pi^*K_X + E_\pi,
$$
where $E_\pi \ge 0$ is a $\pi$-exceptional $\mathbb{Q}$-divisor.

Take a general member $S\in |M|$. By Bertini's theorem, $S$ is a smooth surface of general type.  Let $C$ be a general fibre of $\psi$. By \cite[Lemma 3.1]{HZ}, $C$ is a smooth curve of genus $2$. We have
$$
M|_S\equiv dC,
$$
where $d = (\deg\tau) \cdot (\deg\Sigma) \ge p_g(X) - 2$. As in \cite[(3.3)]{HZ}, we may write
\begin{equation} \label{eq: E|_SZ|_S}
	E_\pi|_S = \Gamma_S + E_V, \quad Z|_S = \Gamma_S + Z_V,
\end{equation}
where $\Gamma_S$ is a section of the fibration $\psi|_S\colon S\to \PP^1$, $E_V$ and $Z_V$ are effective divisors which are vertical with respect to $\psi|_S$. By the adjunction formula, we have
\begin{align}\label{eq: adjunction}
	K_S = (K_{X'} + S)|_S = (2M + E_\pi + Z)|_S \equiv 2dC + 2\Gamma_S + E_V + Z_V.
\end{align}

Denote by $\sigma\colon S \to S_0$ the contraction onto the minimal model of $S$. By the proof in \cite[Proposition 3.5]{HZ}, the fibration $\psi|_S \colon S\to \PP^1$ descends to a fibration $S_0\to \PP^1$. Let $C_0=\sigma_*(C)$ and $\Gamma_{S_0}=\sigma_*(\Gamma_S)$. Then $g(C_0) = 2$ and $\Gamma_{S_0}$ is a section of the fibration $S_0 \to \PP^1$. By \eqref{eq: adjunction}, we have
\begin{align}\label{eq: K_S0}
	K_{S_0}\equiv 2dC_0+2\Gamma_{S_0}+\sigma_*(E_V+Z_V).
\end{align}
As in \cite[(3.5)]{HZ}, we may write
\begin{align}\label{eq: extension}
	(\pi^*K_X)|_S \sim_{\mathbb{Q}} \frac{1}{2}\sigma^*K_{S_0} + H,
\end{align}
where $H \ge 0$ is a $\QQ$-divisor. 

The following proposition follows from the proof of \cite[Proposition 3.5]{HZ}.
\begin{prop}\label{prop: estimate on S}
	The following (in)equalities hold:
	\begin{enumerate}
		\item[(1)]  $(K_{S_0} \cdot \Gamma_{S_0}) = -2 + \frac{1}{3}\left(2d + 2 + \left(\Gamma_{S_0} \cdot \sigma_*(E_V + Z_V) \right) \right)$;
		
		\item[(2)] $K_{S_0}^2 = 4d + 2(K_{S_0} \cdot \Gamma_{S_0}) + \left(K_{S_0} \cdot \sigma_*(E_V+Z_V)\right)$;
		\item[(3)] $\left(\left(\pi^*K_X\right)|_S \cdot \sigma^*K_{S_0}\right) = 2d + (K_{S_0} \cdot \Gamma_{S_0}) + (K_{S_0} \cdot \sigma_*Z_V)$;
		\item[(4)] $K_X^3 \ge \frac{1}{2} \left(\left(\pi^*K_X\right)|_S \cdot \sigma^*K_{S_0}\right)$. 
	\end{enumerate}
\end{prop}

\begin{proof}
	The equality (1) follows is just \cite[(3.7) and (3.8)]{HZ}. The equality (2) follows from \eqref{eq: K_S0}. For (3), we have
	\begin{align*}
		\left(\left(\pi^*K_X\right)|_S \cdot \sigma^*K_{S_0}\right) & = \left((M|_S + Z|_S) \cdot \sigma^*K_{S_0}\right) \\
		& = \left((dC + \Gamma_S + Z_V) \cdot \sigma^*K_{S_0}\right)\\
		& = 2d + (K_{S_0} \cdot \Gamma_{S_0}) + (K_{S_0} \cdot \sigma_*Z_V).
	\end{align*}
	Thus the equality in (3) holds. To prove (4), note that we have
	$$
	K_X^3 = (\pi^*K_X)^3 \ge \left(\left(\pi^*K_X\right)|_S\right)^2 \ge \frac{1}{2} \left(\left(\pi^*K_X\right)|_S \cdot \sigma^*K_{S_0}\right),
	$$
	where the last inequality follows from \eqref{eq: extension}. The proof is completed.
\end{proof}

\subsection{Refined estimate} In this subsection, we prove two refined numerical results subject to the effective $\mathbb{Q}$-divisor $H$ in \eqref{eq: extension}.

Take a general linear pencil $\Lambda$ of $\mathrm{Mov}|K_X|$. Since $q(X) = 0$ (see \cite[Lemma 3.4]{HZ} for example), applying \cite[Proposition 3.1]{CHJ} to $\Lambda$, we get a birational morphism $\mu: W \to X$ with a fibration $g: W \to \PP^1$ such that $W$ is $\mathbb{Q}$-factorial terminal and 
\begin{equation}\label{eq: G}
	G := \mu^*(K_X+S_X)-K_W-S_W
\end{equation}
is an effective $\mu$-exceptional divisor, where $S_W$ is a general fibre of $g$ and $S_X = \mu_*S_W$. Note that $G$ is independent of $S_W$ by the negativity lemma \cite[Lemma 3.39]{KM}. We may write
\begin{align}\label{eq: fixed part}
	K_X = S_X + Z_X, \quad K_W = \mu^*K_X + E_\mu, 
\end{align}
where $E_\mu \ge 0$ is a $\mu$-exceptional $\mathbb{Q}$-divisor.
Since $|K_X|$ is not composed with a pencil and $\Lambda$ is general, we deduce that $Z_X$ is just the fixed part of $|K_X|$. Note that $S_X$ is a general member in $\mathrm{Mov}|K_X|$. We may assume that $S_X = \pi_*S$. Thus $S_W$ is birational to $S$. In particular, $S_0$ is the minimal model of $S_W$. Denote by $\sigma_W: S_W\to S_0$ the contraction. 

\subsubsection{The case when $H \neq 0$} We first consider the case when $H \ne 0$. We have the following refined Noether inequality.
\begin{prop}\label{prop: Noether H>0}
	Suppose that $H \neq 0$ for the effective $\QQ$-divisor $H$ in \eqref{eq: extension}. Then the following inequality holds:
	$$
	K_X^3\ge \frac{4}{3}p_g(X)-\frac{17}{6}.
	$$
\end{prop}

To prove Proposition \ref{prop: Noether H>0}, we assume that $H \ne 0$. Note that $\left(\pi^*K_X\right)|_S$ and $\sigma^*K_{S_0}$ are nef and big divisors. By \eqref{eq: extension} and the Hodge index theorem, we have 
\begin{equation} \label{eq: KK1}
	\left((\mu^*K_X)|_{S_W} \cdot \sigma_W^*K_{S_0}\right) = \left((\pi^*K_X)|_S \cdot \sigma^*K_{S_0}\right) > \frac{1}{2} K_{S_0}^2.
\end{equation}
On the other hand, by \eqref{eq: G} and \eqref{eq: fixed part}, we have
\begin{align}
	\left((\mu^*K_X)|_{S_W} \cdot \sigma_W^*K_{S_0}\right) & = \frac{1}{2} \left((\mu^*Z_X + K_W + S_W + G)|_{S_W} \cdot \sigma_W^*K_{S_0}\right) \nonumber \\ & 
	= \frac{1}{2} \left(K_{S_W} \cdot \sigma_W^*K_{S_0}\right) + \frac{1}{2}\left((\mu^*Z_X + G)|_{S_W} \cdot \sigma_W^*K_{S_0}\right) \label{eq: KK2}\\
	& = \frac{1}{2}K_{S_0}^2 + \frac{1}{2} \left(\sigma_W^*K_{S_0} \cdot (\mu^*Z_X + G)|_{S_W}\right). \nonumber
\end{align}
Combine the above two result together, and it follows that 
$$
\left(\sigma_W^*K_{S_0} \cdot (\mu^*Z_X + G)|_{S_W}\right) > 0.
$$
Thus there is an integral curve $A\subseteq \mathrm{Supp} \left((\mu^*Z_X + G)|_{S_W}\right)$ such that $(\sigma_W^*K_{S_0} \cdot A) \ge 1$. Let $\lambda$ be the coefficient of $A$ in the effective $\QQ$-divisor $(\mu^*Z_X + G)|_{S_W}$.

\begin{lem} \label{lem: coeff of lamda}
	We have $\lambda \ge \frac{1}{3}$. As a result, we have
	$$
	\left((\pi^*K_X)|_S \cdot \sigma^*K_{S_0}\right) \ge \frac{1}{2} K_{S_0}^2 + \frac{1}{6}.
	$$
\end{lem}

\begin{proof}
	Note that the inequality on $\left((\pi^*K_X)|_S \cdot \sigma^*K_{S_0}\right)$ is a consequence of \eqref{eq: KK1}, \eqref{eq: KK2} and the fact that $\lambda \ge \frac{1}{3}$. Thus we only need to prove that $\lambda \ge \frac{1}{3}$. In the following, we assume that $\lambda < 1$.
	
	If $A$ is not contained in a $\mu$-exceptional divisor, then $A \subset (\mu^{-1}_*Z_X)|_{S_W}$. In this case, $\lambda$ must be a positive integer. Thus we may assume $A \subset E_i|_{S_W}$ for some $\mu$-exceptional prime divisor $E_i$. If $\mu(A)$ is a curve on $X$, then $\mu(E_i) = \mu(A)$ is also a curve. Since the singularities of $X$ are isolated, $X$ is smooth at a general point of $\mu(E_i)$. In this case, $\lambda$ is again a positive integer. Thus we further reduce to the case when $\mu(A)$ is a point. Then we have $\left((\mu^*K_X)|_{S_W} \cdot A\right) = 0$. 
	
	On the other hand, similar to \eqref{eq: KK2}, we have
	\begin{align*}
		\left((\mu^*K_X)|_{S_W} \cdot A \right) & =  \frac{1}{2} \left((\mu^*Z_X + K_W + S_W + G)|_{S_W} \cdot A\right) \\
		& \ge \frac{1}{2} ((K_{S_W} + \lambda A) \cdot A) \\
		& = \frac{1}{2} (1-\lambda) (K_{S_W}\cdot A) + \lambda(p_a(A) - 1) \\
		& \ge \frac{1}{2} (1-\lambda) (K_{S_W}\cdot A) - \lambda.
	\end{align*}
	Since $(\sigma_W^*K_{S_0} \cdot A) > 0$, we see that $A$ is not $\sigma_W$-exceptional. Thus we have $(K_{S_W} \cdot A)\ge (\sigma_W^*K_{S_0} \cdot A)\ge 1$. Since $\lambda < 1$, the above inequality implies that $0 \ge 1 - 3\lambda$. Thus $\lambda\ge\frac{1}{3}$. The proof is completed.
\end{proof}

Now we prove Proposition \ref{prop: Noether H>0}.
\begin{proof} [Proof of Proposition \ref{prop: Noether H>0}]
	By Lemma \ref{lem: coeff of lamda}, we have
	$$
	\left((\pi^*K_X)|_S \cdot \sigma^*K_{S_0}\right) \ge \frac{1}{2} K_{S_0}^2 + \frac{1}{6}.
	$$
	Combine this with Proposition \ref{prop: estimate on S} (2) and (3), and we deduce that
	$$
	(K_{S_0} \cdot \sigma_*Z_V) \ge \frac{1}{2} \left(K_{S_0} \cdot \sigma_*(E_V + Z_V)\right) + \frac{1}{6}.
	$$
	By \eqref{eq: E|_SZ|_S}, $E_V + Z_V = K_{X'}|_S - S|_S - 2\Gamma_S$ . Thus $E_V + Z_V \ge 0$ is a Cartier divisor on $S$. Thus the above inequality implies that $\left(K_{S_0} \cdot \sigma_*(E_V+Z_V) \right) \ge 1$, which further implies that 
	$$
	(K_{S_0} \cdot \sigma_*Z_V) \ge \frac{2}{3}.
	$$
	Now $\sigma_*(E_V+Z_V) \ne 0$. Since $K_{S_0}$ is $2$-connected, we have $\left(\Gamma_{S_0} \cdot \sigma_*(E_V+Z_V) \right) \ge 1$. Together with Proposition \ref{prop: estimate on S} (1), we deduce that
	$$
	(K_{S_0} \cdot \Gamma_{S_0}) \ge \frac{2}{3} d - 1.
	$$
	Combine the above two inequalities with Proposition \ref{prop: estimate on S} (3) and (4), and it follows that 
	$$
	K_X^3 \ge \frac{1}{2} \left((\pi^*K_X)|_S \cdot \sigma^*K_{S_0}\right) \ge \frac{4}{3} d - \frac{1}{6} \ge \frac{4}{3} p_g(X) - \frac{17}{6},
	$$
	where the last inequality follows from the fact that $d \ge p_g(X) - 2$. Thus the proof is completed.
\end{proof}

\subsubsection{The case when $H=0$} We now treat the case when $H = 0$. We have the following very explicit description.

\begin{prop}\label{prop: Noether H=0}
	Suppose that $H=0$ for the effective $\QQ$-divisor $H$ in \eqref{eq: extension}. Then the following statements hold:
	\begin{enumerate}
		\item[(1)] the canonical linear system $|K_X|$ has no fixed part, i.e., $Z_X=0$;
		\item[(2)] a general member $S_X \in |K_X|$ has at worst Du Val singularities with $K_{S_X}$ nef;
		\item[(3)] the Cartier index of $K_X$ is at most two, and
		$$
		K_X^3 = \frac{4}{3}p_g(X)-\frac{10}{3}+\frac{N}{6}
		$$ 
		for some non-negative integer $N$.
	\end{enumerate}
\end{prop}

\begin{proof}
	Since $H = 0$, by \eqref{eq: extension}, we have $2(\pi^*K_X)|_S \sim_{\mathbb{Q}} \sigma^*K_{S_0}$, which implies that $2(\mu^*K_X)|_{S_W} \sim_{\mathbb{Q}} \sigma_W^*K_{S_0}$. Together with \eqref{eq: G} and \eqref{eq: fixed part}, we deduce that
	$$
	(G+\mu^*Z_X)|_{S_W} = 2(\mu^*K_X)|_{S_W} - K_{S_W} \sim_{\mathbb{Q}} -(K_{S_W} - \sigma_W^*K_{S_0}).
	$$
	Since $G$, $\mu^*Z_X$ and $K_{S_W} - \sigma_W^*K_{S_0}$ are all effective divisors, it follows that 
	$$
	2(\mu^*K_X)|_{S_W} - K_{S_W} = G|_{S_W} = (\mu^*Z_X)|_{S_W}=0
	$$ 
	and that $S_W$ is minimal. By \cite[Lemma 3.4 and 3.5]{CHJ}, we know that $S_X$ is klt and that $Z_X = 0$. Moreover, for any non-Gorenstein singularity $P\in X$, the Cartier index of $K_X$ at $P$ is the same as the Cartier index of $K_X|_{S_X}$ at $P$.
	
	Since $S_X$ is klt and $Z_X = 0$, we have $2K_X|_{S_X} = (K_X+S_X)|_{S_X} = K_{S_X}$. Pulling back by $\mu|_{S_W}$, we have $K_{S_W} = (\mu|_{S_W})^*K_{S_X}$. Thus $S_X$ has at worst Du Val singularities, and $K_{S_X} = 2K_X|_{S_X}$ is a nef Cartier divisor. It follows that the Cartier index of $K_X$ is at most $2$. Thus $2K_X^3$ is a positive integer, and it follows by Theorem \ref{thm: Noether1} that $N := 6K_X^3 - 8p_g(X) - 20 \ge 0$ is a non-negative integer. As a result, we have
	$$
	K_X^3 = \frac{4}{3}p_g(X) - \frac{10}{3} + \frac{N}{6}.
	$$ 
	The proof is completed. 
\end{proof}

\subsection{Main result}
We first recall the associated basket $B_X$ to $X$ according to Reid. There is a Riemann--Roch formula in \cite[Corollary 10.3]{YPG} for $P_2(X) = h^0(X, 2K_X)$:
\begin{equation} \label{eq: Riemann-Roch P2}
	P_2(X) = \frac{1}{2}K_X^3 + 3\chi(\omega_X) + l_2(X).
\end{equation}
Here the correction term
\begin{equation} \label{eq: l2}
	l_2(X) = \sum_{Q} \frac{b_Q(r_Q - b_Q)}{2r_Q},
\end{equation}
where the sum $\sum_{Q}$ runs over all singularities $Q\in B_X$ with the type $\frac{1}{r_Q}(1, -1, b_Q)$ ($b_Q$ and $r_Q$ are coprime, and $0 < b_Q \le \frac{1}{2}r_Q$).

The main result in this section is the following theorem.

\begin{thm} \label{thm: Horikawa surface}
	Let $X$ be a minimal threefold of general type with $p_g(X) \ge 5$ such that 
	\begin{itemize}
		\item [(i)] the canonical dimension is two;
		\item [(ii)] $X$ admits a fibration $f: X \to \PP^1$ with general fibre $F$ a $(1, 2)$-surface;
		\item [(iii)] $K_X^3 < \frac{4}{3}p_g(X) - \frac{17}{6}$.
	\end{itemize}
	Let $f_0: X_0 \to \PP^1$ be the relative canonical model of $X$ with respect to $f$. Then we have the Noether equality
	$$
	K_{X_0}^3 = \frac{4}{3}p_g(X_0) - \frac{10}{3} + \frac{N}{6}
	$$
	for an integer $N \in \{0, 1, 2\}$. Moreover, the following statements hold:
	\begin{enumerate}
		\item[(1)] the Cartier index of $X_0$ is at most two;  
		\item[(2)] the canonical linear system $|K_{X_0}|$ has no fixed part;
		\item[(3)] a general member $S_{X_0} \in |K_{X_0}|$ has at worst Du Val singularities,  $K_{S_{X_0}}$ is nef and $f_0|_{S_{X_0}}$-ample, and $K_{S_{X_0}}^2 = 2p_g(S_{X_0}) - 4 > 10$.
	\end{enumerate} 
\end{thm}

\begin{proof}
	By \cite[Lemma 3.4]{HZ}, $h^1(X, \mathcal{O}_X) = h^2(X, \mathcal{O}_X) = 0$. Thus the Riemann--Roch formula \eqref{eq: Riemann-Roch P2} for $X$ becomes
	$$
	P_2(X) = \frac{1}{2}K_X^3 + 3\left(p_g(X) - 1\right) + l_2(X).
	$$
	
	Since $K_X^3 < \frac{4}{3}p_g(X) - \frac{17}{6}$, by Proposition \ref{prop: Noether H>0}, we know that $H = 0$ for the effective $\mathbb{Q}$-divisor in \eqref{eq: extension}. Thus by Proposition \ref{prop: Noether H=0}, $X$ satisfies the equality
	$$
	K_X^3 = \frac{4}{3}p_g(X) - \frac{10}{3} + \frac{N}{6}
	$$
	for some integer $0 \le N \leq 2$, so does $X_0$. Let $\tau: X \to X_0$ be the contraction. Then we have $K_X = \tau^*K_{X_0}$. Thus the statement (1) and (2) also follow from Proposition \ref{prop: Noether H=0} (1) and (2), respectively.
	
	To prove the statement (3), let $S_{X_0} \in |K_{X_0}|$ be a general member, and let $S_X = \tau^*S_{X_0}$. By Proposition \ref{prop: Noether H=0} (3), $S_X$ has at worst Du Val singularities, and $K_{S_X}$ is nef. Since $\tau|_{S_X}: S_X \to S_{X_0}$ is the contraction onto the relative canonical model of $S_X$ with respect to $f|_{S_X}$, we deduce that $S_{X_0}$ also has at worst Du Val singularities and that $K_{S_{X_0}}$ is nef and $f_0|_{S_{X_0}}$-ample.
	
	To show that $K_{S_{X_0}}^2 = 2p_g(S_{X_0}) - 4$, we only need to show that $K_{S_X}^2 = 2p_g(S_X) - 4$. Since $S_X$ has at worst isolated singularities, by Proposition \ref{prop: Noether H=0} (2), we have $K_{S_X} = (K_X + S_X)|_{S_X} = 2K_X|_{S_X}$. Thus $K_{S_X}^2 = 4K_X^3$. Consider the following exact sequence:
	$$
	0 \to \mathcal{O}_X(K_X)\to \mathcal{O}_X(K_X + S_X) \to \mathcal{O}_{S_X}(K_{S_X}) \to 0.
	$$
	Since $h^1(X, K_X) = h^2(X, \mathcal{O}_X) = 0$, we have
	\begin{equation} \label{eq: p_g(S_X)}
		p_g(S_X) = P_2(X) - p_g(X) = \frac{1}{2}K_X^3 + 2p_g(X) - 3 + l_2(X).
	\end{equation}
	If $N = 0$, i.e., $K_X^3 = \frac{4}{3}p_g(X) - \frac{10}{3}$, then $l_2(X)=0$ by \cite[Proposition 4.3]{HZ}. Thus the equation \eqref{eq: p_g(S_X)} becomes
	$$
	p_g(S_X) = \frac{1}{2}K_X^3 + 2p_g(X) - 3 = 2K_X^3 + 2 = \frac{1}{2} K_{S_X}^2 + 2.
	$$
	If $N = 1$, i.e., $K_X^3 = \frac{4}{3}p_g(X) - \frac{19}{6}$, then $l_2(X)=\frac{1}{4}$ by \cite[Proposition 4.4]{HZ}. Thus the equation \eqref{eq: p_g(S_X)} becomes
	$$
	p_g(S_X) = \frac{1}{2}K_X^3 + 2p_g(X) - \frac{11}{4} = 2K_X^3 + 2 = \frac{1}{2} K_{S_X}^2 + 2.
	$$
	If $N = 2$, i.e., $K_X^3 = \frac{4}{3}p_g(X) - 3$, then $l_2(X)=\frac{1}{2}$ by \cite[Proposition 4.4]{HZ}. Thus the equation \eqref{eq: p_g(S_X)} becomes
	$$
	p_g(S_X) = \frac{1}{2}K_X^3 + 2p_g(X) - \frac{5}{2} = 2K_X^3 + 2 = \frac{1}{2} K_{S_X}^2 + 2.
	$$
	As a result, we have 
	$$
	K_{S_X}^2 = 2p_g(S_X) - 4 \ge K_X^3 + 4p_g(X) - 10 > 10
	$$
	in all three cases $N = 0, 1, 2$. Here the last inequality is from \eqref{eq: p_g(S_X)}. The proof is completed.
\end{proof}

\begin{prop} \label{prop: pg23}
	In Theorem \ref{thm: Horikawa surface}, if $p_g(X) \ge 23$, then the relative canonical model $X_0$ is just the canonical model of $X$.
\end{prop}

\begin{proof}
	Under the assumptions in Theorem \ref{thm: Horikawa surface}, if $p_g(X) \ge 23$, by \cite[Proposition 3.13 and Lemma 3.3]{HZ}, we may write 
	$$
	f_*\omega_{X} = \mathcal{O}_{\PP^1}(a) \oplus \mathcal{O}_{\PP^1}(b),
	$$
	where $a \ge b \ge 1$ are two positive integers.
	
	Consider the relative canonical map $\phi: X \dasharrow \PP(f_*\omega_{X})$ of $X$ over $\PP^1$. Since the base locus of $|K_F|$ is a single point, we see that the $f$-horizontal indeterminacies of $\phi$ form a section $\Gamma$ of $f$ whose intersection $\Gamma \cap F$ with $F$ is just the base point of $|K_F|$. Moreover, by \cite[Corollary 3.5]{Miyaoka}, $K_X - bF$ is nef away from $\Gamma$. In particular, $\left((K_X - F) \cdot C\right) \ge 0$ for any integral curve $C \ne \Gamma$. On the other hand, by \cite[Proposition 3.5]{HZ}, we have $\left((K_X - F) \cdot \Gamma\right) \ge \frac{1}{3}(p_g(X) - 4) - (F \cdot \Gamma) \ge 0$. Thus we conclude that $K_X - F$ is nef. 
	
	Denote by $F_0$ a general fibre of $f_0: X_0 \to \PP^1$. Then $K_{X_0} + tF_0$ is ample for a sufficiently large $t$. Note that the above argument implies that $K_{X_0} - F_0$ is nef. Thus $K_{X_0} = \frac{t}{t+1}(K_{X_0} - F_0) + \frac{1}{t+1}(K_{X_0} + tF_0)$ is ample. The proof is completed.
\end{proof}


\section{Existence of simple fibrations in \texorpdfstring{$(1, 2)$}{(1, 2)}-surfaces} \label{section: simple fibration}

In this section, we study the explicit structure of the fibration on the canonical model of the threefold in Theorem \ref{thm: Horikawa surface}.

We first recall the definition of a simple fibration in $(1,2)$-surfaces as in \cite[Definition 4.1]{CP}.
\begin{df}\label{def: simple fibration}
	A \emph{simple fibration in $(1,2)$-surfaces} is a surjective morphism
	$\pi \colon X \rightarrow B$
	such that
	\begin{itemize}
		\item [(i)] $B$ is a smooth curve;
		\item [(ii)] $X$ is a threefold with at worst canonical singularities;
		\item [(iii)] $K_X$ is $\pi$-ample;
		\item [(iv)] for all $p \in B$, the canonical ring $R(X_p, K_{X_p}):=\bigoplus_d H^0(X_p,dK_{X_p})$ of the surface $X_p: = \pi^{*} p$ is generated by four elements of respective degree $1$, $1$, $2$ and $5$ and related by a single equation of degree $10$, where $K_{X_p} = K_X|_{X_p}$.
	\end{itemize}
\end{df}

For simplicity, if a threefold $X$ admits a simple fibration in $(1,2)$-surfaces, we often write that $X$ itself \emph{is a simple fibration} as in \cite{CP}. Moreover, if $B \simeq \PP^1$, we say that the simple fibration is \emph{regular}.

\begin{thm} \label{thm: simple fibration}
	Suppose that $X$ is a canonical threefold with $p_g(X) \ge 5$ such that one of the following holds:
	\begin{itemize}
		\item [(1)] the canonical dimension of $X$ is two, $p_g(X) \ge 7$ and $K_X^3 < \frac{4}{3}p_g(X) - \frac{17}6$.
		\item [(2)] 
        $p_g(X) = 6$ and $K_X^3 < \frac{109}{30}$. 
        \item[(3)] $p_g(X) = 5$ and $K_X^3 < \frac{61}{12}$.
	\end{itemize}
	Then there is a crepant birational morphism $X_0 \to X$ such that $X_0$ admits a regular simple fibration in $(1,2)$-surfaces. Moreover, if $p_g(X) \ge 23$, then $X_0 \simeq X$.
\end{thm}

\begin{proof}
	By \cite[Proposition 2.1]{HZ} and Lemma \ref{lem: p_g5}, there is a minimal model $X_1$ of $X$ so that $X_1$ admits a fibration $\pi_1 \colon X_1\to \PP^1$ whose general fibre is a smooth $(1,2)$-surface. Let $X_0$ be the relative canonical model of $X_1$ with respect to $\pi_1$. Then we have the induced fibration $\pi_0: X_0 \to \PP^1$. Let $F_p$ denote the fibre of $\pi_0$ over any closed point $p \in \PP^1$. 
	
	We first prove the following two claims.
	
	\textbf{Claim 1}. A general element $C \in |K_{F_p}|$ is an integral curve of arithmetic genus two. In particular, $F_p$ is integral and $K_{F_p}^2 = 1$.
	
	In fact, take a general member $S_{X_0} \in |K_{X_0}|$. By Theorem \ref{thm: Horikawa surface} (3), $K_{S_{X_0}}$ is nef and $\pi_0|_{S_{X_0}}$-ample, and $K_{S_{X_0}}^2 = 2p_g(S_{X_0}) - 4 > 10$. In particular, $p_g(S_{X_0}) \ge 8$. By \cite[\S 1]{Horikawa1}, $S_{X_0}$ itself is a canonical surface on the Noether line. By the classification of singular fibres in \cite{Horikawagenus2}, every fibre of $\pi_0|_{S_{X_0}}: S_{X_0} \to \PP^1$ is an integral curve of arithmetic genus two. That is, $C_p := S_{X_0}|_{F_p}$ is integral for every $p$. So is $F_p$. Thus $K_{F_p}^2 = 1$.
	
	\textbf{Claim 2}. For any integer $n \ge 1$, we have $h^1(F_p, nK_{F_p}) = 0$. Moreover, $p_g(F_p) = 2$.
	
	In fact, for any integer $n \ge 1$, consider the exact sequence
	\begin{equation}\label{eq: exact-sequence}
		\begin{split}
			0 &\to H^0(X_0, nK_{X_0})\to H^0(X_0, nK_{X_0}+F_p)\to H^0(F_p, nK_{F_p})\\
			& \to H^1(X_0, nK_{X_0})\to H^1(X_0, nK_{X_0}+F_p)\to H^1(F_p, nK_{F_p})\\
			& \to H^2(X_0, nK_{X_0}).
		\end{split}
	\end{equation}
	Now $H^i(X_0, nK_{X_0})$ vanishes for $i = 1, 2$ when $n=1$ by \cite[Lemma 3.4]{HZ} and the Serre duality, and when $n \ge 2$ by the Kawamata-Viehweg vanishing theorem. Thus we have $h^1(X_0, nK_{X_0} + F_p) = h^1(F_p, nK_{F_p})$, and this does not depend on $p$. Therefore, since $h^1(F_p,nK_{F_p}) = 0$ for a general $F_p$ which is a canonical $(1, 2)$-surface, we have 
	$$
	h^1(X_0, nK_{X_0}+F_p) = h^1(F_p, nK_{F_p}) = 0
	$$ 
	for all $F_p$. Moreover, all plurigenera $h^0(F_p, nK_{F_p}) = h^0(X_0, nK_{X_0} + F_p) - h^0(X_0, nK_{X_0})$ do not depend on $p$. We conclude that $p_g(F_p) = 2$.

	With the above two claims, we now consider the half canonical ring $R(C, K_{F_p}|_{C}) := \bigoplus_d H^0(C, dK_{F_p}|_C)$ for a general element $C \in |K_{F_p}|$. By Theorem \ref{thm: Horikawa surface}, $2K_{X_0}$ is Cartier, so is $2K_{F_p}$. By the adjunction, $C$ is a Gorenstein curve. Note that $\omega_{F_p}|_C$ is a torsion free sheaf, not necessarily locally free. Nevertheless, by \textbf{Claim 2}, we have $h^0(C, K_{F_p}|_C) = p_g(F_p) - 1 = 1$, and $C$ is also integral by \textbf{Claim 1}. By \cite[Theorem 5.2]{CFPR}, $R(C, nK_{F_p}|_{C})$ is generated by three elements of respective degree $1$, $2$ and $5$ and related by a single equation of degree $10$. Then we further apply the proof of \cite[Theorem 3.3 (1)]{FPRb} verbatim to deduce that for any $F_p$, the canonical ring $R(F_p, K_{F_p})$ is generated by four elements of respective degree $1$, $1$, $2$ and $5$, and they are related by a single equation of degree $10$. As a result, $\pi_0$ is exactly a regular simple fibration in $(1, 2)$-surfaces. Moreover, if $p_g(X) \ge 23$, then we have $X \simeq X_0$ by Proposition \ref{prop: pg23}. The proof is completed.
\end{proof}

\begin{remark}\label{rem: pg=4}
	One cannot hope to completely remove these assumptions in Theorem \ref{thm: simple fibration}, because $X_{10} \subset \PP(1,1,1,1,5)$ is a threefold of general type with $p_g=4$, $K^3=2$ that is not birational to any simple fibration in $(1,2)$-surfaces. Its canonical map gives a double cover over $\PP^3$.
\end{remark}

Combining Theorem \ref{thm: fibration exsits} and \ref{thm: simple fibration} together, we immediately have the following corollary.

\begin{cor}\label{cor: simple fibration}
	Suppose that $X$ is a canonical threefold on the refined Noether line with $p_g(X) \ge 5$. Then there is a crepant birational morphism $X_0 \to X$ such that $X_0$ admits a regular simple fibration in $(1,2)$-surfaces. Moreover, if $p_g(X) \ge 23$, then $X_0 \simeq X$.
\end{cor}


\section{Simple fibrations as hypersurfaces in toric fourfolds} \label{section: toric 4fold}
Let $f: X \to \PP^1$ be a simple fibration in $(1,2)$-surfaces with $p_g(X) > 0$. Consider the relative canonical algebra 
$$
\mathcal{R} = \bigoplus_{m \geq 0}{\mathcal R}_m=\bigoplus_{m \geq 0} f_* \omega^{[m]}_{X/\PP^1}
$$
as a graded $\mathcal{O}_{\PP^1}$-algebra. By \cite[Theorem 4.6]{CP}, $X$ is isomorphic to a hypersurface of degree $10$ in the $\PP(1, 1, 2, 5)$-bundle $\mathbf{F}(X) : = \mathrm{\mathbf{Proj}}\, \mathcal{R}$ over $\PP^1$. Moreover, the fibration $\pi: \mathbf{F}(X) \to \PP^1$ admits two sections $\s_2$ and $\s_5$ such that for every point of $\s_2$ (resp. $\s_5$), there is an analytic neighborhood on which $\mathbf{F}(X)$ is isomorphic to the product of a disk and a quotient singularity of type $\frac{1}{2}(1, 1, 1)$ (resp. $\frac{1}{5}(1, 1, 2)$).

Note that $\mathcal{R}$ has a natural graded $\mathcal{O}_{\PP^1}$-subalgebra $\mathcal{Q}$ locally generated by $1$, $\mathcal{R}_1$ and $\mathcal{R}_2$ (see \cite[Definition 4.10]{CP}). Then $\mathbf{Q}(X) := \mathrm{\mathbf{Proj}}\, \mathcal{Q}$ is a ${\PP}(1,1,2)$-bundle over $\PP^1$. 

Since the fibres of $f$ are hypersurfaces of degree $10$ in $\PP (1,1,2,5)$, the multiplication in $\mathcal{R}$ yields an exact sequence
\begin{equation} \label{eq: multiplication exact sequence}
	0 \to \Sym^2 \mathcal{R}_1 \to \mathcal{R}_2 \to \mathcal{E}_2 \to 0,
\end{equation}
where ${\mathcal E}_2$ is a line bundle. Since every vector bundle over $\PP^1$ is a direct sum of line bundles, we may uniquely write
\begin{equation} \label{eqn: R1}
   	\mathcal{R}_1 = \mathcal{O}_{\PP^1} (d_0) x_0 \oplus \mathcal{O}_{\PP^1} (\delta - d_0) x_1,
\end{equation}
with $2d_0 \le \delta$, so that $\delta = \deg \mathcal{R}_1$. Set
\begin{gather} \label{eqn: dNd2}
\begin{split}
    d_2 := \deg \mathcal{E}_2, \quad  N := 3d_2 &- 2\delta, \quad
    d := \delta-d_2, \\ 
    e:= 3d-2d_0+N &= \delta-2d_0 \ge 0.
\end{split}
\end{gather}
By \cite[Theorem 2.7]{Fujita} and \cite[Proposition 4.6]{Viehweg2} ($p_g(X) > 0$ implies that $h^0 \left(\PP^1, \mathcal{R}_1 \otimes \mathcal{O}_{\PP^1}(-2)\right) > 0$ thus $\mathcal{R}_1$ contains an ample line bundle), we have
\begin{equation}\label{eqn: d0d2>0}
    d_0 \ge 0, \quad  d_2 \ge 1.  
\end{equation}
By \cite[Definition 4.18 and Proposition 4.21]{CP} and \cite[Lemma 3.4]{HZ}, $N$ is non-negative and 
\begin{equation} \label{eq: N}
    K^3_X = \frac43 \chi(\omega_X) - 2 \chi(\mathcal{O}_{\PP^1}) + \frac16 N = \frac{4}{3}p_g(X) - \frac{10}{3} + \frac16 N.
\end{equation}

\subsection{Simple fibrations with $N \le 4$}
We start from the following lemma.
\begin{lem} \label{lem: epsilon_2 splits}
	If $N \leq 4$, then the short exact sequence \eqref{eq: multiplication exact sequence} splits.
\end{lem}

\begin{proof}
    By \eqref{eqn: R1}, we have
	$$
	\Sym^2 \mathcal{R}_1 = \mathcal{O}_{\PP^1} (2d_0) x_0^2 \oplus \mathcal{O}_{\PP^1} (\delta) x_0x_1 \oplus \mathcal{O}_{\PP^1}(2\delta-2d_0) x_1^2,   
	$$
	and the exact sequence \eqref{eq: multiplication exact sequence} gives a class in
    \begin{multline} \label{eq: Ext with three summands}
        \mathrm{Ext}^1 \left(\mathcal{E}_2, \Sym^2 \mathcal{R}_1\right) \simeq H^1 \left(\PP^1, \Sym^2 \mathcal{R}_1 \otimes \mathcal{E}_2^\vee \right) \\
        \simeq H^1 \left(\PP^1, \mathcal{O}_{\PP^1} (2d_0-d_2) \right)
		\oplus H^1 \left(\PP^1, \mathcal{O}_{\PP^1} (d) \right) \oplus H^1 \left(\PP^1, \mathcal{O}_{\PP^1} (2\delta - 2d_0 - d_2) \right).
    \end{multline}
    Since $N \leq 4$ and $e = 3d-2d_0+N \ge 0$ by \eqref{eqn: dNd2}, we deduce from \eqref{eqn: d0d2>0} that $d \geq \left\lceil - \frac13 N \right\rceil \ge -1$. Thus the second and the third term in \eqref{eq: Ext with three summands} vanish, and
    \begin{equation}\label{eq: Ext with one summand}
		\Ext^1 \left(\mathcal{E}_2, \Sym^2 \mathcal{R}_1\right)
		\simeq H^1 \left(\PP^1, \mathcal{O}_{\PP^1} (2d_0 - d_2) \right).
	\end{equation}
	
	By \cite[Lemma 4.11, Corollary 4.15 and 4.16 (1)]{CP}, the inclusion $\mathcal{Q} \hookrightarrow \mathcal{R}$ induces a double cover $X \to \mathbf{Q}(X)$ whose branch divisor is given by a map
	\begin{equation} \label{eq: branching curve}
		\mathcal{O}_{{\mathbb P}^1}(2\delta + 2d_2) = \left((\det \mathcal{E}_1) \otimes \mathcal{E}_2 \right)^{\otimes 2} \hookrightarrow {\mathcal Q}_{10}.
	\end{equation}
	
	Let $\mathcal{I}$ be the graded ideal sheaf of $\mathcal{Q}$ locally generated by the direct summand $\mathcal{O}_{\PP^1} (\delta - d_0) x_1$ in $\mathcal{Q}_1$. Let $\mathcal{T} = \mathcal{Q}/\mathcal{I}$ be the graded quotient $\mathcal{O}_{\PP^1}$-algebra.
	Since the multiples of $x_1$ in $\mathcal{R}_2$ are in the image of the map $\Sym^2 \mathcal{R}_1 \to \mathcal{R}_2$, the exact sequence \eqref{eq: multiplication exact sequence} fits into the following commutative diagram
	$$
	\xymatrix{
		0 \ar[r] & \Sym^2 \mathcal{R}_1 \ar[r] \ar@{=}[d] & \ar[r]\ar@{=}[d] \mathcal{R}_2 & \ar[r]\ar@{=}[d] \mathcal{E}_2 & 0
		\\
		0\ar[r] & \Sym^2 \mathcal{Q}_1 \ar[r]\ar@{->>}[d] & \ar[r]\ar@{->>}[d] \mathcal{Q}_2 & \ar[r]\ar@{=}[d] \mathcal{E}_2 & 0
		\\
		0\ar[r] & \Sym^2 \mathcal{T}_1\ar[r] & \ar[r] \mathcal{T}_2 & \ar[r] \mathcal{E}_2 & 0
	}
	$$
	Since $\mathcal{T}_1 \simeq \mathcal{O}_{\PP^1}(d_0)$, the exact sequence at the bottom is given by a class in 
	\begin{equation}\label{eq: Ext T}
	\Ext^1(\mathcal {E}_2, \Sym^2 \mathcal {T}_1) \simeq H^1 \left(\PP^1, \mathcal{O}_{\PP^1} (2d_0-d_2) \right).
	\end{equation}
	Moreover, comparing with \eqref{eq: Ext with one summand}, the vertical maps connecting the top row with the bottom row induce an isomorphism on the $\Ext^1$-groups. It follows that  the exact sequence \eqref{eq: multiplication exact sequence} splits if and only if the exact sequence 
	\begin{equation}\label{eq: multiplication exact sequence on T}
		0 \to \Sym^2 \mathcal{T}_1 \to \mathcal{T}_2 \to \mathcal{E}_2 \to 0
	\end{equation}
	splits.

	To conclude the proof, we assume by contradiction that \eqref{eq: multiplication exact sequence on T} does not split. Then by \eqref{eq: Ext T}, we have $2d_0\le d_2-2$. Write $\mathcal{T}_2=\mathcal{O}_{\PP^1}(a)\oplus\mathcal{O}_{\PP^1}(b)$ with $a+b=2d_0+d_2$ and $2d_0<a\le b<d_2$. Thus the maximal degree of a direct summand of ${\mathcal T}_2$ is $d_2-1$. This implies that all direct summands of $\mathcal{T}_{10} \simeq \Sym^5 {\mathcal T}_2$ have degree at most $5d_2 - 5$. Since $(5d_2 - 5) - (2\delta + 2d_2) = N - 5 \leq -1$, it follows that $\Hom \left(\mathcal{O}_{\PP^1}(2\delta + 2d_2), \mathcal{O}_{\PP^1}(5d_2 - 5) \right)=0$, and therefore
	$$
	\Hom(\mathcal{O}_{\PP^1}(2\delta + 2d_2), \mathcal{T}_{10}) = 0.
	$$
	This implies that the image of the map \eqref{eq: branching curve} is in the ideal generated by $x_1$. As a result, the branch divisor of the double cover $X \to \mathbf{Q}(X)$ contains in particular the singular locus of $\mathbf{Q}(X)$, a section of $\mathbf{Q}(X) \to \PP^1$. Then $X$ contains the section $\mathfrak{s}_2$, contradicting \cite[Proposition 4.9 (2)]{CP}. The proof is completed.
\end{proof}

\begin{thm}\label{thm: X is hypersurface}
	If $N \leq 4$, then $\mathbf{F}(X) = \mathbb{C}^6 /\!/ (\mathbb{C}^*)^2$ is a toric fourfold with the weight matrix
	\begin{equation}\label{eq: matrix with good weights}
		\begin{pmatrix}
			t_0 & t_1 & x_0 & x_1 & y & z \\
			1 & 1 & d-d_0 & d_0-2d-N & -N & -2N \\
			0 & 0 & 1 & 1 & 2 & 5 \\
		\end{pmatrix}
	\end{equation}
	and the irrelevant ideal $(t_0,t_1) \cap (x_0,x_1,y,z)$, where $d_0 \ge 0$. 
    Moreover, $X$ is isomorphic to a divisor in $\mathbf{F}(X)$ of bidegree $(-4N, 10)$, and the defining equation of $X$ has the form 
	\begin{equation} \label{eq: defining equation}
		z^2 = \sum_{a_0 + a_1 + 2a_2 = 10} c_{a_0, a_1, a_2}(t_0, t_1) x_0^{a_0} x_1^{a_1} y^{a_2},
	\end{equation}
	where each $c_{a_0, a_1, a_2}(t_0, t_1)$ is a homogeneous polynomial of degree
	\begin{equation} \label{eq: degree c}
		\deg c_{a_0, a_1, a_2} = N + \frac12 (a_0 + a_1) d + \frac 12 (a_1-a_0) e.
	\end{equation}
\end{thm}

\begin{proof}
	Recall that $X$ is isomorphic to a hypersurface of degree $10$ in the $\PP(1, 1, 2, 5)$-bundle $\mathbf{F}(X)$ over $\PP^1$. By Lemma \ref{lem: epsilon_2 splits}, $\mathcal{R}_2 = (\Sym^2 \mathcal{R}_1) \oplus \mathcal{E}_2$. By \cite[Proposition 4.14 and Corollary 4.16]{CP}, we know that $\mathcal{R}_5 = \mathcal{Q}_5 \oplus \mathcal{E}_5$, where $\mathcal{E}_5 = (\det \mathcal{R}_1) \otimes \mathcal{E}_2$ is a line bundle. Then by \cite[Example 3.16]{CP}, $\mathbf{F}(X)$ is a toric variety with the weight matrix
	\begin{equation}\label{eq: matrix with bad weights}
		\begin{pmatrix}
			t_0 & t_1 & x_0 & x_1 & y & z \\
			1 & 1 & -d_0 & d_0 - \delta & -d_2 & -\delta - d_2 \\
			0 & 0 & 1 & 1 & 2 & 5\\
		\end{pmatrix}
	\end{equation}
	and the irrelevant ideal $(t_0,t_1) \cap (x_0,x_1,y,z)$. Moreover, by \cite[Corollary 4.15]{CP}, up to isomorphism, $X \in H^0 \left(\mathbf{F}(X), \mathcal{O}_{\mathbf{F}(X)}(10) \otimes \pi^*\mathcal{E}_5^{-2}\right)$, and it is defined by an equation
	$$
	z^2 = \sum_{a_0 + a_1 + 2a_2 = 10} b_{a_0, a_1, a_2}(t_0, t_1) x_0^{a_0} x_1^{a_1} y^{a_2},
	$$
	where 
	$$
	\deg b_{a_0, a_1, a_2} = -2d + a_0d_0 + a_1(\delta - d_0) + a_2d_2.
	$$
	
	It is easy to check that $d_0 - \delta + d = d_0 - 2d + N$, $-d_2 + 2d = -N$, and that $-\delta - d_2 + 5d = -2N$. We pass to the matrix \eqref{eq: matrix with good weights} by adding in the matrix \eqref{eq: matrix with bad weights} the second row multiplied by $d$ to the first row. In the meantime, the defining equation is also changed to the desired form as in \eqref{eq: defining equation}, where
	\begin{align*}
		\deg c_{a_0,a_1,a_2} & = -4N + a_0(d_0 - d) + a_1(N + 2d - d_0) + a_2 N \\
		& = N + \frac12 (a_0 + a_1) d + \frac 12 (a_1-a_0) (3d - 2d_0 + N).
	\end{align*}
	Here we use the fact that $a_2 = 5 - \frac{1}{2}(a_0 + a_1)$. The proof is completed.
\end{proof}

\begin{remark}
	The assumption that $N \ge 4$ in Theorem \ref{thm: X is hypersurface} is optimal, because there exists a regular simple fibration $X$ in $(1, 2)$-surfaces with $N = 5$ that is not a divisor in the toric fourfold with the weight matrix as in \eqref{eq: matrix with good weights}. See Section \ref{subsection: N=5}.
\end{remark}

\subsection{Existence of simple fibrations of type $(d, N; d_0)$} \label{subsection: existence}

In the following, we denote by $\mathbb{F}(d, N; d_0)$ the toric fourfold whose weight matrix and irrelevant ideal are identical to those in Theorem \ref{thm: X is hypersurface}.
We use $D_{\rho}$ for the torus-fixed divisor $\{\rho = 0\}$ on $\mathbb{F}(d, N; d_0)$, where $\rho \in \{t_0, t_1, x_0, x_1, y, z\}$. 

Let $F$ be the divisor $\{t_0 = 0\}$ and let $H = D_{x_0}+(d_0-d)F$. Then the classes of the torus invariant divisors in the class group of $\mathbb{F}(d, N; d_0)$ are 
$$
D_{t_0} = D_{t_1} = F, \quad D_{x_0} = H + (d-d_0)F, \quad  \ D_{x_1} = H + (d_0-2d-N)F,
$$
$$
D_{y} = 2H - NF, \quad D_{z} = 5H - 2NF.\footnote{The notation here will be used in \S \ref{section: moduli} as well.}
$$
With this notation, the section $\s_2$ (resp. $\s_5$) of the fibration $\mathbb{F}(d, N; d_0) \to \PP^1$ is just $D_{x_0} \cap D_{x_1} \cap D_z$ (resp. $D_{x_0} \cap D_{x_1} \cap D_y$). Moreover, we will use the further section $\s_0 = D_{x_1} \cap D_y \cap D_z$.

\begin{df} \label{def: type}
	We say that a regular simple fibration $X$ in $(1, 2)$-surfaces is \emph{of type $(d, N; d_0)$}, if it is isomorphic to a hypersurface of bidegree $(-4N, 10)$ in $\mathbb{F}(d, N; d_0)$. Such an $X$ will be denoted by $X(d, N; d_0)$ in the sequel.
\end{df}

By Theorem \ref{thm: X is hypersurface}, $X(d, N; d_0)$ exists if and only if a general member in the linear system $|10H - 4NF|$ has at worst canonical singularities. The following proposition gives necessary and sufficient conditions on the triple $(d, N; d_0)$ for the existence of $X(d, N; d_0)$.

\begin{prop} \label{prop: sing-X}
	Suppose that $d \ge 0$. Then $X(d, N; d_0)$ exists if and only if
	$$
	\frac14 (d + N) \le d_0 \le \frac12 (3d+N).
	$$
	A general $X(d, N; d_0)$ has $N \times \frac12 (1, 1, 1)$ singularities at isolated points on $\s_2$ and possibly has canonical singularities along $\s_0$.   
\end{prop}

\begin{proof}
    A more detailed version of this Proposition is proved in Appendix \ref{appendix: sing}.
\end{proof}

\subsection{Canonical divisor of $X(d, N; d_0)$} \label{subsection: KX}
For simplicity, we denote $\mathbb{F}(d, N; d_0)$ and $X(d, N; d_0)$ by $\mathbb{F}$ and $X$, respectively. 
\begin{lem} \label{lem: intersection}
	We have
	$$
	(H^3 \cdot F) = \frac{1}{10}, \quad H^4 = \frac{1}{10}d + \frac{19}{100}N.
	$$
\end{lem}

\begin{proof} 
	Since $D_{t_0} \cap D_{x_0} \cap D_y \cap D_z$ is a reduced smooth point, we have
	$$
	(D_{t_0} \cdot D_{x_0} \cdot D_y \cdot D_z) = 10(H^3 \cdot F) = 1.
	$$
	Thus $(H^3 \cdot F) = \frac{1}{10}$. On the other hand, since $D_{x_0} \cap D_{x_1} \cap D_y \cap D_z$ is empty, we have
	$$
	(D_{x_0} \cdot D_{x_1} \cdot D_y \cdot D_z) = 10H^4 - (10d + 19N)(H^3 \cdot F) = 0.
	$$
	Thus $H^4 = \frac{10d + 19N}{100}$. The proof is completed.
\end{proof}

Now we describe the nef cone and the ample cone of $\mathbb{F}$.
\begin{lem} \label{lem: nef and ample cone}
	The numerical divisor class $aH + bF$ is 
	\begin{enumerate}
		\item nef if and only if $a \ge 0$ and $b \ge a \cdot \max\left\{d-d_0, -\frac25N \right\}$
		\item ample if and only if $a > 0$ and $b > a \cdot \max\left\{d-d_0, -\frac25N\right\}$
	\end{enumerate}
\end{lem}

\begin{proof}
	By \cite[Theorem 6.3.12 and 6.3.13]{CLS}, $aH + bF$ is nef (resp. ample) if and only if its intersection number with all toric invariant curves is non-negative (resp. positive). Since toric invariant curves on $\mathbb{F}$ are intersections of three toric invariant divisors, we only need to check the positivity of all $\left((aH + bF) \cdot D_{\rho_1} \cdot D_{\rho_2} \cdot D_{\rho_3}\right)$, where $\rho_j \in \{t_0, t_1, x_0, x_1, y, z\}$. Moreover, it is sufficient to check the intersection numbers listed below, computed by Lemma \ref{lem: intersection}: 
	\begin{align*}
		\left((aH + bF) \cdot D_{t_0} \cdot D_{x_0} \cdot D_{x_1}\right) & = a (H^3 \cdot F) = \frac{1}{10} a, \\
		\left((aH + bF) \cdot D_{x_0} \cdot D_{x_1} \cdot D_y\right) & = 2a H^4 + (2b - 2ad - 3aN) (H^3 \cdot F) \\ 
		& = \frac15 b + \frac2{25} Na, \\
		\left((aH + bF) \cdot D_{x_0} \cdot D_{x_1} \cdot D_z\right) & = 5a H^4 + (5b - 5ad - 7aN) (H^3 \cdot F) \\
		& = \frac12 b + \frac 14 Na, \\
		\left((aH + bF) \cdot D_{x_0} \cdot D_y \cdot D_z\right) & = 10aH^4 + \left(10b - 9aN + 10a(d-d_0) \right) (H^3 \cdot F) \\
		& = b + a(N + d - d_0), \\
		\left((aH + bF) \cdot D_{x_1} \cdot D_y \cdot D_z\right) & = 10aH^4 + \left(10b - 19aN + 10a(d_0 - 2d) \right) (H^3 \cdot F) \\
		& = b + a(d_0 - d).
	\end{align*}
	Thus $aH + bF$ is nef if and only if $a \ge 0$ and $b \ge a \cdot \max \left\{d-d_0, -\frac25N \right\}$. The ampleness part follows similarly.
\end{proof}

By \cite[Proposition 1.1]{CP}, we have $\omega_{\FF} = \mathcal{O}_{\mathbb{F}}\left( (d+4N-2)F-9H \right)$. Since $X \in |10H - 4NF|$, the adjunction formula gives 
$$
K_X = (K_{\mathbb{F}}+X)|_X = \left((d - 2)F + H\right)|_X.
$$
We have the following propositions.

\begin{prop} \label{prop: canonical image}
	Let $\Sigma$ be the canonical image of $X$.
	\begin{enumerate}
		\item If $d_0 \ge 3$, then $\Sigma$ is isomorphic to the Hirzebruch surface $\mathbb{F}_e$;
		\item If $d_0 = 2$, then $\Sigma$ is the cone over a rational normal curve of degree $e = 3d - 4 + N$.
		\item If $d_0 = 1$, then $\Sigma$ is a rational normal curve of degree $e - 1 = 3d - 3 + N$.
	\end{enumerate}
	In each case, we have $p_g(X) = 3d - 2 + N$.
\end{prop}

\begin{proof}
	Consider the short exact sequence
	$$
	0 \to \mathcal{O}_\FF (K_\FF) \to \mathcal{O}_\FF (K_\FF + X) \to \mathcal{O}_X (K_X) \to 0.
	$$
	Since $H^0(\FF, K_\FF) = H^1(\FF, K_\FF) = 0$, it follows that 
	$$
	H^0(X, K_X) = H^0(\FF, K_\FF + X) = H^0 \left(\FF, (d-2)F + H\right).
	$$
	
	If $d_0 \ge 2$, then a basis for $H^0(X, K_X)$ is given by the monomials in 
	\begin{multline*}
		t_0^{d_0-2}x_0, t_0^{d_0-3}t_1x_0, \ldots, t_1^{d_0-2}x_0, \\
		t_0^{3d-2+N-d_0}x_1, t_0^{3d-3+N-d_0}t_1x_1, \ldots, t_1^{3d-2+N-d_0}x_1.
	\end{multline*}
	Thus $X$ is mapped to the Hirzebruch surface $\mathbb{F}_e$ by $|K_X|$ if $d_0 \ge 3$. If $d_0 = 2$, then $x_0$ is a basis element, and the negative curve on $\mathbb{F}_e$ is contracted to give a cone.
	
	If $d_0=1$, then there are no monomials involving $x_0$, and the basis becomes
	$$
	t_0^{3d-3+N}x_1, t_0^{3d-4+N}t_1x_1, \ldots, t_1^{3d-3+N}x_1.
	$$
	Clearly, now $\Sigma$ is a rational normal curve of the desired degree.
	
	From the above computation, we see that $p_g(X) = 3d - 2 + N$ in each case. The proof is completed.
\end{proof}

\begin{prop} \label{prop: KX-nef} 
	The canonical divisor $K_X$ is
	\begin{enumerate}
		\item nef if $\min \left\{d_0, d + \frac25N\right\} \ge 2$;
		\item ample if $\min\left\{d_0, d + \frac25N\right\} > 2$.
	\end{enumerate}
	Moreover, we have
	$$
	K_X^3 = 4d - 6 + \frac32N.
	$$
\end{prop}

\begin{proof}
	By Lemma \ref{lem: nef and ample cone}, $K_X$ is the restriction of a nef divisor on $\FF$ if $d-2\ge\max\{d-d_0,-\frac25N\}$. Separating the two inequalities, we get $d-2 \ge d-d_0$ which is equivalent to $d_0 \ge 2$, and $d-2 \ge - \frac25N$ which is equivalent to $d + \frac25N \ge 2$. The ampleness part follows similarly. 
	
	By Lemma \ref{lem: intersection}, we have
	\begin{align*}
		K_X^3 & = \left( \left((d-2)F + H\right)^3 \cdot (10H - 4NF)\right) \\
		& = 10H^4 + \left(30(d - 2) - 4N\right) (H^3 \cdot F) \\
		& = 4d - 6 + \frac{3}{2}N.
	\end{align*}
	The proof is completed.
\end{proof}

\subsection{Classification of $X(d, N; d_0)$ with nef but non-ample canonical classes}
We still denote $X(d, N; d_0)$ by $X$. Suppose that $d \ge 0$. By Proposition \ref{prop: sing-X}, when $\min\left\{d_0, d + \frac25N\right\} > 2$, $K_X$ is ample. Thus $X$ is canonical. 

When $\min\left\{d_0, d + \frac25N\right\} = 2$, $K_X$ is nef, but the canonical model of $X$ is a crepant contraction. In this case, $X$ can be explicitly classified. In fact, if $d_0 = 2$, by Proposition \ref{prop: sing-X}, we have $d + N \le 8$ and $3d + N \ge 4$. Thus $X$ is one of the following three cases:
\begin{enumerate}
    \item $X(0,N;2)$ for $5 \le N \le 8$;
	\item $X(1,N;2)$ for $3 \le N \le 7$;
	\item $X(d,N;2)$ for $2 \le d \le 8$, $0\le N \le 8-d$.
\end{enumerate}
If $d + \frac25N = 2$, then there is only one extra case:
\begin{enumerate}
    \item [(4)] $X(2,0;3)$.
\end{enumerate}
In each of the above cases, $K_X$ is big, because $K_X^3 = 4d - 6 + \frac{3}{2}N > 0$ by Proposition \ref{prop: KX-nef}. On the other hand, $K_X$ is never ample. Indeed, if $d + \frac25N > 2$ and $d_0 = 2$, then the proof of Proposition \ref{prop: sing-X} (see Appendix \ref{appendix: sing}) shows that the section $\s_0 = D_{x_1} \cap D_y \cap D_z$ is contained in $X$. By the proof of Lemma \ref{lem: nef and ample cone}, we know that $(K_X \cdot \s_0) = 0$. If $d + \frac25N = 2$, then $X = X(0,5;2)$, $X(2,0;2)$ or $X(2,0;3)$. By \cite[Example 1.12]{CP}, the canonical divisor of $X(2,0;3)$ is not ample. In the other two cases, by Theorem \ref{thm: X is hypersurface}, we always have $\deg c_{10,0,0} = 0$. We may assume the defining equation of $X$ is of the form $z^2 = x_0^{10} + \cdots$. In particular, the curve $\Gamma = D_{x_1} \cap D_{y} \cap D$ is contained in $X$, where $D$ is the divisor in $\FF$ defined by the equation $z = x_0^5$. Note that now $D_z \sim 5D_{x_0}$, which implies that $D \sim 5D_{x_0}$. By the proof of Lemma \ref{lem: nef and ample cone}, we know that $(K_X \cdot A) = 5(K_X \cdot \s_5) = 0$.

When $\min\left\{d_0, d + \frac25N\right\} < 2$, $K_X$ is no longer the restriction of a nef divisor on $\FF$. We will classify those explicitly in \S \ref{subsection: K non-nef}. Nevertheless, the following example shows that sometimes $K_X$ is still nef. 

\begin{example} \label{ex!X(1,2;2)}
	Consider the special hypersurface $X_{12}$ in $\PP(1,1,1,2,6)$ defined by the following equation
	$$
	c^2 = \sum_{k+2l \le 10} A^{(k,l)}_{12-k-2l}(a_1,a_2)a_0^kb^l
	$$
	of degree 12, where $a_0, a_1, a_2, b, c$ are the coordinates and $A_m^{(k,l)}$ are general homogeneous forms of degree indicated by the subscript. Then $\mathcal{O}_{X_{12}}(K_{X_{12}}) = \mathcal{O}_{X_{12}}(1)$. Thus $p_g(X_{12}) = 3$ and $K_{X_{12}}^3 = 1$. Since the equation has degree $12$, the right-hand side is contained in the ideal $(a_1,a_2)^2$. The hypersurface $X_{12}$ has a pencil over $\PP^1$ given by $(a_0: a_1: a_2: b: c) \mapsto (a_1: a_2)$, with the base locus $\Gamma : = \{a_1 = a_2 = c = 0\} \subset X_{12}$. It is easy to see that $X_{12}$ has $A_1$ singularities along the curve $\Gamma$, with a non-isolated $cA_1 \subset \frac12(1,1,1,0,0)$ hyperquotient singularity at $(0:0:0:1:0)$ on $\Gamma$. 
	
	Let $X \to X_{12}$ be the blow-up along $\Gamma$ which is a crepant partial resolution. Then $X$ is quasi-smooth with two $\frac12(1,1,1)$ singularities, $K_X$ is nef, and the induced fibration $f: X \to \PP^1$ is a regular simple fibration in $(1, 2)$-surfaces. Now by \eqref{eq: N}, $N = 6K_X^3 - 8p_g(X) + 20 = 2$. By Theorem \ref{thm: X is hypersurface}, $X$ is isomorphic to $X(d, 2; d_0)$ for some $d$ and $d_0$. By Proposition \ref{prop: KX-nef}, we deduce that $d = 1$. Since that canonical image of $X_{12}$ is $\PP^2$, by Proposition \ref{prop: canonical image}, we have $d_0 = 2$.
	
	Note that in this case, $\min\left\{d_0, d + \frac25N\right\} = d + \frac25N = \frac{9}{5}$, which is the largest possible value that is less than two.
\end{example}


\section{Moduli spaces of threefolds on the refined Noether line} \label{section: moduli}

In this section, we describe the moduli space of the canonical threefolds $X$ on the refined Noether line with $p_g(X) \geq 5$. 

Given such a threefold $X$, by Corollary \ref{cor: simple fibration}, up to a crepant birational morphism, we may assume that $X$ admits a regular simple fibration in $(1, 2)$-surfaces. Set 
$$
N := 6K_X^3 - 8p_g(X) + 20.
$$ 
Then $N \in \{0, 1, 2\}$. By Theorem \ref{thm: X is hypersurface}, $X$ is isomorphic to $X(d, N; d_0)$ as in Definition \ref{def: type} for some $d$ and $d_0 \ge 0$. By Proposition \ref{prop: KX-nef}, 
$$
p_g(X) = 3d - 2 + N.
$$ 
Thus $d \ge 3$ when $N = 0$, and $d \ge 2$ when $N = 1, 2$. By Theorem \ref{thm: fibration exsits}, the canonical dimension of $X$ is two. Thus $d_0 \ge 2$ by Proposition \ref{prop: canonical image}. 

For each $N \in \{0, 1, 2\}$, let $\M^N_d(d_0)$ denote the corresponding modular family of hypersurfaces $X(d, N; d_0)$ in $\FF(d, N; d_0)$. Then it is unirational. Let $\M_{K^3, p_g}$ be the moduli space of canonical threefolds with $p_g = 3d - 2 + N$ and $K^3 = 4d - 6 + \frac N6$. By Proposition \ref{prop: sing-X}, there is a non-trivial morphism 
$$
\Phi^N_{d, d_0}: \M^N_d(d_0) \to \M_{K^3, p_g}
$$
when $\frac{1}{4}(d + N) \leq d_0 \leq \frac{1}{2}(3d + N)$. By Proposition \ref{prop: KX-nef}, if $d_0 \ge 3$, then $X(d, N; d_0)$ is a canonical model, and $\Phi^N_{d, d_0}$ is an isomorphism onto its image. If $d_0 = 2$, then $X(d, 0; d_0)$ is not a canonical model in general. However, the morphism onto its canonical model is crepant. By \cite[Main Theorem]{KawamataMatsuki} on the finiteness of minimal models for threefolds, each canonical model admits only finitely many such morphisms. Thus $\Phi^N_{d, d_0}$, if not one-to-one, is at least finite-to-one onto its image.

\subsection{The dimension of $\M^N_d(d_0)$} 

From now on, we set $\Delta^N_d(d_0)$ for the dimension of $\M^N_d(d_0)$. In the following, we adopt the notation for divisors on $\FF(d, N; d_0)$ introduced in \S \ref{subsection: existence}. 

By Theorem \ref{thm: X is hypersurface}, every $X(d, N; d_0)$ admits a finite morphism of degree $2$ over $D_z$, whose branch locus $B$ is an element in $H^0(D_z, 10H_{D_z} - 4NF_z)$, where $H_{D_z} = H|_{D_z}$ and $F_z = F|_{D_z}$. The dimension of $\M^N_d(d_0)$ is therefore equal to the dimension of the family of pairs $(D_z, B)$, i.e.,
\begin{equation} \label{eq: dimension formula}
	\Delta^N_d(d_0) = h^0(D_z, 10H_{D_z} - 4NF_z) - \dim \Aut D_z-1.
\end{equation}

We first compute the dimension of the automorphism group of $D_z$.

\begin{lem} \label{lem: dimension AutDz}
	The dimension of the automorphism group of $D_z$ is
	$$
	\dim\Aut D_z=
	\begin{cases}
		3d + 10, & \text{ if } d_0 = \frac12(3d + N);\\ 
		6d - 2d_0 + 9 + N, & \text{ if } d + \frac12 N \le d_0 < \frac12(3d + N);\\
		8d - 4d_0 + 8 + 2N, & \text{ if } \frac 14(d + N) \le d_0 < d + \frac12 N.\\ 
	\end{cases}
	$$
\end{lem}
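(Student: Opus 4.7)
The plan is to exploit that $D_z$ is a complete toric $3$-fold whose Cox ring is the polynomial ring $\CC[t_0, t_1, x_0, x_1, y]$ graded by the first five columns of the weight matrix \eqref{formula!weightmatrix}. For such a variety, by Demazure's description, the connected component of $\Aut(D_z)$ is identified with the group of graded $\CC$-algebra automorphisms of the Cox ring modulo the $(\CC^*)^2$-action encoded in the grading, and a graded automorphism is determined by sending each generator $v$ to an arbitrary polynomial of the same bidegree as $v$. Writing $m_v$ for the dimension of the homogeneous component of the Cox ring in the bidegree of $v$, this reduces the computation to
\[
\dim \Aut D_z = m_{t_0} + m_{t_1} + m_{x_0} + m_{x_1} + m_y - 2.
\]

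I would then compute each $m_v$ by a direct monomial count. The counts $m_{t_0} = m_{t_1} = 2$ are immediate. Setting $e := 3d - 2d_0$, the bidegree of $x_0$ is realised by $x_0$ itself and by the $e + 1$ monomials $t_0^a t_1^b x_1$ with $a + b = e$, so $m_{x_0} = e + 2$; symmetrically $m_{x_1} = 1$ unless $e = 0$, in which case $x_0$ and $x_1$ share a bidegree and $m_{x_0} = m_{x_1} = 2$. For $m_y$, the monomials of bidegree $(0, 2)$ are $y$ together with the $d + 1$ monomials $t_0^a t_1^b x_0 x_1$ with $a + b = d$, the $4d - 2d_0 + 1$ monomials $t_0^a t_1^b x_1^2$ with $a + b = 4d - 2d_0$, and the $2(d_0 - d) + 1$ monomials $t_0^a t_1^b x_0^2$ with $a + b = 2(d_0 - d)$; the last family contributes only when $d_0 \geq d$.

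Assembling these pieces under each of the three hypotheses on $d_0/d$ is a routine arithmetic check that yields the three formulas stated in the Lemma. I expect no real obstacle beyond case bookkeeping; the only two thresholds that need careful handling are $d_0 = d$, at which the $t_0^a t_1^b x_0^2$ monomials first enter $m_y$ and explain the discrepancy between the second and third formulas, and $d_0 = \tfrac{3}{2}d$, at which the columns of \eqref{formula!weightmatrix} corresponding to $x_0$ and $x_1$ coincide and produce the extra $x_0 \leftrightarrow x_1$ mixing responsible for the first formula.
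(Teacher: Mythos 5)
Your proposal is correct and follows essentially the same route as the paper: the paper likewise invokes Cox's description of the automorphism group of a complete simplicial toric variety to write $\dim\Aut D_z=\sum_{\rho}h^0(D_z,D'_{\rho})-2$, and the homogeneous components $m_v$ of the Cox ring that you count are exactly these spaces $H^0(D_z,D'_\rho)$, with the same monomial bases and the same two thresholds at $d_0=d$ and $d_0=\tfrac32 d$. The arithmetic in all three cases checks out, so there is nothing to add.
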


\begin{proof}
	By \cite[\S 4]{Cox} and the relations among $D_{\rho}$ and $H$ in \S \ref{subsection: existence}, we have the formula
	\begin{align}
		\dim\Aut D_z & = \sum_{\rho \in \{ t_0,t_1,x_0,x_1,y\}} h^0(D_z, D_{\rho}|_{D_z})-2 \nonumber \\
		& = 2h^0(D_z, F_z) + h^0(D_z, (d - d_0)F_z + H_{D_z}) \label{eq: dimension AutDz} \\
		& \quad + h^0(D_z, (d_0 - 2d - N)F_z + H_{D_z}) + h^0(D_z, 2H_{D_z} - NF_z)-2. \nonumber
	\end{align}
	It is easy to decompose these vector spaces in terms of monomials on $D_z$ using the weight matrix \eqref{eq: matrix with good weights} as follows:
	\begin{align*}
		H^0(D_z,F_z) & = S^1(t_0,t_1), \\
		H^0(D_z, (d-d_0)F_z+H_{D_z}) & = \CC x_0\oplus S^{3d - 2d_0 + N}(t_0,t_1) x_1, \\
		H^0(D_z, (d_0-2d-N)F_z+H_{D_z}) & = S^{2d_0 - 3d - N}(t_0,t_1) x_0 \oplus \CC x_1, \\
		H^0(D_z, 2H_{D_z} - NF_z) & = S^{2d_0 - 2d - N}(t_0,t_1) x_0^2 \oplus S^{d}(t_0,t_1) x_0 x_1 \\
		& \quad \oplus S^{4d - 2d_0 + N}(t_0,t_1) x_1^2 \oplus \CC y.
	\end{align*}
	
	It is clear that 
	$$
	h^0(D_z, F_z)=2.
	$$ 
	For the second term, we have
	$$
	h^0(D_z, (d-d_0)F_z + H_{D_z}) = 3d - 2d_0 + N + 2.
	$$
	For the third term, we have
	$$
	h^0(D_z, (d_0 - 2d - N)F_z + H_{D_z}) = 
	\begin{cases}
		2, & \text{if } d_0 = \frac12 (3d + N);\\
		1, & \text{otherwise}.\\
	\end{cases}
	$$
	Finally, we have
	$$
	h^0(D_z, 2H_{D_z} - NF_z)=
	\begin{cases}
		3d+4, &\text{if } d_0 \ge d + \frac12 N;\\
		5d - 2d_0 + 3 + N, &\text{otherwise}. \\
	\end{cases}
	$$
	In fact, note first that both $d$ and $4d - 2d_0 + N$ are positive. If $d_0 \ge d + \frac 12 N$, then
	\begin{align*}
		h^0(D_z, 2H_{D_z} - NF_z) & = (2d_0 - 2d - N + 1) + (d+1) + \left(4d - 2d_0 + N + 1\right) + 1 \\
		& = 3d + 4.
	\end{align*}
	If $d_0 < d + \frac 12 N$, then $x_0^2$ does not appear, and thus 
	\begin{align*}
		h^0(D_z, 2H_{D_z} - NF_z) & = (d+1) + \left(4d - 2d_0 + N + 1\right) + 1 \\
		& = 5d - 2d_0 + N + 3.
	\end{align*}
	Combining the above computations with \eqref{eq: dimension AutDz} together, we get the following three cases:
	\begin{itemize}
		\item [(1)] If $d_0 = \frac12(3d + N)$, then
		$$
		\dim\Aut D_z=2\cdot 2 + 2 + 2 + (3d+4) - 2 = 3d+10.
		$$
			
		\item [(2)] If $d + \frac{1}{2}N \le d_0 < \frac12(3d + N)$, then
		\begin{align*}
			\dim\Aut D_z & = 2 \cdot 2 + (3d - 2d_0 + N + 2) + 1 + (3d+4) - 2 \\
			& = 6d - 2d_0 + 9 + N.
		\end{align*}
			
		\item [(3)] If $\frac14(d + N) \le d_0 < d + \frac{1}{2}N$, then
		\begin{align*}
			\dim\Aut D_z & = 2 \cdot 2 + (3d - 2d_0 + N + 2) + 1 + (5d - 2d_0 + N + 3) - 2 \\
			& = 8d - 4d_0 + 8 + 2N.
		\end{align*}
	\end{itemize}
	The proof is completed.
\end{proof}
	
Next we count parameters for the branch divisor $B$ in $D_z$, which is an element of $H^0(D_z, 10H_{D_z} - 4NF_z)$ of the form
$$
\sum_{a_0+a_1+2a_2 = 10}
c_{a_0,a_1,a_2}(t_0,t_1) x_0^{a_0} x_1^{a_1} y^{a_2}.
$$
Each monomial $x_0^{a_0} x_1^{a_1} y^{a_2}$ contributes by adding $1 + \deg c_{a_0,a_1,a_2}$ to the dimension $h^0(D_z, 10H_{D_z} - 4NF_z)$, unless $\deg c_{a_0,a_1,a_2} < 0$, in which case the contribution is zero. The formula for the degree of each $c_{a_0,a_1,a_2}(t_0,t_1)$ is in \eqref{eq: degree c}.

In the proof of Proposition \ref{prop: sing-X} (see Appendix \ref{appendix: sing}), we see that the negativity of the degree of $c_{a_0,a_1,a_2}$ depends on some functions of $d_0,d,N$. For fixed $d$ and $N$, we let $d_0$ decrease. As $d_0$ decreases, more and more monomials disappear, because their coefficients have negative degree.
We summarize the results of this analysis in the following tables. 
\begin{table}[ht]\caption{Vanishing monomials when $N = 0$} \label{tab: N=0}
	\begin{tabular}{ccc}
        \hline
		$d_0$ & monomials with vanishing coefficient & stratum\\
		\hline
		$<d$ & $x_0^{10},\ x_0^8y,\ x_0^6y^2,\ x_0^4y^3,\ x_0^2y^4$& terminal  \\
		$<\frac78d$ & $x_0^9x_1$ &$cA_1$\\
		$<\frac56d$ & $x_0^7x_1y$ &$cA_3$\\
		$<\frac34d$& $x_0^5x_1y^2$ &$cA_4$ \\
		$<\frac23d$ & $x_0^8x_1^2$ &$cD_6$\\
		$<\frac12d$& $x_0^6x_1^2y$,\ $x_0^3x_1y^3$ &$cE_8$ \\
	\end{tabular}
\end{table}

\begin{table}[ht] \caption{Vanishing monomials when $N = 1$ and $d \ge 3$ or when $N = 2$ and $d \ge 6$} \label{tab: N=1}
	\begin{tabular}{ccc}
        \hline
		$d_0$ & monomials with vanishing coefficient & stratum\\
		\hline
		$= d$ & $x_0^{10},\ x_0^8y,\ x_0^6y^2,\ x_0^4y^3$ & terminal \\
		$< d$ & $x_0^2y^4$ & terminal  \\
		$<\frac78d + \frac38N$ & $x_0^9x_1$ &$cA_1$\\
		$<\frac56d + \frac13N$ & $x_0^7x_1y$ &$cA_3$\\
		$<\frac34d + \frac14N$ & $x_0^5x_1y^2$ &$cA_4$ \\
		$<\frac23d + \frac13N$ & $x_0^8x_1^2$ &$cD_6$\\
		$<\frac12d + \frac14N$ & $x_0^6x_1^2y$ & $cE_7$ \\
		$<\frac12d$ & $x_0^3x_1y^3$ & $cE_8$ \\
	\end{tabular}
\end{table}

The last column reflects the type of singularities that the general $X(d, N; d_0)$ has, when $d_0$ approaches the upper bound in the first column (see Appendix \ref{appendix: sing} for details). When $d_0 \ge \frac14(d + N)$, all the other coefficients have non-negative degrees. We treat the remaining cases that are not covered by Tables \ref{tab: N=0} and \ref{tab: N=1} separately.

\begin{lem} \label{lem: dimension 10H}
	Suppose that $N \le 1$, $d \ge 3$ or that $N = 2$, $d \ge 6$. Then the vector space $H^0(D_z, 10H_{D_z} - 4NF_z)$ has dimension
    $$
    \begin{cases}
        125d + 36 + 36N, & \text{ if } d < d_0 \le \frac32d + \frac12N \text{ or } d_0=d, N=0;\\ 
		125d + 32 + 46N, & \text{ if } d_0 = d, N>0; \\
		155d - 30d_0 + 31 + 46N, & \text{ if } \frac78d + \frac38 N \le d_0 < d;\\ 
		162d - 38d_0 + 30 + 49N, & \text{ if } \frac56d + \frac13 N \le d_0 < \frac78d + \frac38 N;\\ 
		167d - 44d_0 + 29 + 51N, & \text{ if } \frac34d + \frac14 N \le d_0 < \frac56d + \frac13 N;\\ 
		170d - 48d_0 + 28 + 52N, & \text{ if } \frac23d + \frac13 N \le d_0 < \frac34d + \frac14 N;\\ 
		174d - 54d_0 + 27 + 54N, & \text{ if } \frac12d + \frac14 N \le d_0 < \frac23d + \frac13 N;\\
		176d - 58d_0 + 26 + 55N, & \text{ if } \frac12d \le d_0 < \frac12d + \frac14 N;\\
		177d - 60d_0 + 25 + 55N, & \text{ if } \frac14d + \frac14 N \le d_0 < \frac12d. 	 
    \end{cases}
    $$
\end{lem}
    
\begin{proof}
	We first observe that
	\begin{equation} \label{eq: decompose-10H}
		H^0(D_z,10H_{D_z}-4NF_z)=\bigoplus_{a_0+a_1+2a_2=10} S^{\deg c_{a_0,a_1,a_2}}(t_0,t_1)x_0^{a_0}x_1^{a_1}y^{a_2}.
	\end{equation}
    
    If $d < d_0 \le \frac12(3d + N)$ or if $d=d_0$ and $N=0$, then by Tables \ref{tab: N=0} and \ref{tab: N=1}, all the coefficients $c_{a_0,a_1,a_2}$ have non-negative degree. The number of monomials is 
	$$
	\sum_{a_2=0}^5 h^0 \left(\PP^1,\hol_{\PP^1}(10-2a_2)\right)=11+9+7+5+3+1=36.
	$$
    Thus
    \begin{align*}
    h^0(D_z,10H_{D_z}-4NF_z) & =  \sum_{\substack{a_0 + a_1 + 2a_2 = 10}} (1 + \deg c_{a_0,a_1,a_2}) \\
	& = 36 + \sum_{\substack{a_0 + a_1 + 2a_2 = 10}} \deg c_{a_0,a_1,a_2}.
    \end{align*}
    Now we replace $\deg c_{a_0,a_1,a_2}$ with its expression in \eqref{eq: degree c}. By symmetry,
    \[
    \sum_{\substack{a_0 + a_1 + 2a_2 = 10}} (a_1-a_0)=0,
    \] 
    and then
    \begin{align*}
        & \sum_{\substack{a_0 + a_1 + 2a_2 = 10}} \deg c_{a_0,a_1,a_2} = \sum \left( \frac{1}{2} (a_0 + a_1) d +N\right) \\
        = & \ \frac{1}{2}  \sum (a_0 + a_1)d  +36N = d \sum a_1 +36N= d \sum_{a_2=0}^5 \sum_{a_1=0}^{10-2a_2} a_1 + 36N \\
        = & \ d \left[ \binom{11}2 + \binom92+ \binom72+ \binom52+ \binom32 \right]+36N=125d+36N.
    \end{align*}
	This concludes the proof of the case when $d < d_0$ or $d=d_0$, $N=0$.
    
    If $d_0 = d$, $N > 0$ then the monomials $x_0^{10}$, $x_0^8y$, $x_0^6y^2$, $x_0^4y^3$ no longer appear in the equation of the branch divisor. Thus
	\begin{align*}
		h^0(D_z, 10H_{D_z} - 4NF_z) & = 125d + 36 + 36N -\sum_{k=0}^3(1 + \deg c_{10-2k,0,k})\\
		& = 125d + 36 + 36N - \left(4  - 10N \right) \\
		& = 125d  + 32 + 46N.
	\end{align*}

	If $\frac78d + \frac38 N \le d_0 < d$, then we lose the monomials 
    $x_0^{10}$, $x_0^8y$, $x_0^6y^2$, $x_0^4y^3$ and also $x_0^2y^4$. Thus
	\begin{align*}
		h^0(D_z, 10H_{D_z} - 4NF_z) & = 125d + 36 + 36N -\sum_{k=0}^4(1 + \deg c_{10-2k,0,k})\\
		& = 125d + 36 + 36N - \left(5 + 30(d_0-d) - 10N \right) \\
		& = 155d - 30d_0 + 31 + 46N.
	\end{align*}

	If $\frac56d + \frac13 N \le d_0 < \frac78d + \frac38 N$, then we also lose $x_0^9x_1$. Thus
	\begin{align*}
		h^0(D_z, 10H_{D_z} - 4NF_z) & = 155d - 30d_0 + 31 + 46N - (1 + \deg c_{9, 1, 0}) \\
		& = 162d - 38d_0 + 30 + 49N.
	\end{align*}
	
	If $\frac34d + \frac14 N \le d_0 < \frac56d + \frac13 N$, then we also lose $x_0^7x_1y$. Thus
	\begin{align*}
		h^0(D_z, 10H_{D_z} - 4NF_z) & = 162d - 38d_0 + 30 + 49N - (1 + \deg c_{7, 1, 1}) \\
		& = 167d - 44d_0 + 29 + 51N.
	\end{align*}
	
	If $\frac23d + \frac13 N \le d_0 < \frac34d + \frac14 N$, then we also lose $x_0^5x_1y^2$. Thus
	\begin{align*}
		h^0(D_z, 10H_{D_z} - 4NF_z) & = 167d - 44d_0 + 29 + 51N - (1 + \deg c_{5, 1, 2}) \\
		& = 170d - 48d_0 + 28 + 52N.
	\end{align*}
	
	If $\frac12d + \frac14 N \le d_0 < \frac23d + \frac13 N$, then we also lose $x_0^8x_1^2$. Thus
	\begin{align*}
		h^0(D_z, 10H_{D_z} - 4NF_z) & = 170d - 48d_0 + 28 + 52N - (1 + \deg c_{8, 2, 0}) \\
		& = 174d - 54d_0 + 27 + 54N.
	\end{align*}
	
	If $\frac12d \le d_0 < \frac12d + \frac14 N$, then we also lose $x_0^6x_1^2y$. Thus
	\begin{align*}
		h^0(D_z, 10H_{D_z} - 4NF_z) & = 174d - 54d_0 + 27 + 54N - (1 + \deg c_{6, 2, 1}) \\
		& = 176d - 58d_0 + 26 + 55N.
	\end{align*}
	
	If $\frac14(d + N) \le d_0 < \frac12d$, then we also lose $x_0^3x_1y^3$. Thus
	\begin{align*}
		h^0(D_z, 10H_{D_z} - 4NF_z) & = 176d - 58d_0 + 26 + 55N - (1 + \deg c_{3, 1, 3}) \\
		& = 177d - 60d_0 + 25 + 55N.
	\end{align*}
	This concludes the proof.
\end{proof}

\begin{lem}\label{lem: dimension 10H N=1 1}
  Suppose that $N=1$ and $d=2$. Then the vector space $H^0(D_z, 10H_{D_z} - 4F_z)$ has dimension
	$$
	h^0(D_z, 10H_{D_z} - 4F_z)=
	\begin{cases}
		322, & \text{ if } d_0 = 3;\\ 
		328, & \text{ if } d_0 = 2.\\
 	\end{cases}
	$$
\end{lem}

\begin{proof}
    Since $d_0 \leq \frac{3}{2}d + \frac{1}{2}N = \frac{7}{2}$, we have $d_0 \leq 3$. When $d_0 = 3$, all the coefficients $c_{a_0, a_1, a_2}$ have non-negative degree. By the same argument as in the proof of Lemma \ref{lem: dimension 10H} for $N = 1$, we have
    $$
    h^0(D_z, 10H_{D_z} - 4F_z) = 125d + 36 + 36 = 322.
    $$
    When $d_0=2$, the monomials $x_0^{10}$, $x_0^8y$, $x_0^6y^2$, $x_0^4y^3$, $x_0^9x_1$ do not appear in the equation of the branch divisor. Hence we amend the result of Lemma \ref{lem: dimension 10H} ($N=1$, $d_0=d$) to compensate for the extra missing monomial $x_0^9x_1$:
    \begin{align*}
         h^0(D_z, 10H_{D_z} - 4F_z) & =125d + 32 + 46 - (1 + \deg c_{9,1,0}) \\ 
         & = 125d + 32 + 46 - (1 + 8d_0 - 7d - 3N) \\
         & = 328.
    \end{align*}
    The proof is completed.
\end{proof}

\begin{lem} \label{lem: dimension 10H N=2}
	Suppose that $N = 2$ and $d = 2, 3, 4$ or $5$. Then the dimension of the vector space $H^0(D_z,10H_{D_z}-8F_z)$, as a function of $d_0$ are those given in the following table. \begin{table}[ht]\caption{$h^0(D_z,10H_{D_z}-8F_z)$ for $N = 2$ and small $d$}\label{table: h^0(D_z,10H_{D_z}-8F_z}
		\begin{tabular}[t]{cc}
			\multicolumn{2}{c}{$d=2$} \\
			\hline
			$d_0$ & $h^0$ \\
			\hline
			$3,4$ &  $358$ \\
			$2$ &  $378$\\
			\phantom{$2$}
		\end{tabular}
		\quad\quad
		\begin{tabular}[t]{cc}
			\multicolumn{2}{c}{$d=3$} \\
			\hline
			$d_0$ & $h^0$ \\
			\hline
			$4,5$ & $483$ \\
			$3$ & $501$ \\
			$2$ & $549$ \\
		\end{tabular}
		\quad\quad
		\begin{tabular}[t]{cc}
			\multicolumn{2}{c}{$d=4$} \\
			\hline
			$d_0$ & $h^0$ \\
			\hline
			$5,6,7$ & $608$ \\
			$4$ & $625$ \\
			$3$ & $669$ \\
			$2$ & $724$ \\
			\phantom{$1$}
		\end{tabular}
		\quad\quad
		\begin{tabular}[t]{cc}
			\multicolumn{2}{c}{$d=5$} \\
			\hline
			$d_0$ & $h^0$ \\
			\hline
			$6,7,8$ & $733$ \\
			$5$ & $749$ \\
			$4$ & $790$ \\
			$3$ & $843$ \\
			$2$ & $900$
		\end{tabular}
	\end{table}
\end{lem}

\begin{proof}
	The proof is similar to that of Lemma \ref{lem: dimension 10H}. We only give a sketch here. If $d < d_0 \le \frac12(3d + N)$, then all the coefficients $c_{a_0,a_1,a_2}$ have non-negative degree. Thus we have
	$$
	h^0(D_z, 10H_{D_z} - 8F_z) = 125d + 36 + 36N = 125d + 108.
	$$

	If $d_0 = d \ge 4$, then the monomials $x_0^{10}$, $x_0^8y$, $x_0^6y^2$, $x_0^4y^3$, $x_0^9x_1$ no longer appear in the equation of the branch divisor. Thus
	$$
	h^0(D_z, 10H_{D_z} - 8F_z) = 160d - 36d_0 + 31 + 49N = 124d + 129.
	$$
    If $d_0 = d \le 3$, then the monomial $x_0^7x_1y$ also no longer appears in the equation of the branch divisor. Thus
    $$
    h^0(D_z, 10H_{D_z} - 8F_z) = 123d + 30 + 51N = 123d + 132.
    $$
    
    Now we are left with the cases with $d_0 < d$, for $d=3,4,5$. We treat each value of $d$ separately.
    
    \noindent\textbf{Case $d=5$}.
    If $d_0 = 4$, then we also lose $x_0^2y^4$, $x_0^7x_1y$ and $x_0^5x_1y^2$. Thus
	$$
	h^0(D_z, 10H_{D_z} - 8F_z) = 170d - 48d_0 + 28 + 52N = 170d - 60.
	$$
	If $d_0 = 3$, then we lose $x_0^8x_1^2$. Thus
	$$
	h^0(D_z, 10H_{D_z} - 8F_z) = 174d - 54d_0 + 27 + 54N = 174d - 27.
	$$
	If $d_0 = 2$, then we also lose $x_0^6x_1^2y$ and $x_0^3x_1y^3$. Thus
	$$
	h^0(D_z, 10H_{D_z} - 8F_z) = 177d - 60d_0 + 25 + 55N = 177d + 15.
	$$
    
    \noindent\textbf{Case $d=4$}. 
    If $d_0 = 3$, then we lose $x_0^2y^4$, $x_0^7x_1y$, $x_0^5x_1y^2$ and $x_0^8x_1^2$. Thus
	$$
	h^0(D_z, 10H_{D_z} - 8F_z) = 174d - 54d_0 + 27 + 54N = 174d - 27.
	$$
	If $d_0=2$, then we also lose $x_0^6x_1^2y$. Thus 
    $$
    h^0(D_z, 10H_{D_z} - 8F_z) = 176d - 58d_0 + 26 + 55N = 176d + 20.
    $$
    
    \noindent\textbf{Case $d=3$}. Then $d_0 = 2$, and we lose $x_0^2y^4$, $x_0^5x_1y^2$ and $x_0^8x_1^2$.
    Thus 
    $$
    h^0(D_z, 10H_{D_z} - 8F_z) = 174d - 54d_0 + 27 + 54N = 174d + 27.
    $$
    This concludes the proof.  
\end{proof}

Using the dimensions of $H^0(D_z, 10H_{D_z}-4NF_z)$ and $\Aut D_z$ computed by the preceding lemmas and the formula \eqref{eq: dimension formula}, we get

\begin{prop} \label{prop: dimension moduli} 
	Suppose that $N \le 1$, $d \ge 3$ or that $N = 2$, $d \ge 6$. Then the modular family $\M^N_d(d_0)$ is unirational and its dimension $\Delta^N_d(d_0)$ equals
    $$
    \begin{cases}
        122d + 25 + 36N, & \text{ if } d_0 = \frac32d + \frac12N;\\ 
		119d + 2d_0 + 26 + 35N, & \text{ if } d < d_0 < \frac32d + \frac12N \text{ or } d_0=d, N=0;\\ 
		121d + 23 + 44N, & \text{ if } d_0 = d, N>0; \\
		147d - 26d_0 + 22 + 44N, & \text{ if } \frac78d + \frac38 N \le d_0 < d;\\ 
		154d - 34d_0 + 21 + 47N, & \text{ if } \frac56d + \frac13 N \le d_0 < \frac78d + \frac38 N;\\ 
		159d - 40d_0 + 20 + 49N, & \text{ if } \frac34d + \frac14 N \le d_0 < \frac56d + \frac13 N;\\ 
		162d - 44d_0 + 19 + 50N, & \text{ if } \frac23d + \frac13 N \le d_0 < \frac34d + \frac14 N;\\ 
		166d - 50d_0 + 18 + 52N, & \text{ if } \frac12d + \frac14 N \le d_0 < \frac23d + \frac13 N;\\ 
		168d - 54d_0 + 17 + 53N, & \text{ if } \frac12d \le d_0 < \frac12d + \frac14 N;\\
		169d - 56d_0 + 16 + 53N, & \text{ if } \frac14d + \frac14 N \le d_0 < \frac12d. 
    \end{cases}
    $$
\end{prop}

\begin{prop} \label{prop: dimension moduli N=1 d=2} 
	Suppose that $N = 1$ and $d = 2$. Then the modular family $\M^N_d(d_0)$ is unirational and has dimension	
    \[
	\Delta^1_2(d_0) = 
	\begin{cases}
		305, & \text{ if } d_0 = 3;\\
		309, & \text{ if } d_0=2.\\ 
	\end{cases}
	\]
\end{prop}

\begin{prop} \label{prop: dimension moduli N=2} 
	Suppose that $N = 2$ and $d = 2, 3, 4$ or $5$. Then the modular family $\M^N_d(d_0)$ is unirational and has dimension listed in the following tables:
	
	\begin{tabular}[t]{cc}
		\multicolumn{2}{c}{$d=2$} \\
		\hline
		$d_0$ & $\Delta^N_d(d_0)$ \\
		\hline
		$4$ & $341$ \\
		$3$ & $340$\\
		$2$ & $357$\\
	\end{tabular}
	\quad\quad
	\begin{tabular}[t]{cc}
		\multicolumn{2}{c}{$d=3$} \\
		\hline
		$d_0$ & $\Delta^N_d(d_0)$ \\
		\hline
		$5$ & $463$ \\
		$4$ & $461$ \\
		$3$ & $476$ \\
		$2$ & $520$ \\
	\end{tabular}
	\quad\quad
	\begin{tabular}[t]{cc}
		\multicolumn{2}{c}{$d=4$} \\
		\hline
		$d_0$ & $\Delta^N_d(d_0)$ \\
		\hline
		$7$ & $585$ \\
		$6$ & $584$ \\
		$5$ & $582$ \\
		$4$ & $596$ \\
		$3$ & $636$ \\
		$2$ & $687$
	\end{tabular}
	\quad\quad
	\begin{tabular}[t]{cc}
		\multicolumn{2}{c}{$d=5$} \\
		\hline
		$d_0$ & $\Delta^N_d(d_0)$ \\
		\hline
		$8$ & $707$ \\
		$7$ & $705$ \\
		$6$ & $703$ \\
		$5$ & $716$ \\
		$4$ & $753$ \\
		$3$ & $802$ \\
		$2$ & $855$
	\end{tabular}
\end{prop}

In the above propositions, $d_0$ is assumed to be an integer, but it is natural to view $\Delta^N_d$ as a function in one real variable (see Figure \ref{figure!dimension-graph} on page \pageref{figure!dimension-graph} for an example). From this point of view, we have the following proposition.

\begin{prop} \label{prop: delta function}
	Suppose that $N \le 1$, $d \ge 3$ or that $N = 2$, $d \ge 6$. Then there exists a piecewise linear real-valued function 
	\[
	\Delta^N_d\colon \left[ \frac14d + \frac14N, \frac32d + \frac12N \right]  \rightarrow \RR
	\]
	whose component linear functions are given in Proposition \ref{prop: dimension moduli} such that
	\begin{itemize}
		\item [(i)] the set of discontinuities of $\Delta^N_d$ consists of the points $d_0 = \lambda_1 d + \lambda_2 N$, where the set of pairs $(\lambda_1, \lambda_2)$ is
		$$
		\left\{ \left(\frac12, 0\right), \left(\frac12, \frac14\right), \left(\frac23, \frac13\right), \left(\frac34, \frac14\right), \left(\frac56, \frac13\right), \left(\frac78, \frac38\right), \left(1, 0\right), \left(\frac32, \frac12\right) \right\};
		$$
		
		\item [(ii)] $\Delta^N_d$ is linear in each connected component of the domain of continuity;
		
		\item [(iii)] for each integer $d_0$ in the domain of $\Delta^N_d$, we have
		$$
		\dim \M^N_d(d_0) = \Delta^N_d(d_0).
		$$
	\end{itemize}
	Moreover,
	\begin{enumerate}
		\item the restriction of $\Delta^N_d$ to ${\left[\frac14d, d\right]} \cap \NN$ when $N = 0$, and to ${\left[\frac14d + \frac14 N, d + 1\right]} \cap \NN$ when $N > 0$, is strictly decreasing;
		
		\item the restriction of $\Delta^N_d$ to ${\left[d, \frac32d\right]} \cap \NN$ when $N = 0$, and to ${\left[d + 1, \frac32d + \frac12N\right]} \cap \NN$ when $N > 0$, is strictly increasing;
		\item we have 
		$$
		\Delta^N_d\left(\frac32d + \frac12 N\right) = \Delta^N_d \left( \frac{25d - 3 + 8N}{26} \right)
		$$
		when $N = 0$, or when $N = 1$ and $d > 5$, or when $N = 2$ and $d > 13$. 
	\end{enumerate}
\end{prop}

\begin{proof}
	Statements (ii) and (iii) just follow from the definition of the function $\Delta^N_d$. For (i), we only prove the case when $N = 0$ and leave the case $N > 0$ to the interested reader. When $N = 0$, we do not need to consider $\lambda_2$, and the discontinuity result just follows from the table below:
	\[
    \renewcommand{\arraystretch}{1.2}
	\begin{array}{|c|ccccccc|}
		\hline
		\lambda_1 & \frac12 &\frac23&\frac34&\frac56&\frac78&1&\frac32\\
		\hline 
		\Delta^0_d(\lambda_1 d) - \lim_{x \rightarrow {\lambda_1 d}^-}\Delta^0_d(x) & 2 &1&1&1&1&4&-1\\
		\hline
	\end{array}
	\]
	
	We emphasize here that both monotonicity statements (1) and (2) do not concern the function $\Delta^N_d$ as a whole, but only its restriction to the natural numbers. Indeed, such statements do not generalize to the whole function $\Delta^N_d$, exactly because of the points of discontinuity. Again, we only prove the case when $N = 0$ and leave the $N > 0$ case to the interested reader.
	
	To prove (2), we only need to check the case when $\frac32d$ is an integer, i.e., $d$ is even. In this case, $\Delta^0_d(\frac32 d) - \Delta^0_d(\frac32 d-1) = 1$. Thus the statement follows.
	
	To prove (1), note that from the definition, the slope of $\Delta^0_d$ is at least $-26$ for $d_0\le d$. Thus using the above discontinuity table, for any $x \in \RR$ with $\frac14 d \le x - 1 < x \le d$, we always have 
	$$
	\Delta^0_d(x - 1) - \Delta^0_d(x) \ge 26 - (2 + 1 + 1 + 1 + 1 + 4) = 16.
	$$
	Hence statement (1) follows.
	
	Finally, we prove (3). Indeed, when $N = 0$, we have $\frac78 d \le \frac{25d - 3}{26} < d$. Thus
	$$
	\Delta^N_d \left( \frac{25d - 3}{26} \right) = 147d - 26 \cdot \frac{25d - 3}{26} + 22 = 122d + 25.
	$$
	When $N = 1$ and $d > 5$ or when $N = 2$ and $d > 13$, we have $ \frac{25d - 3 + 8N}{26} < d$. Thus, as $\frac78d + \frac38 N \le \frac{25d - 3 + 8N}{26} $,
	\begin{align*}
		\Delta^N_d \left( \frac{25d - 3 + 8N}{26} \right) & = 147d - 26 \cdot \frac{25d - 3 + 8N}{26} + 22 + 44N \\
		& = 122d + 25 + 36N.
	\end{align*}
	The proof is completed.
\end{proof}

\subsection{The moduli space $\M_{K^3, p_g}$}
We can now prove the description of the moduli space of threefolds on the refined Noether line with $p_g \geq 5$.

Write $V^N_{d}(d_0) = \Phi^N_{d, d_0}(\M^N_{d}(d_0))$. Since $\Phi^N_{d, d_0}$ is always finite-to-one, we have $\dim V^N_{d}(d_0) = \Delta^N_d (d_0)$. Recall that $d$ is a deformation invariant, so if the closures of $V^N_d(d_0)$ and $V^N_{d'}(d'_0)$ intersect, then $d = d'$.

We have the following theorem when $N = 0$.

\begin{thm} \label{thm: moduli N=0}
	For each $d \ge 3$, the moduli space $\M_{K^3, p_g}$ of the canonical threefolds with $p_g = 3d-2$ and $K^3=4d-6$ stratifies as the disjoint union of the unirational strata $V^0_d(d_0)$, where $d_0 \in \NN$ and $\max \left\{ \frac14 d, 2 \right\} \le d_0 \le \frac32 d$. Moreover,
	\begin{enumerate}
		\item $V^0_d\left( \left\lfloor \frac32 d \right\rfloor \right)$ is dense in an irreducible component of $\M_{K^3, p_g}$.
		\item If $d_0 \ge d$, then $V^0_d(d_0)$ is contained in the closure of $V^0_d \left(\left\lfloor \frac32 d \right\rfloor \right)$.
		\item If $d_0 \le \frac{25d-3}{26}$, then $V^0_d(d_0)$ is dense in an irreducible component of $\M_{K^3, p_g}$.
	\end{enumerate}
\end{thm}

\begin{proof}
	Since $d_0 \ge 2$, the unirational subvarieties $V^0_1(d_0)$ stratify $\M_{K^3, p_g}$. Part (1) is \cite[Proposition 2.2]{CP}. Part (2) has been proved in \cite[Proposition 2.2 and 2.4]{CP} borrowing a technique from \cite{Pignatelli}.
	
	It remains to prove (3). Arguing by contradiction, we assume the existence of an integer $d_0 \le \frac{25d-3}{26}$  such that $V^0_d (d_0)$ is contained in the closure of $V^0_d (d'_0) $ for some $d_0' \neq d_0$. In other words, for each $X = X(d, 0; d_0)$ we have a flat family $\mathcal {X} \rightarrow \Lambda$ over a small open disc $\Lambda$ with central fibre $X$ and general fibre of type $(d, 0; d'_0)$.
	
	We claim that $d'_0 > d_0$. In fact, by Proposition \ref{prop: canonical image}, the canonical image of $X$ is birationally a Hirzebruch surface $\FF_{3d-2d_0}$. It follows that the relative canonical sheaf $\omega_{{\mathcal X}/\Lambda}$ induces a rational map ${\mathcal X}/\Lambda \dashrightarrow {\mathcal F}/\Lambda$, where ${\mathcal F}/\Lambda$ is a flat family of Hirzebruch surfaces, with central fibre isomorphic to ${\mathbb F}_{3d-2d_0}$ and general fibre isomorphic to ${\mathbb F}_{3d-2d'_0}$. This implies that $d_0' > d_0$.
	
	On the other hand, if $V^0_d(d_0)$ is contained in the closure of $V^0_d(d'_0)$, then $\Delta^0_d(d_0) < \Delta^0_d(d'_0)$, which by Proposition \ref{prop: delta function} implies $d_0' < d_0$, a contradiction. This completes the proof.
\end{proof}

We have the following theorem when $N = 1$.

\begin{thm} \label{thm: moduli N=1}
	For each $d \ge 2$, the moduli space $\M_{K^3, p_g}$ of the canonical threefolds with $p_g = 3d - 1$ and $K^3 = 4d - 6 + \frac16$ stratifies as the disjoint union of the unirational strata $V^1_d(d_0)$, where $d_0 \in \NN$ and $\max \left\{ \frac14 d + \frac14, 2 \right\} \le d_0 \le \frac32 d + \frac12$. Moreover,
	\begin{enumerate}
		\item $V^1_d\left( \left\lfloor \frac32 d + \frac12 \right\rfloor \right)$ is dense in an irreducible component of $\M_{K^3, p_g}$.
		\item If $d_0 \ge d + 1$, then $V^1_d(d_0)$ is contained in the closure of $V^1_d \left(\left\lfloor \frac32 d + \frac12 \right\rfloor \right)$.
		\item If $d > 6$ and $d_0 \le \frac{25d + 5}{26}$ or if $d_0 \le d \le 6$, then $V^1_d(d_0)$ is dense in an irreducible component of $\M_{K^3, p_g}$.
	\end{enumerate}
\end{thm}

\begin{proof}
	The proofs of Part (1) and (2) are identical to those of Theorem \ref{thm: moduli N=0}. For (3), if $d > 6$, using Proposition \ref{prop: delta function}, we may apply the same argument in the proof of Theorem \ref{thm: moduli N=0} here. If $d_0 \le d \le 6$, by Proposition \ref{prop: delta function} for $d \ge 3$ and Proposition \ref{prop: dimension moduli N=1 d=2} for $d = 2$, we always have $\Delta^1_d(d_0) \ge \Delta^1_d(d) > \Delta^1_d \left(\left\lfloor \frac32 d + \frac12 \right\rfloor \right)$. Thus the proof of Theorem \ref{thm: moduli N=0} also applies here.
\end{proof}

The same argument gives the following theorem when $N = 2$.

\begin{thm} \label{thm: moduli N=2}
	For each $d \ge 2$, the moduli space $\M_{K^3, p_g}$ of the canonical threefolds with $p_g = 3d$ and $K^3 = 4d - 3$ stratifies as the disjoint union of the unirational strata $V^2_d(d_0)$, where $d_0 \in \NN$ and $\max \left\{ \frac14 d + \frac12, 2 \right\} \le d_0 \le \frac32 d + 1$. Moreover,
	\begin{enumerate}
		\item $V^2_d\left( \left\lfloor \frac32 d \right\rfloor + 1 \right)$ is dense in an irreducible component of $\M_{K^3, p_g}$.
		\item If $d_0 \ge d + 1$, then $V^2_d(d_0)$ is contained in the closure of $V^2_d \left(\left\lfloor \frac32 d\right\rfloor + 1 \right)$.
		\item If $d > 14$ and $d_0 \le \frac{25d + 14}{26}$ or if $d_0 \le d \le 14$, then $V^1_d(d_0)$ is dense in an irreducible component of $\M_{K^3, p_g}$.
	\end{enumerate}
\end{thm}

Now we are ready to prove the main theorems of the paper.

\begin{proof}[Proof of Theorem \ref{thm: main1}]
    Let $\M_{K^3, p_g}$ be the coarse moduli space parameterizing all canonical threefolds on the refined Noether line with geometric genus $p_g \ge 13$. Write $N = 6K^3 - 8p_g + 20 \in \{0, 1, 2\}$. Then $d = \frac13 (p_g + 2 - N) \ge 5$.
    
    By Theorem \ref{thm: moduli N=0} for $N = 0$ as well as Theorem \ref{thm: moduli N=1} and \ref{thm: moduli N=2} for $N = 1, 2$, when $N = 0$ (resp.~$N = 1, 2$), all $X(d, N; d_0)$ with $d_0 \geq d$ (resp.~$d_0 \ge d + 1$) are in a single irreducible component, while the others may each be a different component. Note that all possible irreducible components are unirational. 
    
    Thus when $N = 0$ (resp.~$N = 1, 2$), an upper bound for the number $\nu_{p_g}$ of irreducible components of $\M_{K^3, p_g}$ is given by the number of integers between $\frac{d}4$ and $d$ (resp.~$\frac{d}4 + \frac N4$ and $d + 1$). This number is $\left \lfloor \frac34 d + 1\right \rfloor=\left \lfloor \frac{p_g+6}4\right \rfloor$ if $N=0$, $\left \lfloor \frac34 d + \frac74\right \rfloor = \left \lfloor \frac{p_g+8}4\right \rfloor$ if $N=1$, and $\left \lfloor \frac34 d + \frac32\right \rfloor = \left\lfloor \frac{p_g+6}4\right \rfloor$ if $N=2$.
    
    Similarly, a lower bound of $\nu_{p_g}$ is obtained by removing all integers lying in the interval $\left(\frac{25d - 3}{26}, d\right)$ when $N = 0$, (resp.~$\left(\frac{25d - 3 + 8N}{26}, d + 1\right)$ when $N = 1, 2$). To sum up, 
    \begin{itemize}
        \item if $N = 0$, then $\left \lfloor \frac34 d + 1\right \rfloor - \left \lfloor \frac{d + 2}{26}\right \rfloor \le \nu_{p_g} \le \left \lfloor \frac34 d + 1\right \rfloor$;

        \item if $N = 1$, then $\left \lfloor \frac34 d + \frac74\right \rfloor - \left \lfloor \frac{d + 20}{26}\right \rfloor \le \nu_{p_g} \le \left \lfloor \frac34 d + \frac74\right \rfloor$;

        \item if $N = 2$, then $\left \lfloor \frac34 d + \frac32\right \rfloor - \left \lfloor \frac{d + 12}{26}\right \rfloor \le \nu_{p_g} \le \left \lfloor \frac34 d + \frac32\right \rfloor$.
    \end{itemize}
    Thus Theorem \ref{thm: main1} (1) and (2) are proved.

    To prove the dimension formula, note that by Proposition \ref{prop: delta function} and Proposition \ref{prop: dimension moduli N=2} for $N = 2$ and $d = 5$, the stratum $V^N_d(d_0)$ with the maximal dimension is the one with $d_0 = \left \lceil \frac{d + N}4\right \rceil$. Thus 
    $$
    \dim \M_{K^3, p_g} = 169d - 56 \left \lceil \frac{d + N}4\right \rceil + 16 + 53N.
    $$
    The proof is completed.
\end{proof}

\begin{proof} [Proof of Theorem \ref{thm: main2}]
    Let $\M_{K^3, p_g}$ be the coarse moduli space parameterizing all canonical threefolds on the refined Noether line with geometric genus $5 \le p_g \le 12$. Let $N = 6K^3 - 8p_g + 20 \in \{0, 1, 2\}$. Then $d = \frac{1}{3} (p_g + 2 - N) \le 4$.

    Let $\nu_{p_g}$ denote the number of irreducible components of $\M_{K^3, p_g}$. If $N = 0$, by Theorem \ref{thm: moduli N=0}, $\nu_{p_g} = d - 2$. If $N = 1$ or $2$, by Theorem \ref{thm: moduli N=1}  or \ref{thm: moduli N=2}, we always have $\nu_{p_g} = d - 1$. Thus Theorem \ref{thm: main2} (1) is proved.

    For the dimension of each irreducible component, if $N = 0$, then $d = 3, 4$, and the irreducible components of the corresponding moduli space $\M_{K^3, p_g}$ are $V_d^0(2)$, $\cdots$, $V_d^0(d - 1)$ and $V^1_d\left( \left\lfloor \frac32 d\right\rfloor \right)$, whose dimensions are computed in Proposition \ref{prop: dimension moduli}.
    If $N = 1$, then $d =2, 3, 4$, and the irreducible components of the corresponding moduli space $\M_{K^3, p_g}$ are $V_d^1(2),\dots,V_d^1(d)$ and $V^1_d\left( \left\lfloor \frac32 d + \frac12 \right\rfloor \right)$, whose dimensions are computed in Proposition \ref{prop: dimension moduli N=1 d=2} and \ref{prop: dimension moduli}. If $N = 2$, then $d = 2, 3, 4$, and the irreducible components of the corresponding moduli space $\M_{K^3, p_g}$ are $V_d^2(2),\dots,V_d^2(d)$ and $V^2_d\left( \left\lfloor \frac32 d \right\rfloor + 1 \right)$, whose dimensions are computed in Proposition \ref{prop: dimension moduli N=2}. Thus Theorem \ref{thm: main2} (2) is proved.
\end{proof}

\begin{remark}
	Though the moduli space of canonical surfaces on the Noether line (i.e., $K^2 = 2p_g - 4$) has at most two irreducible components, recently Rana and Rollenske \cite{RR} studied the moduli space of stable surfaces of general type on the Noether line, also obtaining several components.
\end{remark}

\begin{figure}[ht]
\begin{tikzpicture}[scale=0.7,xscale=0.4,yscale=0.1,domain=36:390] 
    \node at (36,270-5) {$\frac32$};
    \node at (24,270-5) {$1$};
    \node at (21,270-5) {$\frac78$};
    \node at (20,270-5) {$\frac56$};
    \node at (18,270-5) {$\frac34$};
    \node at (16,270-5) {$\frac23$};
    \node at (12,270-5) {$\tfrac12$};
    \node at (6,270-5) {$\tfrac14$};
    \draw [decorate,decoration={brace,amplitude=5pt}]
     (6,270) -- (22.3,270) node[midway,yshift=1em]{\tiny not smoothable};
    \draw [decorate,decoration={brace,amplitude=5pt}]
     (24,270) -- (36,270) node[midway,yshift=1em]{\tiny nonsingular};
    \draw [decorate,decoration={brace,amplitude=5pt}]
     (22.3,270) -- (24,270) node[midway,yshift=1em]{\tiny ?};

    \draw[->] (6,270) -- (38,270) node[right] {$\tfrac{d_0}{d}$}; 
    \draw[->] (6,270) -- (6,390) node[above] {$\Delta^0_{24}$};
    \draw[color=black] (6,380) -- (12,340) node[pos=0.6,above=10] {$\scriptstyle cE_8$};
    \draw[color=black] (12,342) -- (16,318) node[pos=0.7,above=10] {$\scriptstyle cD_6$};
    \draw[color=black] (16,320) -- (18,310) node[pos=0.8,above=5] {$\scriptstyle cA_4$};
    \draw[color=black] (18,311.5) -- (20,302.5) node[pos=0.8,above=5] {$\scriptstyle cA_3$}; 
    \draw[color=black] (20,303.25) -- (21,299.25);
    \draw[color=black] (21,300) -- (24,292);
    \draw[color=black] (24,293) -- (36,297);
    \draw[->] (18,298) to[bend right] (20,301);
    \node at (17,298) {$\scriptstyle cA_1$};
    \draw[->] (24,303) to[bend right] (22,300);
    \node at (26,303) {\tiny terminal};


    \node at (36,297) {$\scriptstyle *$};
    \node at (35,296.66) {$\scriptstyle *$};
    \node at (34,296.33) {$\scriptstyle *$};
    \node at (33,296) {$\scriptstyle *$};
    \node at (32,295.66) {$\scriptstyle *$};
    \node at (31,295.33) {$\scriptstyle *$};
    \node at (30,295) {$\scriptstyle *$};
    \node at (29,294.66) {$\scriptstyle *$};
    \node at (28,294.33) {$\scriptstyle *$};
    \node at (27,294) {$\scriptstyle *$};
    \node at (26,293.66) {$\scriptstyle *$};
    \node at (25,293.33) {$\scriptstyle *$};
    \node at (24,293.0) {$\scriptstyle *$};
    \node at (23,294.66) {$\scriptstyle *$};
    \node at (22,297.33) {$\scriptstyle *$};
    \node at (21,300) {$\scriptstyle *$};
    \node at (20,303.25) {$\scriptstyle *$};
    \node at (19,307) {$\scriptstyle *$};
    \node at (18,311.5) {$\scriptstyle *$};
    \node at (17,315) {$\scriptstyle *$};
    \node at (16,320) {$\scriptstyle *$};
    \node at (15,324) {$\scriptstyle *$};
    \node at (14,330) {$\scriptstyle *$};
    \node at (13,336) {$\scriptstyle *$};
    \node at (12,342) {$\scriptstyle *$};
    \node at (11,346.66) {$\scriptstyle *$};
    \node at (10,353.33) {$\scriptstyle *$};
    \node at (9,360) {$\scriptstyle *$};
    \node at (8,366.66) {$\scriptstyle *$};
    \node at (7,373.33) {$\scriptstyle *$};
    \node at (6,380) {$\scriptstyle *$};
    \draw[dashed,color=black] (36,297) -- (22.4,297) node[below,pos=0] {};
    \draw[dashed,color=black] (22.3,270) -- (22.3,297) node[below,pos=0] {};
    \draw[->] (22.3,270-9) to (22.3,270-1);
    \node at (22.3,270-13) {$\scriptstyle \frac{597}{26}$};
  \end{tikzpicture}
\caption{Dimension of modular families for $d=24$, $N=0$}\label{figure!dimension-graph}
\end{figure}

\subsection{Final remark about the strata} \label{subsection: final}
The statement of Theorem \ref{thm: moduli N=0} does not say anything about the strata $V^0_d(d_0)$ with $\frac{25d-3}{26} < d_0 < d$, and there are $\left\lfloor \frac{d + 2}{26} \right\rfloor = \left\lfloor \frac{p_g+8}{78} \right\rfloor$ of them. For these strata, the argument in the proof of Theorem \ref{thm: moduli N=0} leaves two possibilities: either $V^0_d(d_0)$ is dense in an irreducible component of $\M_{K^3, p_g}$ or $V^0_d(d_0)$ is contained in the closure of $V^0_d\left( \left\lfloor \frac32 d \right\rfloor\right)$.

For numerical reasons, there is no such stratum when $p_g \le 69$. The case when $p_g = 70$ (thus $K^3 = 90$ and $d = 24$) is the first case in which we cannot decide if a certain stratum is dense in an irreducible component or not. As an illustration, the dimensions $\Delta^0_{24}(d_0)$ of the relevant strata $V^0_{24}(d_0)$ of the moduli space $\M_{90, 70}$ are given in Figure \ref{figure!dimension-graph}.

In this case, we do not know whether $V^0_{24}(23)$, which has dimension $2952$, is dense in an irreducible component of $\M_{90, 70}$, or lies in the boundary of $V^0_{24}(36)$ whose dimension is $2953$.

Note that similar phenomena occur for the strata $V_d^1(d_0)$ with $\frac{25d + 5}{26}< d_0 \le d$ and the strata $V_d^2(d_0)$ with $\frac{25d + 13}{26}< d_0 \le d$.

\section{(Non-) Simple fibrations in \texorpdfstring{$(1,2)$}{(1, 2)}-surfaces: more examples} \label{section: example}

In this section, we give more examples of simple and non-simple fibrations in $(1,2)$-surfaces. For simple fibrations, we will adopt the notation in \S \ref{section: toric 4fold}.

\subsection{Simple fibrations with $K_X$ not nef} \label{subsection: K non-nef}
Here we give a complete list of regular simple fibrations $X = X(d, N; d_0)$ with $d \ge 0$ whose canonical class is not nef. 

Given such an $X$, by Proposition \ref{prop: KX-nef}, we may assume that $\min\{d_0,d+\frac25N\} < 2$. By Proposition \ref{prop: sing-X}, $\frac{1}{4}(d + N) \le d_0 \le \frac{1}{2}(3d + N)$. Thus $d \le 4$, and we list all possibilities below:
\begin{gather*}
    X(0,0;0)^*,\ X(0,2;1)^*,\ X(0,3;1)^*,\ X(0,4;1),\ X(0,4;2)^*; \\
    X(1,0;1)^*,\ X(1,1;1),\ X(1,1;2)^*,\ X(1,2;1),\ X(1,2;2)^\dagger,\ X(1,3;1); \\
    X(2,0;1),\ X(2,1;1),\ X(2,2;1); \\
    X(3,0;1),\ X(3,1;1); \\
    X(4,0;1).
\end{gather*}
The ones which are not of general type are marked with an asterisk$^*$, and $X(1,2;2)$ is marked with a dagger$^\dagger$ because it has nef $K_X$ (see Example \ref{ex!X(1,2;2)}). The other ten are all $X(d,N;d_0)$ of general type with $K_X$ not nef, because they all contain the curve $\s_0$ and $(K_X \cdot \s_0) < 0$. We list the properties of the canonical model of each example in Table \ref{tab: K not nef}.

\renewcommand{\arraystretch}{1.2}
\begin{table}[ht] \caption{$X(d,N;d_0)$ of general type with $K_X$ not nef}\label{tab: K not nef}
    $$
    \begin{array}{ccccc}
    \hline
    X(d,N;d_0) & p_g(X) & P_2(X) & K_{X_{\textrm{can}}}^3 & \text{Singularities of }X_{\textrm{can}} \\
    \hline
    X(0,4;1) & 2 & 5 & \frac12 & 7 \times \frac12(1,1,1) \\
    X(1,1;1) & 2 & 4 & \frac13 & 2 \times \frac12(1,1,1),\ \frac13(1,2,2) \\
    X(1,2;1) & 3 & 8 & \frac43 & 4 \times \frac12(1,1,1),\ \frac13(1,2,2) \\
    X(1,3;1) & 4 & 12 & \frac83 & 4\times\frac12(1,1,1),\ 2\times\frac13(1,2,2) \\
    X(2,0;1) & 4 & 11 & \frac94 & 2\times\frac12(1,1,1),\ \frac14(1,3,3) \\
    X(2,1;1) & 5 & 15 & \frac{109}{30} & \frac12(1,1,1),\ \frac13(1,2,2),\ \frac15(2,3,4) \\
    X(2,2;1) & 6 & 19 & \frac{61}{12} & 3\times\frac12(1,1,1),\ \frac13(1,2,2),\ \frac14(1,3,3) \\
    X(3,0;1) & 7 & 22 & \frac{85}{14} & \frac12(1,1,1),\ \frac17(3,4,6) \\
    X(3,1;1) & 8 & 26 & \frac{151}{20} & \frac12(1,1,1),\ \frac14(1,3,3),\ \frac15(2,3,4) \\
    X(4,0;1) & 10 & 33 & \frac{301}{30} & \frac12(1,1,1),\ \frac13(1,2,2),\ \frac15(1,4,4) \\
    \hline
    \end{array}
    $$
\end{table}

Among the ten examples, the three with $N=0$ have appeared in \cite[Proposition 6.1]{CP}. We briefly explain the strategy of the calculation via the example $X(3,1;1)$. After a crepant blow-up, $X(3,1;1)$ has a curve of $cE_8$ singularities along $\s_0$. This curve is $K_X$-negative, and there is a non-terminal flip $X \dashrightarrow X^+$ which contracts $\s_0$. The extracted curve $s_0^+$ is a cuspidal rational curve, and $X^+$ has a $\frac15(2,3,4)$ singularity at the cusp, as well as a $\frac14(1,3,3)$ singularity at another point of $\s_0^+$. Then $K_{X^+}$ is ample so $X^+$ is the canonical model. To compute $K_{X^+}^3$, we combine the Riemann--Roch formula of \eqref{eq: Riemann-Roch P2} for $P_2(X)$ with $P_2(X)=h^0(X, 2H+2F)=26$ to get:
$$
K_{X^+}^3 = 2\left(26+3(1-8) - \frac{1\cdot1}{4} - \frac{3\cdot1}{8} - \frac{2 \cdot 3}{10} \right) = \frac{151}{20}.
$$

The case $X(1,1;1)$ is extra-special, because $K_{X^+}$ is not ample after the flip $X\dashrightarrow X^+$. The canonical model is obtained from a divisorial contraction $X^+\to X_{\frac13,2}$, where $X_{\frac13,2}$ is a general hypersurface $X_{16}\subset\PP(1,1,2,3,8)$ with $K^3=\frac13$, $p_g=2$ and singularities $2\times\frac12(1,1,1)$, $\frac13(1,2,2)$. Conversely, given a general $X_{\frac13,2}$, the  $(1,2,2)$-weighted blowup of the $\frac13(1,2,2)$ point gives $X^+$. Thus every $X_{\frac13,2}$ birationally admits a fibration in $(1,2)$-surfaces induced by the canonical pencil. Indeed, a computation shows that $\Delta_1^1(1)=204-14-1=189$. This agrees with the dimension of the moduli space $\M_{\frac13,2}$ computed in \cite{CHJ}.

Note that \cite[Theorem 4.7]{CHJ} shows that the volume of a threefold of general type with $p_g = 6$ and the canonical dimension one is at least $\frac{61}{12}$. The example $X(2,2;1)$ here shows that this bound is optimal (see \cite[Remark 4.8]{CHJ}).

\subsection{Simple fibrations whose associated fourfold is not toric} \label{subsection: N=5}
Here we show that the assumption on $N$ in Theorem \ref{thm: X is hypersurface} is optimal, by exhibiting a regular simple fibration in $(1,2)$-surfaces with $N = 5$ which is not isomorphic to a divisor in a toric fourfold.

Let $\FF$ be the toric fivefold with the weight matrix
$$
\begin{pmatrix}
	t_0 & t_1 & x_0 & x_1 & y_0 & y_1 & z \\
	1 & 1 & -1 & -a & -3 & -4 & -10 \\
	0 & 0 & 1 & 1 & 2 & 2 & 5
\end{pmatrix}
$$
where $a\ge7$ and the irrelevant ideal $(t_0, t_1) \cap (x_0, x_1, y_0, y_1, z)$. Then $\mathbb{F}$ is a $\PP(1, 1, 2, 2, 5)$-bundle over $\PP^1$. 

We are interested in a general complete intersection $X$ in $\mathbb{F}$ defined by two equations of bidegree $(-2, 2)$ and $(-20, 10)$, respectively. For simplicity, suppose that $X$ is general among those with equations of the form
\begin{equation} \label{eq!non-toric-example}
	\begin{split}
		t_0^2y_1 - t_1y_0 & = ex_0^2+e_{1,1}(t_0,t_1)x_0x_1+e_{0,2}(t_0,t_1)x_1^2, \\
		z^2 & = f_{a-7}(t_0,t_1)x_0x_1y_0^4 + y_1^5 + g_{10a-20}(t_0,t_1)x_1^{10},
	\end{split}
\end{equation}
where $e$ is a constant (later we assume that $e \ne 0$), and $e_{i,j}(t_0,t_1)$ are homogeneous of degrees 
$$
\deg e_{1,1}=a-1, \quad \deg e_{0,2}=2a-2.
$$

We claim that $f\colon X \to \PP^1$ is a regular simple fibration in $(1,2)$-surfaces. Indeed, over the open chart $U_0 = \{t_0 \ne 0\}$ of $\PP^1$, the corresponding chart of $\mathbb{F}$ is isomorphic to $U_0\times\mathbb{P}(1,1,2,2,5)$ with coordinates 
\[t'=t_1/t_0,\quad x_0'=t_0x_0,\quad x_1'=t_0^ax_1,\quad y_0'=t_0^3y_0,\quad y_1'=t_0^4y_1,\quad z'=t_0^{10}z.\] 
On this chart, the equations reduce to
\begin{align*}
    y_1' & = t'y_0' + ex_0'^2+e'_{1,1}(t')x'_0x'_1+e_{0,2}(t')x_1'^2 \\
    z'^2 &= f(t')x_0'x_1'y_0'^4+y_1'^5+g(t')x_1'^{10}
\end{align*}
Thus we can use the first equation to eliminate $y_1'$ and get a hypersurface in $U_0\times\PP(1,1,2,5)$ with equation
\[
z'^2=f(t')x_0'x_1'y_0'^4+(t'y_0'+ ex_0'^2+e'_{1,1}(t')x'_0x'_1+e_{0,2}(t')x_1'^2)^5 + g(t')x_1'^{10}.
\]
This is a simple fibration over $U_0$. Indeed, one can check that the only singularity is at the point $(0;0,0,1,0)$ on the fibre over $t'=0$. This is a terminal hyperquotient singularity of type $cA_4/(\mathbb{Z}/2)$ which has local analytic equation
\[(z'^2=x'_0x'_1+t'^5)\subset\frac12(1,1,1,0).\]
This singularity has a local $\mathbb{Q}$-smoothing to five quotient singularities of type $\frac12(1,1,1)$. A similar computation shows that the other chart is also a simple fibration, and that $X$ has no further singularities there.

We will now see that this example shows that the inequality $N\le 4$ in Theorem \ref{thm: X is hypersurface} is sharp, as in this case $N=5$ and the associated fourfold $\mathbf{F}(X)$ is not of the form $\FF(d, N; d_0)$.

Let $D_{x_0}$ be the torus invariant divisor $\{x_0 = 0\}$ on $\FF$. Let $F$ be a fibre of $f$. Set $H = D_{x_0} + F$. By a similar calculation as in \S \ref{subsection: KX}, we know that $K_X = \left((a - 6)F + H\right)|_X$. Moreover, we have
$$
p_g(X) = 3a-9, \quad K_X^3=4a - \frac{29}2.
$$
Thus by \eqref{eq: N}, $N = 6K_X^3 - 8p_g(X) + 20 = 5$.
From another point of view, we have explained above that $X$ has the equivalent of $5\times\frac12(1,1,1)$ singularities.

To show that $\mathbf{F}(X)$ is not of the form $\FF(d, N; d_0)$, reversing the argument at the beginning of the proof of Theorem \ref{thm: X is hypersurface}, by \cite[Example 3.16]{CP}, it is enough to show that the exact sequence \eqref{eq: multiplication exact sequence} does not split.

We then compute
\begin{align*}
	\mathcal{R}_1 & = f_* \omega_{X/\PP^1} = \mathcal{O}_{\PP^1}(a-3)\cdot x_0\oplus\mathcal{O}_{\PP^1}(2a-4)\cdot x_1, \\
	\mathcal{R}_2 & = f_* \omega_{X/\PP^1}^{[2]} = {\left(\Sym^2\mathcal{R}_1 \oplus \mathcal{O}_{\PP^1}(2a-5)\cdot y_0\oplus \mathcal{O}_{\PP^1}(2a-4)\cdot y_1\right)} / \mathcal{J}
\end{align*}
where $\mathcal{J}=\mathcal{O}_{\PP^1}(2a-6)\cdot(t_0^2y_1 - t_1y_0 - \left( ex_0^2+e_{1,1}(t_0,t_1)x_0x_1+e_{0,2}(t_0,t_1)x_1^2) \right)$. Therefore  
\[
\mathcal{E}_2=\mathcal{R}_2/\Sym^2\mathcal{R}_1  \cong {(\mathcal{O}_{\PP^1}(2a-5)\oplus\mathcal{O}_{\PP^1}(2a-4))} / {(t_1,-t_0^2)} \cong \mathcal{O}_{\PP^1}(2a-3).
\]

The exact sequence \eqref{eq: multiplication exact sequence} for $f$ becomes
\begin{equation*}
	0 \to \Sym^2\mathcal{R}_1 \to \mathcal{R}_2 \to \mathcal{E}_2={\mathcal{O}_{\PP^1}(2a-3)} \to 0
\end{equation*}
where the cokernel $\mathcal{E}_2=\mathcal{O}_{\PP^1}(2a-3)$ is generated by $y:=y_0/t_0^2=y_1/t_1$.
Dividing out by the graded ideal generated by $\mathcal{O}_{\mathbb{P}^1}(2a-4)x_1$ as in the proof of Lemma \ref{lem: epsilon_2 splits} gives the simplified exact sequence
\[0\to\Sym^2\mathcal{T}_1\to\mathcal{T}_2\to\E_2\to0\]
where $\Sym^2\mathcal{T}_1=\mathcal{O}_{\mathbb{P}^1}(2a-6)\cdot x_0^2$ and $\mathcal{T}_2$ has the following presentation as the cokernel of the map 
$\varphi=\left(e,\ t_1,\ -t_0^2\right)$:
\[0\to\mathcal{O}_{\mathbb{P}^1}(2a-6) \xrightarrow{\varphi} 
\mathcal{O}_{\mathbb{P}^1}(2a-6)\oplus
\mathcal{O}_{\mathbb{P}^1}(2a-5)\oplus
\mathcal{O}_{\mathbb{P}^1}(2a-4) \to \mathcal{T}_2.\]
Hence if $e\ne0$ then $\mathcal{T}_2=\mathcal{O}_{\mathbb{P}^1}(2a-5)\oplus
\mathcal{O}_{\mathbb{P}^1}(2a-4)$ and then the map $\mathcal{T}_2\to\E_2 =\mathcal{O}_{\PP^1}(2a-3)$ cannot have a right inverse (if $e=0$ the right inverse do in fact exist).

Therefore, if $e\ne0$ then also the exact sequence \eqref{eq: multiplication exact sequence} does not split, which implies that $X$ is not a divisor in a toric variety of the form $\FF(d, N; d_0)$. 

\begin{remark}
    There are families of non-toric simple fibrations for every $N\ge6$. In these cases, the threefolds can be constructed with $N$ distinct $\frac12(1,1,1)$ singularities instead of a $cA/(\mathbb{Z}/2)$ singularity. We do not know if it is possible to avoid the hyperquotient singularity when $N=5$.
\end{remark}

\subsection{Fibrations in \texorpdfstring{$(1,2)$}{(1,2)}-surfaces of index three}\label{subsection: index 3}

Here we use a similar toric method to produce a sequence of canonical threefolds of index three close to the Noether line that are fibred in $(1,2)$-surfaces. This answers a question posed to the third author by Jungkai Chen. 

Choose an integer $a \ge 1$ and define $\FF = \FF(a)$ to be the toric fivefold with weight matrix
\begin{equation}\label{formula!index-3-weightmatrix}
\begin{pmatrix}
    t_0 & t_1 & x_0 & x_1 & y &u & z \\
    1 & 1 & -a & -a & 0 &-1 & 0 \\
    0 & 0 &  1 & 1 & 2 & 3 & 5
\end{pmatrix}
\end{equation}
and irrelevant ideal $I=(t_0,t_1) \cap (x_0,x_1,y,u,z)$. Then $\FF$ admits a natural fibration $f: \FF \to \PP^1$ by the projection to the first two coordinates.

Let $D_{x_0}$ be the torus invariant divisor $\{x_0=0\}$ on $\FF$. Let $F$ be a fibre of $f$. Set $H = D_{x_0} + aF$. Then each of the {\it coordinates} $\rho \in \{t_0,t_1,x_0,x_1,y,u,z\}$ corresponds to a torus invariant irreducible Weil divisor $D_\rho$ in $\FF$ whose class is as follows:
\begin{gather*}
	D_{t_0}=D_{t_1}=F,\quad 
	D_{x_0}=D_{x_1}=H-aF,\\
	D_y=2H,\quad 
	D_u=3H-F,\quad 
	D_z=5H.
\end{gather*}
Note that $ D_y\cap D_u \cap D_z$ is a Hirzebruch surface $\FF_0$.

\begin{prop}\label{prop!omega_F}
We have $\omega_{\FF} \cong \hol_{\FF} (-12H+(2a-1)F)$.
\end{prop}

\begin{proof}
    We have $[K_{\FF}]=-[D_{t_0} + D_{t_1}+D_{x_0} + D_{x_1} +  D_y + D_z]$ by \cite[Thm 8.2.3]{CLS}.
\end{proof}

\begin{lem} \label{lem: intersection numbers for index three}
    The intersection numbers on $\FF(a)$ are
    $$
    (H^4 \cdot F) = \frac1{30}, \quad H^5=\frac{6a+1}{90}.
    $$
\end{lem}

\begin{proof}
    Since the intersection $D_{t_0} \cap D_{x_0} \cap D_y \cap D_u \cap D_z$ is a reduced smooth point, we have 
    $$
    (D_{t_0} \cdot D_{x_0} \cdot D_y \cdot D_u \cdot D_z)= 30 (H^4 \cdot F) = 1.
    $$
    Similarly, since $D_{x_0} \cap D_{x_1} \cap  D_y \cap D_u \cap D_z$ is empty, we have
    $$
    (D_{x_0} \cdot D_{x_1} \cdot D_y \cdot D_u \cdot D_z) =30 H^5 - 10 (6a+1) (H^4 \cdot F) =0.
    $$
    Rearranging and substituting $H^4F=\frac1{30}$ gives $H^5=\frac{6a+1}{90}$.
\end{proof}

Let $X \subset \FF(a)$ be a general complete intersection of two divisors whose respective classes are $3H$ and $10H$. By Bertini's theorem, $X$ is quasi-smooth. 

\begin{prop} \label{prop: index three}
    The threefold $X$ has a unique singular point $p$, a cyclic quotient singularity of type $\frac13(1,2,2)$. Moreover, $X$ is a canonical threefold of index three with
    $$
    p_g(X)=6a, \quad K^3_X=\frac43 p_g(X)-\frac83.
    $$
\end{prop}

\begin{proof}
    Since the two divisors are general, we may assume that their respective equations are, up to a coordinate change,
    $$
    t_0u +\ldots, \qquad z^2+y^5+\alpha_3(t_0,t_1)x_0u^3 + \ldots.
    $$
    Then it is clear that $X$ intersects the singular locus of $\FF$ just at the point $t_0=x_0=x_1=y=z=0$. Thus $X$ has a unique singular point. Using the above few monomials from the equations, since $\alpha_3$ is general, the singularity is of type $\frac13(1,2,2)$. It follows that the index of $X$ is three.

    By the adjunction, $K_X \sim (H + (2a-1)F)|_X$, which implies $p_g(X) = 6a$. In fact, a basis of $H^0(X, K_X)$ is given by the monomials $t_0^dt_1^{3a-1-d}x_j$. By Lemma \ref{lem: intersection numbers for index three}, we have
    \begin{align*}
        K_X^3 & =\left((3H) \cdot (10H) \cdot \left( H + (2a-1)F \right)^3 \right) =30 \left( H^5 + 3(2a-1)(H^4 \cdot F) \right) \\
        & =30 \left(\frac{6a+1}{90} + \frac{3(2a-1)}{30}\right) = 8a-\frac83.
    \end{align*}
    It is easy to check the ampleness of $K_X$ for $a\ge 1$ by the same method used in Lemma \ref{lem: nef and ample cone}.
\end{proof}

Consider the induced fibration $f_0 := f|_X\colon X \to \PP^1$. Then the general fibre $F_0$ is a $(3, 10)$-complete intersection in the weighted projective space $\PP(1,1,2,3,5)$, where the equation of degree $3$ may be used to eliminate the variable of degree $3$. Thus $F_0$ is a canonical $(1,2)$-surface. However, $f_0$ is not a simple fibration. Otherwise, by Proposition \ref{prop: index three}, it would be a simple fibration with $N = 4$, and the residue class of $p_g(X)$ would be $2$ modulo $3$. A contradiction. 

In fact, we have the following more general result about the uniqueness of fibrations in $(1,2)$-surfaces:

\begin{prop} \label{prop: uniqueness of fibration}
    Let $X$ be a threefold with canonical singularities and $p_g(X) \ge 5$. 
    Suppose further that $f_0 \colon X \to \PP^1$ is a fibration in $(1,2)$-surfaces in its relative canonical model. If $\pi \colon X_1 \to X$ is any birational morphism such that $X_1$ is smooth and admits a fibration $f_1 \colon X_1 \to \PP^1$ in $(1, 2)$-surfaces, then $f_1 = f_0 \circ \pi$.
\end{prop}

\begin{proof}
    Let $F_1$ be a general fibre of $f_1$. Given any integer $n$, by tensoring the adjunction short exact sequence with $\mathcal{O}_{X_1}((1-n)F_1)$, we get
    \[0\to \mathcal{O}_{X_1}(K_{X_1}-nF_1) \to 
    \mathcal{O}_{X_1}(K_{X_1}+(1-n)F_1) \to 
    \mathcal{O}_{F_1}(K_{F_1}) \to 0.\]
    Taking the long exact sequence, we obtain 
    $$
    h^0(X, K_{X_1} - nF_1) \ge h^0(X, K_{X_1} + (1-n)F_1) - p_g(F_1).
    $$
    Combining the two inequalities for $n = 1$ and $2$ gives
    $$
    h^0(X_1, K_{X_1} - 2F_1) \ge p_g(X_1) - 2p_g(F_1) \ge 1.
    $$
    Thus $h^0(X, K_X - 2\pi_*F_1) \ge 1$ and $K_X - 2\pi_*F_1$ is effective. 
    
    Since the general fibre $F_0$ of $f_0$ is Gorenstein, we know that $K_{F_0}$ is an ample Cartier divisor. Restricting to $F_0$ and intersecting with $K_{F_0}$ gives $K_{F_0} \cdot (K_X|_{F_0} - 2\pi_*F_1|_{F_0}) \ge0$. 
    It follows that $1 = K_{F_0}^2 \ge 2 \left(K_{F_0} \cdot (\pi_*F_1)|_{F_0} \right)$. We deduce that $(F_0 \cdot \pi_*F_1) = 0$ as $1$-cycles, which implies that $f_1 = f_0 \circ \pi$.
\end{proof}

By Proposition \ref{prop: uniqueness of fibration}, any fibration in $(1, 2)$-surfaces over $\PP^1$ from a birational model of $X$, has $X$ as the relative canonical model. Thus the threefolds constructed in Proposition \ref{prop: index three} do not admit simple fibrations in $(1,2)$-surfaces, even birationally. In particular, this shows that Conjecture \ref{conj} fails for any $\varepsilon > \frac{2}{3}$.

\appendix

\section{On the existence of fibrations in \texorpdfstring{$(1, 2)$}{(1,2)}-surfaces} \label{section: A1}

The main purpose in this appendix is to show that threefolds on the refined Noether line with $p_g \ge 5$ birationally admit a fibration in $(1, 2)$-surfaces, which extends \cite[Proposition 2.1 and 4.6]{HZ} to the general case.

\begin{lem} \label{lem: p_g5}
   Let $X$ be a minimal threefold of general type satisfying one of the following conditions:
   \begin{enumerate}
       \item $p_g(X)=5$ and $K_X^3 < \frac{109}{30}$;
       \item $p_g(X)=6$ and $K_X^3 < \frac{61}{12}$.
   \end{enumerate}
   Then the canonical image $\Sigma\subseteq\PP^{p_g(X)-1}$ of $X$ is a non-degenerate surface of degree $p_g(X)-2$.  Moreover, there exists a minimal threefold $X_1$ birational to $X$ such that $X_1$ admits a fibration $f: X_1 \to \PP^1$ with general fibre $F_1$ a $(1,2)$-surface. 
\end{lem}

\begin{proof}
    Suppose that $X$ satisfies one of the above conditions. By \cite[Theorem 2.4]{Kobayashi} and \cite[Theorem 4.6]{CHJ}, the canonical image $\Sigma$ of $X$ is a surface. Since $\Sigma \subseteq \PP^{p_g(X)-1}$ is non-degenerate, we have $\deg \Sigma \geq p_g(X)-2$.

    The proof in the following is very similar to that of \cite[Proposition 2.1]{HZ}. However, for the reader's convenience, we present the proof in detail. Take a birational modification $\pi: X' \to X$ such that $X'$ is smooth projective and $|M| = \mathrm{Mov}|\rounddown{\pi^*K_X}|$ is base point free. Denote by $\phi_M: X' \to \Sigma$ the morphism induced by $|M|$. Let $X' \stackrel {\psi}\rightarrow \Sigma' \stackrel{\tau} \rightarrow \Sigma$ be the Stein factorization of $\phi_M$. Denote by $C$ a general fibre of $\psi$. By \cite[Theorem 4.1]{CCJ}, $C$ is a smooth curve of genus $2$.

    Let $S \in |M|$ be a general member. By Bertini's theorem, $S$ is a smooth surface of general type and we have
    $$
    M|_S \equiv dC,
    $$
    where $d = (\deg\tau) \cdot (\deg\Sigma)$. Denote by $\sigma: S \to S_0$ the contraction onto its minimal model. 

    \textbf{Step 1.} In this step, we prove that $(\pi^*K_X \cdot C) \geq 1$, $\deg \Sigma = p_g(X) - 2$ and $\deg\tau=1$. 
    
    Note that we have $K_S \geq 2M|_S \equiv 2d C$. In particular, $S$ cannot be a $(1,2)$-surface. By \cite[Lemma 2.4]{CC}, we have $(\sigma^*K_{S_0} \cdot C)\geq 2$. By \cite[Corollary 2.3]{CCJ}, $2\pi^*K_X|_S-\sigma^*K_{S_0}$ is $\QQ$-effective. Thus $(\pi^*K_X \cdot C) \geq 1$.
    
    By the same argument as in the proof of \cite[Theorem 4.2]{CCJ}, we have
    $$
    K_X^3 \geq (\pi^*K_X|_S)^2 \geq \frac{2(2d-1)}{3}.
    $$
    If $p_g(X)=5$, then the assumption (1) implies that $d \le 3$. On the other hand, $d \ge \deg \Sigma \ge 3$. It follows that $\deg \tau = 1$ and $\deg \Sigma = d = 3$. If $p_g(X) = 6$, by the assumption (2), we deduce similarly that $\deg \tau = 1$ and $\deg \Sigma = d = 4$.

    \textbf{Step 2.} In this step, we prove that $\Sigma$ cannot be a Veronese surface. 

    Suppose that $\Sigma \subseteq \PP^{p_g(X)-1}$ is a Veronese surface. Then the only possibility is that $p_g(X)=6$, $\Sigma \cong \PP^2$, and the embedding $\Sigma \subseteq \PP^5$ is induced by the linear system $|2H|$, where $H$ is a line on $\PP^2$. 

    Let $S_H \in \psi^*|H|$ be a general member. By Bertini's theorem again, $S_H$ is a smooth surface of general type. Denote by $\sigma_H: S_H \to S_{H,0}$ the contraction onto its minimal model. Note that we have $\pi^*K_X \geq 2S_H$. By the adjunction formula, we have
    $$
    K_{S_H} = (K_{X'}+S_H)|_{S_H} \geq 3 S_H|_{S_H} \equiv 3C.
    $$
    Thus $S_H$ cannot be a $(1, 2)$-surface. By \cite[Lemma 2.4]{CC}, again we have $(\sigma_H^* K_{S_{H,0}} \cdot C) \geq 2$. By \cite[Corollary 2.3]{CCJ} (take $\lambda=\frac{1}{2}$, $D=K_X$ and $S=S_H$), we have $\pi^*K_X|_{S_H}\geq \frac{2}{3}\sigma_H^*K_{S_{H,0}}$. It follows that $(\pi^*K_X\cdot C)\geq \frac{4}{3}$. We deduce that
    $$
    K_X^3 \geq \left((\pi^*K_X)|_S \right)^2 \geq ((\pi^*K_X)|_S \cdot S|_S) = d(\pi^*K_X \cdot C) \geq \frac{16}{3}>\frac{61}{12},
    $$
    which is a contradiction.

    \textbf{Step 3.} In this step, we construct a relatively minimal fibration from a birational model of $X$ to $\PP^1$.

    By \cite[\S 10]{Nagata}, \textbf{Step 1} and \textbf{Step 2}, there is a Hirzebruch surface $\mathbb{F}_e$ for some $e \ge 0$ and a morphism
    $$
	r: \mathbb{F}_e \to  \PP^{p_g(X) - 1}
	$$
    induced by the linear system $|\mathbf{s} + (e + k)\mathbf{l}|$ such that $\Sigma = r(\FF_e)$. Here $\mathbf{l}$ is a ruling of the natural fibration $p: \FF_e \to \PP^1$, $\mathbf{s}$ is a section of $p$ with $\mathbf{s}^2 = -e$, and $k \in \ZZ_{\ge 0}$ such that $\deg \Sigma = e + 2k$. In particular, $\Sigma$ is normal. Thus $\tau$ is an isomorphism.

    Replacing $X'$ by its birational modification, we may assume that there is a surjective morphism $\varphi: X' \to \FF_e$ such that $\psi = r \circ \varphi$. Thus we obtain a fibration
	$$
	f':= p \circ \varphi: X' \to \FF_e \to \PP^1
	$$
	with a general fibre $F' = \varphi^*\mathbf{l}$. Let $\zeta: X' \dashrightarrow X_1$ be the contraction of $X'$ onto its relative minimal model $X_1$ over $\PP^1$. Up to a birational modification, we may assume that $\zeta$ is a morphism. Then we obtain a relatively minimal fibration
	$$
	f_1: X_1 \to \PP^1
	$$
	with a general fibre $F_1$. Here $\mu:= \zeta|_{F'}: F' \to F_1$ is just the contraction onto the minimal model of $F'$.

    \textbf{Step 4}. In this step, we prove that $F_1$ is a $(1, 2)$-surface.
	
	By \textbf{Step 1} and the assumption that $p_g(X) \ge 5$, we deduce that $e+k \ge \frac{1}{2} \deg \Sigma = \frac{1}{2} p_g(X) - 1 \ge \frac{3}{2}$, i.e., $e + k \ge 2$. Also recall that $M = \varphi^*(\mathbf{s} + (e + k) \mathbf{l})$. Thus $\pi^*K_X - 2F' \ge 0$. By \cite[Corollary 2.3]{CCJ}, $\frac{3}{2}(\pi^*K_X)|_{F'} - \mu^*K_{F_1}$ is $\QQ$-effective.  On the other hand, by the assumption and \textbf{Step 1}, we always have $K_X^3 < \frac{4}{3}d$. Note that
	$$
    K_X^3 \ge d \left((\pi^*K_X) \cdot C \right).
	$$
	It follows  that
	$$
	\left((\mu^*K_{F_1}) \cdot C \right) \le \frac{3}{2} \left((\pi^*K_X)|_{F'} \cdot C \right) <  2.
	$$
	By \cite[Lemma 2.4]{CC}, we conclude that $F_1$ is a $(1,2)$-surface.
	
	\textbf{Step 5}. In this step, we show that $X_1$ is minimal. By \cite[Lemma 3.2, (2) $\Leftrightarrow$ (3)]{CCJ}, it suffices to show that
	$$
	(\pi^*K_X)|_{F'} = (\zeta^*K_{X_1})|_{F'} = \mu^*K_{F_1}.
	$$
	The second equality holds by the adjunction. Thus it reduces to show that
	$$
	(\pi^*K_X)|_{F'} = \mu^*K_{F_1}.
	$$
    By considering the Zariski decomposition of $K_{F'}$, we deduce that $\mu^*(K_{F_1})-\pi^*K_X|_{F'}$ is an effective $\QQ$-divisor. Thus we have
    $$
    1 = K_{F_1}^2 \geq \left(\mu^*K_{F_1} \cdot (\pi^*K_X)|_{F'}\right) \geq (\pi^*K_X|_{F'})^2.
    $$
    By \textbf{Step 1}, we have
    $$
    \left( (\pi^*K_X)|_{F'}\cdot S|_{F'} \right) = (\pi^*K_X \cdot C) \geq 1. 
    $$
    Thus all the above inequalities become equalities. By the Hodge index theorem, we have 
    $$
    (\pi^*K_X)|_{F'} = \mu^*K_{F_1}.
    $$
    Thus the proof is completed.
\end{proof}

\begin{thm} \label{thm: existence of (1,2)-surface fibration}
   Let $X$ be a minimal threefold of general type with $p_g(X)\geq 5$ and on the refined Noether line. Then the canonical image $\Sigma$ of $X$ is a surface. Moreover, there exists a minimal threefold $X_1$ birational to $X$ such that $X_1$ admits a fibration $f: X_1 \to \PP^1$ with general fibre $F_1$ a $(1,2)$-surface. 
\end{thm}

\begin{proof}
    First, by \cite[Theorem 2.4]{Kobayashi}, we have $\dim \Sigma \leq 2$. If $p_g(X) \geq 11$, then $\dim \Sigma = 2$ by \cite[Proposition 4.6]{HZ}. If $5 \leq p_g(X) \leq 10$, then $\dim \Sigma=2$ by \cite[Theorem 4.4 and Theorem 4.5]{CCJ} and \cite[Theorem 4.6]{CHJ}. Therefore, $\Sigma$ is a surface.

    The existence of the fibration structure is guaranteed by \cite[Proposition 2.1]{HZ} when $p_g(X) \ge 7$, and by Lemma \ref{lem: p_g5} when $p_g(X) = 5, 6$. The proof is completed.    
\end{proof}

\section{Singularities on simple fibrations in \texorpdfstring{$(1, 2)$}{(1,2)}-surfaces}\label{appendix: sing}

In this appendix, we classify the singularities of simple fibrations in $(1,2)$-surfaces, by proving a more detailed version of Proposition \ref{prop: sing-X}. This is both a refinement and a generalization of \cite[Proposition 1.6]{CP}, which only treats the case when $N = 0$. We adopt the same notation as in \S \ref{section: toric 4fold}.
\begin{prop} \label{prop: sing-X-appendix} 
	Suppose that $d \ge 0$. Then $X(d, N; d_0)$ exists if and only if
	$$
	\frac14 (d + N) \le d_0 \le \frac12 (3d+N).
	$$
	A general $X(d, N; d_0)$ has $N \times \frac12 (1, 1, 1)$ singularities at isolated points on $\s_2$ and possibly has canonical singularities along $\s_0$. More precisely, 
	\begin{enumerate}
		\item $X(d, N; d_0)$ is quasi-smooth if and only if $d + \frac38 N \le d_0 \le 
		\frac12 (3d+N)$ or $d_0 = \frac78d + \frac38 N$;
		\item $X(d, N; d_0)$ has $8d_0-7d-3N$ terminal singularities (counted with multiplicities) if and only if $\frac78 d + \frac38N \le d_0 < d + \frac38 N$;
		\item $X(d, N; d_0)$ has canonical singularities along $\s_0$, at the general point of $\s_0$ of the type 
		\begin{itemize}
			\item $cA_1$ if and only if $\frac56d + \frac13 N \le d_0 < \frac78d + \frac38 N$;
            
            \item $cA_2$ if and only if one of the following holds:
            \begin{enumerate}
                \item $\frac{1}{4}N\leq d<\frac{1}{2}N$ and $d+\frac{1}{4}N\leq d_0<\frac{5}{6}d+\frac{1}{3}N$;
                \item $d<\frac{1}{4}N$ and $\frac{2}{3}d+\frac{1}{3}N\leq d_0<\frac{5}{6}d+\frac{1}{3}N$;
            \end{enumerate}
            
			\item $cA_3$ if and only if one of the following holds:
            \begin{enumerate}
                \item $d \geq N$ and $\frac34d + \frac14 N \le d_0 < \frac56d + \frac13 N$;
                \item $\frac{1}{2}N\leq d<N$ and $\frac{2}{3}d+\frac{1}{3}N\leq d<\frac{5}{6}d+\frac{1}{3}N$;
                \item $\frac{1}{4}N\leq d<\frac{1}{2}N$ and $\frac{2}{3}d+\frac{1}{3}N\leq d_0<d+\frac{1}{4}N$;
            \end{enumerate}
            
			\item $cA_4$ if and only if $d \geq N$ and $\frac23d + \frac13 N \le d_0 < \frac34d + \frac14 N$;
            
            \item $cD_4$ if and only if $d<N$ and $\frac{3}{4}d+\frac{1}{4}N\leq d_0<\frac{2}{3}d+\frac{1}{3}N$;
            
            \item $cD_5$ if and only if one of the following holds:
            \begin{enumerate}
                \item $\frac{1}{2}N\leq d<N$ and $d\leq d_0<\frac{3}{4}d+\frac{1}{4}N$;
                \item $d<\frac{1}{2}N$ and $\frac{1}{2}d+\frac{1}{4}N\leq d_0<\frac{3}{4}d+\frac{1}{4}N$; 
            \end{enumerate}
            
			\item $cD_6$ if and only if one of the following holds:
            \begin{enumerate}
                \item $d\geq N$ and $\frac12d + \frac14 N \le d_0 < \frac23d + \frac13 N$;
                \item $\frac{1}{2}N\leq d<N$ and $\frac{1}{2}d+\frac{1}{4}N\leq d_0<d$;
            \end{enumerate}
			
            \item $cE_6$ if and only if one of the following holds:
            \begin{enumerate}
                \item $\frac{1}{3}N\leq d<\frac{1}{2}N$ and $d\leq d_0<\frac{1}{2}d+\frac{1}{4}N$;
                \item $d<\frac{1}{3}N$ and $\frac{1}{4}d+\frac{1}{4}N\leq d_0<\frac{1}{2}d+\frac{1}{4}N$;
            \end{enumerate}
            
			\item $cE_7$ if and only if one of the following holds:
            \begin{enumerate}
                \item $d\geq N$ and $\frac12d \le d_0 < \frac12d + \frac14 N$;
                \item $\frac{1}{2}N\leq d<N$ and $\frac{1}{4}d+\frac{1}{4}N\leq d_0<\frac{1}{2}d+\frac{1}{4}N$;
                \item $\frac{1}{3}N\leq d<\frac{1}{2}N$ and $\frac{1}{4}d+\frac{1}{4}N\leq d_0<d$.
            \end{enumerate}
            
			\item $cE_8$ if and only if $d\geq N$ and $\frac14d + \frac14 N \le d_0 < \frac12d$.
		\end{itemize}
	\end{enumerate}
\end{prop}

\begin{proof}
	For simplicity, we denote by $X$ a general member in $|10H - 4NF|$.
	
	We first assume that $d_0 \ge d + \frac25N$, i.e., $e \le d + \frac15 N$. Since $N \ge 0$, $d \ge 0$ and $|a_1-a_0| \le 10$, it follows from \eqref{eq: degree c} that all $c_{a_0,a_1,a_2}$ have non-negative degrees. Thus $|10H - 4NF|$ is base point free, and $X$ is quasi-smooth with $N$ singularities of type $\frac12(1,1,1)$ at isolated points of $\s_2$, corresponding to the $N$ zeros of $c_{0,0,5}$. In particular, $X$ is a regular simple fibration in $(1, 2)$-surfaces.
	
	From now on, we assume that $d_0 < d + \tfrac25N$. By \eqref{eq: degree c}, we have $\deg c_{10,0,0} < 0$ and $\deg c_{0,10,0} \ge 0$. Thus the linear system $|10H - 4NF|$ has base locus $\s_0$, and $X$ has the defining equation:
	$$
	z^2 = c_{0,0,5}y^5 + y(c_{8,0,1}x_0^8 + c_{6,0,2}x_0^6y + c_{4,0,3}x_0^4y^2+c_{2,0,4}x_0^2y^3) + x_1 (c_{9,1,0}x_0^9 + g),
	$$
	where $g = g(t_0, t_1, x_0, x_1, y)$ vanishes along $\s_0$. Now by \eqref{eq: degree c}, we have
	$$
	\deg c_{9,1,0} = N + 5d - 4e, \quad \deg c_{8,0,1} = N + 4d - 4e.
	$$
	If $d_0 \ge d + \frac38 N$, i.e., $e \le d + \frac14 N$, then both $\deg c_{9,1,0} \ge 0$ and $\deg c_{8,0,1} \ge 0$. Since $X$ is general, we may assume that $c_{9,1,0}$ and $c_{8,0,1}$ have distinct roots, so that they do not vanish simultaneously. Thus $X$ has no singularities along $\s_0$ and is therefore quasi-smooth. In particular, $X$ is a regular simple fibration in $(1, 2)$-surfaces.
	
	If $\frac78d + \frac38 N \le d_0 < d + \frac38N$, i.e., $d + \frac14 N < e \le \frac{5}{4}d + \frac14 N$, then $\deg c_{8,0,1} < 0$ and $\deg c_{9,1,0} \ge 0$. Thus $X$ has $\deg c_{9,1,0}=8d_0-7d-3N$ terminal singularities at the points of $\s_0$ where $c_{9,1,0}$ vanishes. These singularities are locally of the form
	\[z^2+y^k+tx_1=0,\]
	where the exponent $k$ is the minimum $2\le k\le 5$ for which $\deg c_{10-2k,0,k} \ge 0$. As a result, $X$ is a regular simple fibration in $(1, 2)$-surfaces. Note that if $d_0 = \tfrac78d + \tfrac38N$, then $\deg c_{9,1,0} = 0$. We may assume that $c_{9,1,0}$ is a non-zero constant. Then $X$ is quasi-smooth in this case.
	
	If $d_0 < \frac78d + \frac38N$, i.e., $e > \frac{5}{4}d + \frac14 N$, then $c_{10,0,0}$, $c_{8,0,1}$ and $c_{9,1,0}$ all have negative degrees. Now $X$ has the defining equation:
    \begin{align*} 
        z^2 & =  c_{0, 0, 5}y^5 + c_{6,0,2}x_0^6y^2+c_{2,0,4}x_0^2y^4 +c_{4,0,3}x_0^4y^3 \\
        & \qquad x_1(c_{8,2,0}x_0^8x_1 + c_{7,1,1}x_0^7y + c_{7,3,0}x_0^7x_1^2 + c_{6,2,1}x_0^6x_1y + c_{5,1,2}x_0^5y^2 + g),
    \end{align*}
	where $g$ vanishes at $\s_0$ with multiplicity at least $3$. Thus $X$ is singular along $\s_0$. Here we list the critical coefficients with their degrees according to \eqref{eq: degree c}:
	$$
	\deg c_{7, 1, 1} = N + 4d - 3e, \quad \deg c_{5,1,2} = N + 3d - 2e, \quad \deg c_{8, 2, 0} = N + 5d - 3e,
	$$
	$$
	\deg c_{6,2,1} = N + 4d - 2e, \quad \deg c_{7,3,0} = N + 5d - 2e, \quad \deg c_{3,1,3} = N + 2d - e,
	$$
    $$
    \deg c_{6,0,2} = N + 3d - 3e, \quad \deg c_{4,0,3} = N + 2d - 2e, \quad \deg c_{2, 0, 4} = N + d - e.
    $$
    
    If $\frac56d + \frac13 N \le d_0 < \frac78d + \frac38 N$, i.e., $\frac54d + \frac14 N < e \le \frac43d + \frac13N$, then the first six critical coefficients are nonzero for $X$. When $c_{6,0,2}$ is nonzero, the local analytic equation is $z^2= c_{8,2,0} x_1^2 + c_{7,1,1}x_1y+c_{6,0,2}y^2$. It is clear that $X$ has $cA_1$ singularities along $\s_0$. When $c_{6,0,2}$ has negative degree, the local analytic equation is $z^2= c_{8,2,0} x_1^2 + c_{7,1,1}x_1y$.  It is then easy to see that $X$ has $cDV$ singularities and has $cA_1$ singularity at the general point of $\s_0$. 
    
    If $d_0 < \frac{5}{6}d + \frac{1}{3}N$, i.e., $e > \frac43d + \frac13N$, then $c_{6,0,2}$ has negative degree. We divide the proof into four cases.
    
    \textbf{Case 1}: $d \ge N$. We first consider the case when $d \ge N$. Note that if $X$ is on the refined Noether line with $p_g \ge 5$, then we always have $d \ge N$. It remains to determine the type of singularities.
	\begin{itemize}
		\item[(a1)] If $\frac34d + \frac14 N \le d_0 < \frac56d + \frac13 N$, i.e., $\frac43d + \frac13N < e \le \frac32d + \frac12N$, then both $c_{7,1,1}$ and $c_{4,0,3}$ have negative degrees. The local analytic equation of $X$ along $\s_0$ is $z^2 = c_{8,2,0}x_1^2 +  c_{5,1,2}x_1y^2 $.  Thus $X$ has $cDV$ singularities and has $cA_3$ singularity at the general point of $\s_0$. 
		
		\item[(b1)] If $\frac23d + \frac13 N \le d_0 < \frac34d + \frac14 N$, i.e., $\frac32d + \frac12N < e \le \frac53d + \frac13N$, then $c_{5,1,2}$ and $c_{2,0,4}$ have negative degrees. The local analytic equation of $X$ along $\s_0$ is $z^2 = c_{8,2,0}x_1^2 + c_{0,0,5} y^5$. Thus $X$ has $cDV$ singularities and has $cA_4$ singularity at the general point of $\s_0$.
		
		\item[(c1)] If $\frac12d + \frac14 N \le d_0 < \frac23d + \frac13 N$, i.e., $\frac53d + \frac13N < e \le 2d + \frac12N$, then $c_{8,2,0}$ has negative degree. The local analytic equation of $X$ along $\s_0$ is $z^2 = c_{6,2,1}x_1^2y + c_{7,3,0}x_1^3 + c_{0,0,5}y^5$.  Thus $X$ has $cDV$ singularities and has $cD_6$ singularity at the general point of $\s_0$.
		
		
		
		
		\item[(d1)] If $\frac12d \le d_0 < \frac12d + \frac14 N $, i.e., $2d + \frac12N < e \le 2d + N$, then $c_{6,2,1}$ has negative degree. The local analytic equation of $X$ along $\s_0$ is $z^2 = c_{7,3,0}x_1^3 + c_{3, 1, 3}x_1y^3 + c_{0,0,5}y^5$. It is easy to check that $X$ has $cE_7$ singularity at the points of $\s_0$ where $c_{7,3,0}$ does not vanish. Locally at the points where $c_{7,3,0}$ vanishes, $X$ is given by the equation $z^2 = tx_1^3 + x_1y^3$. It is not $cDV$, but the relevant affine chart of the crepant blowup is given by 
        $$
        z=t^2z', \quad x_1=tx'_1, \quad y=ty'.
        $$
        The blow-up variety $X'$ is defined locally by $z'^2 = x_1^{'3} + x'_1y'^3$, which is $cDV$. Thus $X$ has at worst canonical singularities along $\s_0$.
        
		\item[(e1)] If $\frac14d + \frac14 N \le d_0 < \frac12d$, i.e., $2d + N < e \le \frac52d + \frac12N$, then $c_{3,1,3}$ has negative degree. The local analytic equation of $X$ along $\s_0$ is $z^2=c_{7,3,0}x_1^3 + c_{0,0,5}y^5$. Thus $X$  has $cE_8$ singularity at the points of $\s_0$ where both $c_{7,3,0}$ and $c_{0,0,5}$ do not vanish. At the points of $\s_0$ where $c_{7,3,0}$ vanishes, $X$ is locally given by the equation $z^2 = tx_1^3 + y^5$. It was proved in \cite[Lemma 1.14]{CP} that this singularity is canonical. At the point of $\s_0$ where $c_{0,0,5}$ vanishes, $X$ is locally given by the equation $z^2 = x_1^3 + ty^5$. We may assign weights $\mathrm{wt}(t,y,x_1,z)=(1,1,2,3)$. The corresponding weighted blow-up of $\pi: X' \to X$ is crepant and $X'$ has at worst $cDV$ singularities (This can be checked by the same method as in (d). We refer to \cite[Theorem (2.11) and Corollary (2.12)]{C3f} or \cite[\S 5.6]{KM} for details).     
	\end{itemize}
    
	\textbf{Case 2}: $\frac{1}{2}N\leq d<N$. Now we consider the case when $\frac{1}{2}N\leq d<N$. 
	\begin{itemize}
		\item[(a2)] If $\frac23d + \frac13 N \le d_0 < \frac56d + \frac13 N$, i.e., $\frac43d + \frac13N < e \le \frac53d + \frac13N$, then both $c_{7,1,1}$ and $c_{4,0,3}$ have negative degrees. Thus the singularities on $X$ are just the same as (a1).
        
		
		\item[(b2)] If $\frac34d + \frac14 N \le d_0 <\frac23d + \frac13 N $, i.e., $\frac53d + \frac13N < e \le\frac32d + \frac12N $, then $c_{8,2,0}$ has negative degree. The local analytic equation of $X$ along $\s_0$ is $z^2= c_{6,2,1}x_1^2y + c_{5,1,2}x_1y^2 + c_{7,3,0}x_1^3$. Thus $X$ has $cDV$ singularities and has $cD_4$ singularity at the general point of $\s_0$.
        
		\item[(c2)] If $d \le d_0 < \frac34d + \frac14 N$, i.e., $\frac32d + \frac12N < e \le d + N$, then $c_{5,1,2}$ has negative degree. The local analytic equation of $X$ along $\s_0$ is $z^2 = c_{6,2,1}x_1^2y + c_{7,3,0}x_1^3 + c_{2,0,4}y^4$. Thus $X$ has $cDV$ singularities and has $cD_5$ singularity at the general point of $\s_0$.
        
		\item[(d2)] If $\frac12d + \frac14 N \le d_0 < d $, i.e., $d+N < e \le 2d + \frac{1}{2}N$, then $c_{2,0,4}$ has negative degree. Thus the singularities on $X$ are the same as (c1).
        
		
		\item[(e2)] If $\frac14d+\frac{1}{4}N \le d_0 < \frac12d + \frac14 N$, i.e., $2d + \frac{1}{2}N < e \le 2d + N$, then $c_{6,2,1}$ has negative degree. The local analytic equation of $X$ along $\s_0$ is $z^2=c_{7,3,0}x_1^3 +c_{3,1,3}x_1y^3+c_{0,0,5}y^5$. Thus $X$  has  $cE_7$ singularity at the general point of $\s_0$, and $X$ has $cDV$ singularity at the point where $c_{7,3,0}$ does not vanish. At the point of $\s_0$ where $c_{7,3,0}$ vanishes, $X$ is locally given by the equation $z^2=tx_1^3+x_1y^3+y^5$. We may assign weights $\mathrm{wt}(t,y,x_1,z)=(1,1,1,2)$. The corresponding weighted blow-up of $\pi: X'\to X$ is crepant and $X'$ has at worst $cDV$ singularities. Thus $X$ has canonical singularities.
    \end{itemize}
      
    \textbf{Case 3}: $\frac{1}{4}N\leq d<\frac{N}{2}$. Now we treat the case when $\frac{1}{4}N\leq d<\frac{N}{2}$.
    \begin{itemize}
        \item [(a3)] If $d+\frac{1}{4}N\leq d_0<\frac{5}{6}d+\frac{1}{3}N$, i.e., $\frac{4}{3}d+\frac{1}{3}N<e\leq d+\frac{1}{2}N$, then $c_{7,1,1}$ has negative degree. The local analytic equation of $X$ along $\s_0$ is $z^2 = c_{8,2,0}x_1^2 +c_{4,0,3}y^3+ c_{5,1,2}x_1y^2$. Thus $X$ has $cDV$ singularities and has $cA_2$ singularity at the general point of $\s_0$. 

        \item [(b3)] If $\frac{2}{3}d+\frac{1}{3}N\leq d_0<d+\frac{1}{4}N$, i.e., $d+\frac{1}{2}N<e\leq \frac{5}{3}d+\frac{1}{3}N$, then $c_{4,0,3}$ has negative degree. Thus the singularities on $X$ are the same as (a1).
        

        \item [(c3)] If $\frac{3}{4}d+\frac{1}{4}N\leq d_0<\frac{2}{3}d+\frac{1}{3}N$, i.e., $\frac53d + \frac13N < e \le\frac32d + \frac12N$, then $c_{8,2,0}$ has negative degree. Thus the singularities on $X$ are the same as (b2). 

        \item [(d3)] If $\frac{1}{2}d+\frac{1}{4}N\leq d_0<\frac{3}{4}d+\frac{1}{4}N$, i.e., $\frac{3}{2}d+\frac{1}{2}N<e\leq 2d+\frac{1}{2}N$, then $c_{5,1,2}$ has negative degree. Thus the singularities on $X$ are the same as (c1). 
    \end{itemize}
    
    \textbf{Subcase 3.1}. $\frac{1}{3}N\leq d<\frac{1}{2}N$.

    \begin{itemize}
        \item [(e3)] If $d\leq d_0<\frac{1}{2}d+\frac{1}{4}N$, i.e., $2d+\frac{1}{2}N<e\leq d+N$, then $c_{6,2,1}$ has negative degree. The local analytic equation of $X$ along $\s_0$ is $z^2=c_{7,3,0}x_1^3 + c_{3,1,3}x_1y^3 + c_{2,0,4}y^4$. Thus $X$ has $cE_6$ singularity at the general point of $\s_0$, and $X$ has $cDV$ singularity at the point where $c_{7,3,0}$ does not vanish. At the point of $\s_0$ where $c_{7,3,0}$ vanishes, $X$ is locally given by the equation $z^2=tx_1^3+x_1y^3+y^4$. We may assign weights $\mathrm{wt}(t,y,x_1,z)=(1,1,1,2)$. The corresponding weighted blow-up of $\pi: X'\to X$ is crepant and $X'$ has at worst $cDV$ singularities. Thus $X$ has canonical singularities.

        \item [(f3)] If $\frac{1}{4}d+\frac{1}{4}N\leq d_0<d$, i.e., $d+N<e\leq \frac{5}{2}d+\frac{1}{2}N$, then $c_{2,0,4}$ has negative degree. 
        Thus the singularities on $X$ are the same as (e2). 
    \end{itemize}

    \textbf{Subcase 3.2}. $\frac{1}{4}N \leq d < \frac{1}{3}N$.

    \begin{itemize}
          \item[(e3')] If $\frac{1}{4}d+\frac{1}{4}N\leq d_0<\frac{1}{2}d+\frac{1}{4}N$, i.e., $2d+\frac{1}{2}N<e\leq \frac{5}{2}d+\frac{1}{2}N$, then $c_{6,2,1}$ has negative degree. 
          The singularities on $X$ are the same as (e3). 
    \end{itemize}
    
    \textbf{Case 4}. $d < \frac{1}{4}N$. We now treat the case when $d < \frac{1}{4}N$.

    \begin{itemize}
        \item [(a4)] If $\frac{2}{3}d+\frac{1}{3}N\leq d_0<\frac{5}{6}d+\frac{1}{3}N$, i.e., $\frac{4}{3}d + \frac{1}{3}N <e\leq \frac{5}{3}d+\frac{1}{3}N$, then $c_{7,1,1}$ has negative degree. The singularities on $X$ are the same as (a3).

        \item [(b4)] If $\frac{3}{4}d+\frac{1}{4}N\leq d_0<\frac{2}{3}d+\frac{1}{3}N$, i.e., $\frac{5}{3}d+\frac{1}{3}N<e\leq d+\frac{1}{2}N$, then both $c_{8,2,0}$ and $c_{4,0,3}$ has negative degree. The singularities on $X$ are the same as (b2).

        \item [(c4)] If $\frac{1}{2}d+\frac{1}{4}N\leq d_0<\frac{3}{4}d+\frac{1}{4}N$, i.e., $\frac{3}{2}d+\frac{1}{2}N<e\leq 2d+\frac{1}{2}N$, then $c_{5,1,2}$ has negative degree. The singularities on $X$ are the same as (c2).

        \item[(d4)] If $\frac{1}{4}d+\frac{1}{4}N\leq d_0<\frac{1}{2}d+\frac{1}{4}N$, i.e., $2d+\frac{1}{2}N<e\leq \frac{5}{2}d+\frac{1}{2}N$, then $c_{6,2,1}$ has negative degree. 
        The singularities on $X$ are the same as (e3). 
    \end{itemize}
	In each case, $X$ is a regular simple fibration in $(1, 2)$-surfaces. 
    
    Finally, we prove that $X$ is a regular simple fibration in $(1, 2)$-surfaces only when $d_0 \ge \frac{1}{4}(d + N)$. The proof is very similar to that of \cite[Proposition 1.6]{CP}, and we just sketch it here. Let $\mathbf{x} = x_1/x_0$, $\mathbf{y} = y/x_0^2$, $\mathbf{z} = z/x_0^5$ denote local fibre coordinates near $X_t\cap \s_0$ for a general fibre $X_t$ of the fibration $X \to \PP^1$. Using a lemma of Reid \cite[\S 4.6 and \S 4.9]{YPG}, if $X$ has at worst canonical singularities, the equation of $X$ must have monomials of weight $<1$ with respect to each of the weights $\frac12 (1,1,0)$, $\frac13 (1,1,1)$, $\frac14(2,1,1)$ and $\frac16(3,2,1)$.
	With coordinates $(\mathbf{x}, \mathbf{y}, \mathbf{z})$ and weights $\frac14(1,1,2)$, we see that there are $a_1$ and $a_2$ with $a_1 + a_2 < 4$ such that $\deg c_{a_0, a_1, a_2} \ge 0$. Since $a_1 + a_2 < 4$ is equivalent to $a_0 - a_1 \ge 4$, combining this with the fact that $a_0 + a_1 \le 10$, it follows from \eqref{eq: degree c} that 
    $$
	N + 5d - 4e \ge \deg c_{a_0,a_1,a_2} \ge 0,
	$$
	which is equivalent to $d_0 \ge \frac{1}{4}(d + N)$. The proof is completed.
\end{proof}

\bibliography{Ref_Noetherline}
\bibliographystyle{amsalpha}

\end{document}